\newtheorem{theorem}{Theorem}[section]
\newtheorem{lemma}[theorem]{Lemma}
\newtheorem{definition}[theorem]{Definition}
\newtheorem{Lemma}[theorem]{Lemma}
\newtheorem{corollary}[theorem]{Corollary}
\newtheorem{Prop}[theorem]{Proposition}
\theoremstyle{definition}
\newtheorem{remark}[theorem]{Remark}
\newtheorem{example}[theorem]{Example}
\numberwithin{equation}{section}
\newcommand{\Cal}[1]{{\mathcal #1}}
\newcommand{\Max}{\operatorname{Max}}
\newcommand{\Spec}{\operatorname{Spec}}
\DeclareMathOperator{\Over}{Over}
\DeclareMathOperator{\Zar}{Zar}
\DeclareMathOperator{\SStar}{SStar}
\newcommand{\cmat}{\left(\begin{array}}
	\newcommand{\fmat}{\end{array}\right)}
\newcommand{\f}{\mathfrak}
\newcommand{\chius}{\mathrm{Cl}}
\newcommand{\cons}{\mathrm{cons}}
\newcommand{\inverse}{\mathrm{inv}}
\newcommand{\xcal}{{\boldsymbol{\mathcal{X}}}}
\newcommand{\overic}{I}
\newcommand{\pruf}{\mathrm{Pruf}}
\newcommand{\prufsloc}{\pruf_{\mathrm{sloc}}}
\newcommand{\insid}{\mathcal{I}}
\newcommand{\primary}{\mathcal{P}}
\newcommand{\inssemistar}{\mathrm{SStar}}
\newcommand{\inssemistartf}{\mathrm{SStar}_f}
\newcommand{\inssubmod}{\mathbf{F}}
\newcommand{\inssubfg}{\mathbf{f}}
\newcommand{\B}{\mathcal{B}}
\newcommand{\insclos}{\mathrm{Clos}}
\newcommand{\rad}{\mathrm{rad}}
\newcommand{\insprinc}{\mathrm{Princ}}
\newcommand{\fin}{\mathrm{fin}}
\newcommand{\zariski}{\mathrm{zar}}
\begin{document}
\title{Suprema in spectral spaces and the constructible closure}
\author{Carmelo Antonio Finocchiaro}
\address{Dipartimento di Matematica e Informatica, Universit\`a degli Studi di Catania, Viale Andrea Doria 6 - 95125, Catania}
\email{cafinocchiaro@unict.it}
\author{Dario Spirito}
\address{Dipartimento di Matematica e Fisica, Universit\`a degli Studi Roma Tre, Largo San Leonardo Murialdo 1, 00146, Roma}
\email{spirito@mat.uniroma3.it}
\keywords{Spectral spaces; constructible topology; specialization order; overrings; semistar operations}
\subjclass[2010]{13F05, 13G05, 13C99, 54F05}
\begin{abstract}
Given an arbitrary spectral space $X$, we endow it with its specialization order $\leq$ and we study the interplay between suprema  of subsets of $(X,\leq)$ and the constructible topology. More precisely, we investigate about when the supremum of a set $Y\subseteq X$  exists and belongs to the constructible closure of $Y$. We apply such results to algebraic lattices of sets and to closure operations on them, proving density properties of some distinguished spaces of rings and ideals. Furthermore, we provide topological characterizations of some class of domains in terms of topological properties of their ideals. 
\end{abstract}
\maketitle
\section{Introduction}
A topological space $X$ is said to be a \emph{spectral space} if it is homeomorphic to the spectrum of a (commutative, unitary) ring, endowed with the Zariski topology; as shown by Hochster \cite{ho}, being a spectral space is a topological condition, in the sense that it is possible to define spectral spaces exclusively through topological properties, without mentioning any algebraic notion. His proof relies heavily on the passage from the starting topology to a new topology, the \emph{patch} or \emph{constructible} topology (see Section \ref{sect:construct}), which remains spectral but becomes Hausdorff; this topology has recently been interpreted as the topology of ultrafilter limit points with respect to the open and quasi-compact subsets of the original topology (see \cite{Fi} and \cite{Fo-Lo-2008}). Spectral spaces are related to several other topics, for example Boolean algebras, distributive lattices and domain theory, all of which provide a different context and a different point of view on the underlying topological structure. The monograph \cite{di-sc-tr} provides a recent and in-depth presentation of this subject and of its connections with other areas of mathematics. 

The spectrum of a ring carries a natural partial order, the one induced by set inclusion: such order can also be recovered topologically, as it coincides with the specialization order of the Zariski topology. In this paper, we exploit the interplay between this order and the constructible topology to analyze several different spaces of algebraic interest (arising as sets of modules or of rings), in particular determining several examples of subspaces that are dense or closed in the constructible topology. Many of our results are based on Theorem \ref{supremum}, a simple observation  that is consequence of  \cite[Theorem 4.2.6]{di-sc-tr}: if $X$ is a spectral space and $Y\subseteq X$ is closed by finite suprema, then all subsets of $Y$ have a supremum, which belongs to the constructible closure of $Y$. (An analogous result holds for infima, as can be seen through the use of the inverse topology.) In Section \ref{sect:sup}, we study some variants of this result, introducing a condition on spectral spaces (being \emph{locally with maximum}, Definition \ref{defin:locwithmax}) which allows a greater control on the constructible closure, and connecting it with some order topologies induced the specialization order. 

In Section \ref{sect:alglatt}, we bridge the gap toward the algebraic setting by connecting the concept of \emph{algebraic lattice of sets} inside a set $S$ \cite[7.2.12]{di-sc-tr}, which can be used to model several spaces of substructures, with the notion of finite-type closure operation, and show that the set of these closures can be made into a spectral space, generalizing \cite[Theorem 2.13]{FiSp}.

In the last two sections, we apply these results to spaces of submodules, of overrings and of semistar operations, which provide several natural examples of spectral spaces when endowed with the Zariski or the hull-kernel topology. While known results are usually positive, i.e., they concentrate on spaces which are spectral and/or closed in the constructible topology (see for example \cite[Example 2.2]{olberding-irred}), our method allows to find example of subspaces that are dense with respect to the constructible topology and thus, in particular, are not closed. In Section \ref{sect:submodules}, we concentrate on spaces of ideals and modules: in particular, we analyze finitely generated modules (Proposition \ref{prop:fg-dense}), primary ideals of Noetherian rings (Propositions \ref{prop:primary-noeth} and \ref{prop:primary0}) and valuation domains (Proposition \ref{prop:primary-val}) and principal ideals in Noetherian rings (Proposition \ref{prop:princ-noeth-PIR}) and Krull domains (Theorem \ref{thm:Krull}). In Section \ref{sect:overrings} we study overrings: we show that the set of Noetherian valuation overrings of a Noetherian domain is always dense in the Zariski space (Theorem \ref{DVR-dense}) and that the set of Pr\"ufer overrings of an integral domain $D$ is dense in the set of integrally closed overrings of $D$ (Proposition \ref{semilocal-prufer-dense}). At the end of the paper, we analyze the relationship between the space of overrings and the space of finite-type semistar operations on a domain (Proposition \ref{prop:controes-semistar}).

\section{Preliminaries}
Let $X$ be a topological space. Then, $X$ is said to be a \emph{spectral space} if the following conditions hold:
\begin{itemize}
    \item $X$ is quasi-compact and $T_0$;
    \item $X$ has a basis of quasi-compact sets that is closed by finite intersections;
    \item every closed irreducible subset of $X$ has a generic point (i.e., $X$ is \emph{sober}).
\end{itemize}
By \cite[Theorem 6]{ho}, $X$ is a spectral space if and only if there is a (commutative, unitary) ring $A$ such that $X\simeq\Spec(A)$, where $\Spec(A)$ (the \emph{spectrum} of $A$) is endowed with the Zariski topology.

\subsection{The constructible topology.}\label{sect:construct} Let $X$ be a spectral space. The \emph{constructible} (or \emph{patch}) \emph{topology} on $X$ is the coarsest topology for which all open and quasi-compact subspaces of $X$ are clopen sets. In the following we will denote by $X^\cons$ the space $X$, with the constructible topology, and, for every $Y\subseteq X$, by $\chius^\cons(Y)$ the closure of $Y$ in $X^\cons$. In light of \cite[Theorem 1 and Proposition 4]{ho}, $X^\cons$ is quasi-compact, Hausdorff and totally disconnected, and thus, a fortiori, a zero-dimensional spectral space.

A mapping $f:X\to Y$ of spectral spaces is called a \emph{spectral map} if, for every open and quasi-compact subspace $V$ of $Y$, $f^{-1}(V)$ is open and quasi-compact. In particular, any spectral map is continuous; moreover, if $X$ and $Y$ are endowed with the constructible topology, $f$ becomes continuous and closed (see \cite[pag. 45]{ho}). 

%Let $x\in X$, let $\emptyset\neq Y\subseteq X$ and let $\ms U$ be an ultrafilter on $Y$.  We say that $x$ is \emph{an ultrafilter limit point of $Y$ with respect to $\ms U$} if, for every open and quasi quasi-compact subset $S$ of $X$, we have $x\in S$ if and only if $S\cap Y\in\ms U$. 

%\begin{example}
%When $X:=\Spec(A)$, for some ring $A$, and $Y\subseteq \Spec(A)$, it is easily seen that the ultrafilter limit point of $Y$ with respect to an ultrafilter $\ms U$ on $Y$ is the prime ideal 
%$$
%Y_{\ms U}:=\{a\in A\mid V(a)\cap Y\in\ms U \};
%$$
%see, for instance, \cite[Proposition 2.3(1)]{fifolo2}.
%\end{example}

%By \cite[Proposition 2.13 and Proposition 3.2]{Fi}, the closure of $Y$ in $X^\cons$ is exactly the set of all ultrafilter limit points of $Y$ with respect to some ultrafilter $\ms U$ on $Y$.

A \emph{proconstructible} subset of $X$ is a set which is closed with respect to the constructible topology. A subset $Y$ of $X$ is said to be \emph{retrocompact in} $X$ provided that, for every open and quasi-compact subset $\Omega$ of $X$, $Y\cap \Omega$ is quasi-compact. The following well-known fact (whose proof can be found e.g. in \cite[Pag. 45]{ho} or in \cite[Theorem 2.1.3]{di-sc-tr}) provides a relation between the notions given above and will be freely used in what follows.
\begin{Prop}\label{pro-retro-spec}
Let $X$ be a spectral space and let $Y\subseteq X$. The following conditions are equivalent. 
\begin{enumerate}[\rm (i)]
\item $Y$ is proconstructible.
\item $Y$ is retrocompact and spectral (with the subspace topology of $X$).
\end{enumerate}
Furthermore, if $Y$ is proconstructible (and thus, in particular, spectral), then the subspace topology on $Y$ induced by the constructible topology of $X$ is the constructible topology of $Y$. 
\end{Prop}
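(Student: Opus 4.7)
The plan is to prove the two implications in turn, then deduce the final topology identification by comparing bases, using throughout the key fact that every open quasi-compact $\Omega\subseteq X$ is clopen in $X^\cons$.

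For $(i)\Rightarrow(ii)$, I would first dispatch retrocompactness: if $\Omega\subseteq X$ is open quasi-compact then $\Omega$ is clopen in $X^\cons$, so $Y\cap\Omega$ is closed in the compact Hausdorff space $X^\cons$, hence patch-quasi-compact, and thus quasi-compact in $X$ since the identity $X^\cons\to X$ is continuous. To verify that $Y$ (with the subspace topology of $X$) is spectral, the $T_0$ axiom is inherited, quasi-compactness is inherited from $X^\cons$, and the sets $Y\cap\Omega$ (for $\Omega$ ranging over a basis of open quasi-compact subsets of $X$ closed under finite intersection) give such a basis for $Y$ by retrocompactness. The more delicate point is sobriety: given a closed irreducible $C\subseteq Y$, I would show that $\chius^X(C)$ remains irreducible in $X$, producing a generic point $\eta\in X$, and then check that $\eta\in\chius^\cons(C)$ by verifying that every patch-basic neighborhood $\Omega_1\cap\cdots\cap\Omega_m\cap(X\setminus\Omega_{m+1})\cap\cdots\cap(X\setminus\Omega_n)$ of $\eta$ meets $C$: the conditions $\eta\notin\Omega_j$ force $\Omega_j\cap\chius^X(C)=\emptyset$ via the generic-point property, and the remaining conditions reduce to the density of $C$ in $\chius^X(C)$. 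Since $Y=\chius^\cons(Y)$, this gives $\eta\in Y$, and then $\eta$ is the sought generic point of $C$ in the subspace topology.

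For $(ii)\Rightarrow(i)$, I would exhibit the inclusion $\iota:Y\hookrightarrow X$ as a spectral map: it is continuous, and for any open quasi-compact $\Omega\subseteq X$ the preimage $\iota^{-1}(\Omega)=Y\cap\Omega$ is open in $Y$ and quasi-compact by retrocompactness. Invoking the recalled fact that spectral maps between spectral spaces are closed when both sides carry the constructible topology, $Y=\iota(Y)$ is closed in $X^\cons$, as desired.

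For the \emph{furthermore} statement, let $\tau$ denote the subspace topology induced on $Y$ from $X^\cons$ and $\tau'$ the constructible topology of $Y$ viewed as a spectral space in its own right. By retrocompactness each $Y\cap\Omega$ (with $\Omega\subseteq X$ open quasi-compact) is open quasi-compact in $Y$, so the subbasis defining $\tau$ consists of $\tau'$-clopen sets, yielding $\tau\subseteq\tau'$. Conversely, every open quasi-compact $V\subseteq Y$ has the form $V=Y\cap W$ for some $W$ open in $X$; writing $W$ as a union of sets from a basis of open quasi-compacts of $X$ and using quasi-compactness of $V$ in $Y$ to reduce to a finite subunion, one lands on $V=Y\cap\Omega$ for some open quasi-compact $\Omega\subseteq X$, whence $\tau'\subseteq\tau$. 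The main obstacle I anticipate is the sobriety step in $(i)\Rightarrow(ii)$: it is the only place requiring more than a direct manipulation of clopen patch sets, and it is really where the hypothesis of patch-closedness of $Y$ is used in a nontrivial way.
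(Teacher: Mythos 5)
Your proof is correct. Note that the paper itself does not prove this proposition but simply cites Hochster \cite[p.~45]{ho} and Dickmann--Schwartz--Tressl \cite[Theorem~2.1.3]{di-sc-tr} for it, so there is no in-paper argument to compare against; your self-contained treatment follows the standard pattern one finds in those references (retrocompactness via patch-compactness of $Y\cap\Omega$, sobriety via lifting a generic point of $\chius^X(C)$ and showing it lies in $\chius^\cons(Y)=Y$, the converse via closedness of spectral maps in the patch topology, and the topology comparison via bases), and all the steps check out — in particular the sobriety step, which you rightly flag as the only place where patch-closedness of $Y$ enters in an essential way, is carried out correctly.
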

%\begin{proof}
%If $Y$ is proconstructible, the fact that $Y$ is retrocompact and spectral follows from the fact that the constructible topology is Hausdorff. Conversely, if $Y$ is retrocompact and spectral, then the inclusion map $Y\hookrightarrow X$ is a spectral map, and so $Y$ is proconstructible by either \cite[p. 45]{ho} or \cite[1.9.5(vii)]{EGA4-1}.
%\end{proof}

\subsection{The order induced by a spectral topology.} Let$(P,\preceq)$ be a partially ordered set $(P,\preceq)$. For every subset $Q$ of $P$, set 
$$
Q^\uparrow:=\{p\in P\mid q\preceq p, \mbox{ for some }q\in Q\} 
$$
and
$$
Q^\downarrow:=\{p\in P\mid p\preceq q,\mbox{ for some }q\in Q\}.
$$
We say that $Q$ is \emph{closed under specializations} (resp., \emph{closed under generizations}) if $Q=Q^\uparrow$ (resp., $Q=Q^\downarrow$). 

Now, let $X$ be any topological space. A natural preorder can be defined on $X$ by setting, for every $x,y\in X$, $x\leq y:\iff y\in \chius(\{x\})$. In particular, if $\Omega$ is an open neighborhood of $y$ and $x\leq y$, then $x\in \Omega$.  Since every spectral space is, in particular, a T$_0$ space, the canonical preorder induced by a spectral topology is in fact a partial order, called \emph{the specialization order}. \begin{comment}
For every subset $Y$ of $X$, define the \emph{closure under generization of $Y$} to be the set
$$
Y^\downarrow:=\{x\in X\mid x\leq y, \mbox{ for some } y\in Y \}. 
$$
Clearly $Y\subseteq Y^\downarrow$, and when $Y=Y^\downarrow$ then $Y$ is said to be \emph{closed under generizations}.
\end{comment} 
Thus, if $X$ is a spectral space and $\leq $ is its specalization order, we can consider the partially ordered set $(X,\leq)$. It is well-known that, if $Y\subseteq X$ is quasi-compact, then $Y^\downarrow$ is proconstructible in $X$ (see, for instance,  \cite[Proposition 2.6]{fifolo2}). 

If $X$ is a spectral space (more generally, if $X$ is quasi-compact and $T_0$) then for every $x\in  X$ there is a maximal point $y$ such that $x\leq y$; in particular, $X=(\Max(X))^\downarrow$, where $\Max(X)$ is the set of maximal points of $X$ (see \cite[Remark 2.2(vi)]{Sch-Tre} or \cite[Proposition 4.1.2]{di-sc-tr}). Using the inverse topology (see Section \ref{sect:inverse}) we see that, dually, for every $x\in X$ there is a minimal point $z$ such that $z\leq x$.

We will also need some topologies defined on partially ordered sets. For a deeper insight in this circle of ideas see \cite[Section 7.1 and Appendix A.8]{di-sc-tr}. 

Let $(X,\leq)$ be a partially ordered set. The \emph{coarse lower topology} on $X$ associated to  $\leq$, denoted by $\tau^\ell(X)$ or $\tau^\ell(\leq)$, is the topology for which the sets of the type $\{x \}^\uparrow$, for $x$ varying in $X$, form a subbasis for the closed sets. The order induced by $\tau^\ell(X)$ is again $\leq$, and $\tau^\ell(X)$ is the coarsest topology inducing $\leq$. In general, the coarse lower topology is not spectral.

Suppose that $(X,\leq)$ is a \emph{direct complete partial order} (in brief, \emph{dcpo}), that is, suppose that every up-directed subset has a supremum. The \emph{Scott topology} on $X$ associated to $\leq$, denoted by $\sigma(X)$, is the topology whose open sets are the subsets $U$ of $X$ satisfying the following conditions:
\begin{enumerate}
\item $U$ is an up-set for $\leq$;
\item whenever $D$ is an up-directed subset of $X$ and $\sup(D)\in U$, then $U\cap D\neq \emptyset$. 
\end{enumerate}
The order induced by the Scott topology is the \emph{opposite} of $\leq$. It is possible to characterize when the Scott topology is spectral \cite[Theorem 7.1.21]{di-sc-tr}.

\begin{comment}
We recall here a well-known fact that will be freely used in the remaining part of the paper. 
\begin{Lemma}{\cite[Remark 2.2(vi)]{Sch-Tre}}\label{maximal-points}
Let $X$ be a quasi-compact $T_0$ space and let $\leq$ be the order
induced by the topology. Then for every $x\in  X$ there is a maximal point
$x_0$ such that $x\leq x_0$. In particular, X has maximal points.
\end{Lemma}
In case $X$ is a spectral space and $x\in X$, then the previous lemma (applied to the given spectral topology and to its inverse topology) implies that there are a maximal point $y$ and a minimal point $z$ of $X$ such that $z\leq x\leq y$.% In particular, if $X=\Spec(A)$ for some ring $A$, the discussion above means that a given prime ideal  of $A$ is contained in some maximal ideal of $A$ and contains some minimal prime ideal of $A$. 
\end{comment}

\begin{comment}

there is an $x'\geq x$ that is maximal with respect to the order induced by the topology; this is equivalent to saying that every (prime) ideal of a ring is contained in a maximal ideal. An analogous algebraic argument shows that there is an $x''\leq x$ that is minimal with respect to the order induced by the topology (see e.g. \cite[Theorem 10]{kaplansky}).
\end{comment}
\begin{comment}
\begin{lemma}\label{maximal-points}
Let $X$ be a quasi-compact T$_0$ space and let $\leq $ be the order induced by the topology. Then for every $x\in X$ there is a maximal point $x'$ such that $x\leq x'$. In particular, $X$ has maximal points. 
\end{lemma}
\begin{proof}
Let $x\in X$ and let $\Sigma:=\{y\in X\mid x\leq y \}$. Then $(\Sigma,\leq)$ is a nonempty partially ordered set and, since $X$ is quasi-compact, every chain in $\Sigma$ has an upper bound. The conclusion follows from Zorn's Lemma. 
\end{proof}
\end{comment}

\subsection{The inverse topology.}\label{sect:inverse}
Let $X$ be a spectral space. Following \cite[Proposition 8]{ho} and \cite[Section 1.4]{di-sc-tr}, the \emph{inverse topology} on $X$ is the topological space $X^\inverse$ on the same base set of $X$, whose closed sets are the intersections of the open and quasi-compact subspaces of $X$. The inverse topology is spectral, and the order it induces is exactly the reverse of the order induced by the original spectral topology. If $Y\subseteq X$, we denote by $\chius^\inverse(Y)$ the closure of $Y$ in $X^\inverse$. By definition, for every $x\in X$, we have
$$
\chius^\inverse(\{x\})=\{y\in X \mid y\leq x \}. 
$$
The constructible topology of $X^\inverse$ coincides with the constructible topology of the starting topology.

Let now
$$
\xcal(X):=\{H\subseteq X \mid H\neq \emptyset, H \mbox{ is closed in  } X^\inverse\}. $$
As in \cite{fi-fo-spi-vietoris}, we endow $\xcal(X)$ with the so-called \emph{upper Vietoris topology}, i.e., the topology for which a basis of open sets is given by the sets
$$
\mathcal U(\Omega):=\{H\in \xcal(X)\mid H\subseteq \Omega \},
$$
where $\Omega$ runs among the open and quasi-compact subspaces of $X$. In \cite[Theorem 3.4]{fi-fo-spi-vietoris} it is proven that $\xcal(X)$ is a spectral space and that the canonical map $X\to \xcal(X)$, $x\mapsto \chius^\inverse(\{x\})=\{x\}^\downarrow$ is a spectral map and a topological embedding.

\section{Suprema of subsets and the constructible closure}\label{sect:sup}
If $X$ is a spectral space and $Y$ is a nonempty subset of $X$, we shall denote by $\sup(Y)$ the supremum of $Y$ (if it exists) in $X$, with respect to the order induced by the spectral topology. Furthermore, we define
$$
Y_f:=\{x\in X\mid x=\sup (F), \mbox{ for some }\emptyset \neq F\subseteq Y,F \mbox{ finite} \}, 
$$
$$
Y_\infty:=\{x\in X\mid  x=\sup(Z), \mbox{ for some } \emptyset \neq Z\subseteq Y\}.
$$

We say that \emph{$Y_f$ exists} if $\sup(F)$ exists for every nonempty finite subset $F\subseteq Y$, while we say that \emph{$Y_\infty$ exists} if $\sup(Z)$ exists for every nonempty subset $Z\subseteq Y$.

In the same way, we define $Y_{(f)}$ as the set of (existing) infima of the finite subsets of $Y$, and $Y_{(\infty)}$ as the set of (existing) infima of arbitrary subsets of $Y$. Likewise, we use  \emph{$Y_{(f)}$ exists} and \emph{$Y_{(\infty)}$ exists}, respectively, if $\sup(Z)$ exists for every finite $Z\subseteq Y$ (resp., for every $Z\subseteq Y$). Note that the infimum of a $Z\subseteq X$ is exactly the supremum of $Z$ with respect to the inverse topology, and thus results about $Y_{(f)}$ and $Y_{(\infty)}$ are often ``dual'' with the ones about $Y_f$ and $Y_\infty$.

In this paper, we are mainly interested in studying the relationship between existence of suprema and infima and the constructible topology. The following criterion will be extensively used through the paper.
\begin{theorem}\label{supremum}
Let $X$ be a spectral space and let $Y\subseteq X$.
\begin{enumerate}[\rm (1)]
\item\label{supremum:sup} If $Y_f$ exists, then $Y_\infty$ exists and $Y_\infty\subseteq\chius^\cons(Y_f)$.
\item\label{supremum:inf}  If $Y_{(f)}$ exists, then $Y_{(\infty)}$ exists and $Y_{(\infty)}\subseteq\chius^\cons(Y_{(f)})$.
\end{enumerate}
\end{theorem}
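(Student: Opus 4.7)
The plan is to reduce both statements to the classical fact---essentially \cite[Theorem 4.2.6]{di-sc-tr}---that in a spectral space $X$, every up-directed subset $D$ of $(X,\leq)$ admits a supremum in $X$ and this supremum lies in $\chius^\cons(D)$. Granted this, the two assertions become routine manipulations of the operators $(\cdot)_f$ and $(\cdot)_\infty$, together with the duality provided by the inverse topology discussed in Section \ref{sect:inverse}.

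For (\ref{supremum:sup}), I would fix a nonempty $Z\subseteq Y$ and work with $Z_f$. Since $Z\subseteq Y$ and $Y_f$ exists, $Z_f$ exists as well and is contained in $Y_f$. A short check shows that $Z_f$ is up-directed: if $x=\sup F$ and $x'=\sup F'$ with $F,F'$ finite nonempty subsets of $Z$, then $\sup(F\cup F')\in Z_f$ dominates both $x$ and $x'$. Applying the classical fact to the up-directed set $Z_f$ yields that $\sup(Z_f)$ exists and $\sup(Z_f)\in \chius^\cons(Z_f)\subseteq \chius^\cons(Y_f)$. The identification $\sup(Z)=\sup(Z_f)$ is then immediate by comparing upper bounds: since $Z\subseteq Z_f$, any upper bound of $Z_f$ bounds $Z$, and since every element of $Z_f$ is a finite supremum of elements of $Z$, any upper bound of $Z$ bounds $Z_f$. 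This simultaneously establishes the existence of $\sup(Z)$ and the desired inclusion $Y_\infty\subseteq \chius^\cons(Y_f)$.

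Part (\ref{supremum:inf}) would follow from (\ref{supremum:sup}) applied to the inverse topology $X^\inverse$. The specialization order of $X^\inverse$ is the opposite of that of $X$, so infima computed in $X$ are exactly suprema computed in $X^\inverse$; equivalently, $Y_{(f)}$ (resp.\ $Y_{(\infty)}$) computed in $X$ coincides with $Y_f$ (resp.\ $Y_\infty$) computed in $X^\inverse$. Since the constructible topology of $X^\inverse$ coincides with that of $X$, applying (\ref{supremum:sup}) inside $X^\inverse$ immediately yields (\ref{supremum:inf}).

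The main obstacle I expect is the invocation of the classical fact about suprema of up-directed sets in spectral spaces; once this is in hand, the rest is order-theoretic bookkeeping. The only other point requiring some care is the symmetric behavior of the constructible closure under passage to the inverse topology, which is precisely what is recorded in Section \ref{sect:inverse}.
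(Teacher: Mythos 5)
Your proposal is correct and follows essentially the same route as the paper: reduce to a nonempty $Z\subseteq Y$, observe that $Z_f$ is up-directed and contained in $Y_f$, invoke \cite[Theorem 4.2.6]{di-sc-tr} to get $\sup(Z_f)\in\chius^\cons(Z_f)\subseteq\chius^\cons(Y_f)$, identify $\sup(Z)=\sup(Z_f)$, and obtain (\ref{supremum:inf}) by passing to the inverse topology. You merely spell out a couple of details (up-directedness of $Z_f$, the identification of suprema) that the paper leaves to the reader.
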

\begin{proof}
\ref{supremum:sup} Consider a nonempty subset $Z$ of $Y$. By assumption, $Z_f$ exists and clearly $Z_f\subseteq Y_f$. Furthermore, $Z_f$ is up-directed, with respect to the order induced by the spectral topology of $X$. Now apply \cite[Theorem 4.2.6]{di-sc-tr} to $Z_f$ to infer that $\sup(Z_f)$ exists and $\sup(Z_f)\in \chius^{\rm cons}(Z_f)\subseteq \chius^{\rm cons}(Y_f)$. The conclusion follows by noting that $\sup(Z)=\sup(Z_f)$.

\ref{supremum:inf} is the same result, but for the inverse topology (recall that $(X^\inverse)^\cons=X^\cons$).
\end{proof}

\begin{corollary}\label{infinity}
Let $X$ be a spectral space and let $Y\subseteq X$. If $Y_f$ exists, then $Y_f$ and $Y_\infty$ have the same closure, with respect to the constructible topology. 
\end{corollary}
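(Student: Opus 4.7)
The plan is to extract the corollary from Theorem \ref{supremum}(\ref{supremum:sup}) by showing a two-sided inclusion of the constructible closures. Since $Y_f$ is assumed to exist, Theorem \ref{supremum}(\ref{supremum:sup}) already gives us that $Y_\infty$ exists as well, so the statement is well-posed.

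For the inclusion $\chius^\cons(Y_f)\subseteq \chius^\cons(Y_\infty)$, I would simply note that every nonempty finite subset of $Y$ is in particular a nonempty subset of $Y$, so $Y_f\subseteq Y_\infty$, and monotonicity of the constructible closure yields the claim immediately.

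For the reverse inclusion $\chius^\cons(Y_\infty)\subseteq \chius^\cons(Y_f)$, I would invoke Theorem \ref{supremum}(\ref{supremum:sup}) directly: it asserts $Y_\infty\subseteq\chius^\cons(Y_f)$. Taking the constructible closure of both sides and using the fact that the constructible closure of a constructibly closed set equals itself (i.e. $\chius^\cons$ is idempotent), we conclude $\chius^\cons(Y_\infty)\subseteq \chius^\cons(Y_f)$.

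There is no real obstacle here; the content is entirely packaged inside Theorem \ref{supremum}. The only thing worth remarking on is that one should not attempt to prove the equality by directly comparing suprema of arbitrary subsets to suprema of finite subsets in $X$ itself — that approach would force one to reprove Theorem \ref{supremum}. Treating the corollary as a purely formal consequence of the containment $Y_\infty\subseteq\chius^\cons(Y_f)$ and of $Y_f\subseteq Y_\infty$ keeps the argument to a couple of lines.
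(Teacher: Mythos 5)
Your proof is correct and follows essentially the same route as the paper: both deduce from Theorem \ref{supremum} that $Y_\infty\subseteq\chius^\cons(Y_f)$, combine this with the trivial inclusion $Y_f\subseteq Y_\infty$, and conclude by monotonicity and idempotency of the constructible closure.
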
 
\begin{proof}
By Theorem \ref{supremum}, $Y_\infty$ exists and $Y_\infty\subseteq\chius^\cons(Y_f)$. Since $Y_f\subseteq Y_\infty$, we must have $\chius^\cons(Y_f)\subseteq\chius^\cons(Y_\infty)\subseteq\chius^\cons(Y_f)$.
\end{proof}

In general, we cannot recover the constructible topology exclusively from suprema and infima, because of two separate problems. The first one is that $Y_f$ need not to exist: for example, if the topology of $X$ is already Hausdorff (i.e., it coincides with the constructible topology), then no set with two or more elements has maximum. The second issue is that, even if $Y_f$ and $Y_{(f)}$ exist, the constructible closure of $Y$ may not be limited to $Y_\infty\cup Y_{(\infty)}$, since it is possible for limit points to be incomparable with elements of $Y$ (see Example \ref{ex:almded}).

Nevertheless, there are some cases where suprema and infima do determine the constructible closure.

\begin{Prop}\label{prop:chiuscons-linord}
Let $X$ be a spectral space whose specialization order is total, and let $Y\subseteq X$. Then, the following hold.
\begin{enumerate}[\rm (1)]
\item\label{prop:chiuscons-linord:cup} $\chius^\cons(Y)=Y_\infty\cup Y_{(\infty)}$.
\item\label{prop:chiuscons-linord:closed} $Y$ is closed in the constructible topology if and only if the supremum and the infimum of every nonempty subset of $Y$ belong to $Y$.
\end{enumerate}
\end{Prop}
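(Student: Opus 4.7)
The plan is to prove (1) and derive (2) as a trivial consequence, since the inclusions $Y\subseteq Y_\infty$ and $Y\subseteq Y_{(\infty)}$ always hold (take $\{y\}$ for each $y\in Y$).

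For the easy inclusion in (1), totality of the specialization order makes every finite nonempty subset of $Y$ a chain with maximum and minimum in $Y$; hence $Y_f=Y=Y_{(f)}$ exist. Theorem \ref{supremum} then yields $Y_\infty\subseteq\chius^\cons(Y_f)=\chius^\cons(Y)$, and by the inverse-topology version, $Y_{(\infty)}\subseteq\chius^\cons(Y)$.

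The heart of the argument is the reverse inclusion $\chius^\cons(Y)\subseteq Y_\infty\cup Y_{(\infty)}$. I fix $x\in\chius^\cons(Y)$ and may assume $x\notin Y$ (otherwise $x=\sup\{x\}\in Y_\infty$). Totality splits $Y=Y_<\cup Y_>$ with $Y_<:=\{y\in Y:y<x\}$ and $Y_>:=\{y\in Y:y>x\}$. By the existence of $Y_\infty$ and $Y_{(\infty)}$ just established, $a:=\sup Y_<$ and $b:=\inf Y_>$ exist whenever the corresponding half is nonempty, and $a\leq x\leq b$ by definition. Suppose for contradiction that $a<x$ (when $a$ is defined) and $x<b$ (when $b$ is defined). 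I then claim $U:=\{z\in X:z>a\}\cap\{z\in X:z<b\}$ (omitting a clause when the relevant bound is undefined) is an $X^\cons$-open neighborhood of $x$ disjoint from $Y$, contradicting $x\in\chius^\cons(Y)$. Openness follows because $\{z:z\leq a\}=\chius^\inverse(\{a\})$ is closed in $X^\inverse$ and $\{z:z\geq b\}=\chius(\{b\})$ is closed in $X$, so using totality to rewrite the complements, both $\{z:z>a\}$ and $\{z:z<b\}$ are $X^\cons$-open. Clearly $x\in U$, while $U\cap Y=\emptyset$ since $y\in Y_<$ forces $y\leq a$ and $y\in Y_>$ forces $y\geq b$. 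The degenerate cases where $Y_<$ or $Y_>$ is empty collapse naturally: omitting one clause of $U$ leaves the same argument valid and directly forces $x=\sup Y\in Y_\infty$ or $x=\inf Y\in Y_{(\infty)}$.

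Part (2) is then immediate from (1): $Y$ is closed in $X^\cons$ iff $\chius^\cons(Y)=Y$ iff $Y_\infty\cup Y_{(\infty)}\subseteq Y$, which is the stated hypothesis. The only genuine point of care is the bookkeeping of the degenerate subcases with empty $Y_<$ or $Y_>$; I expect no conceptual obstacle beyond that, since the construction of the separating neighborhood $U$ is purely formal once totality is exploited to convert negations of $\leq$ and $\geq$ into strict inequalities.
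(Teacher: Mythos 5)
Your proof is correct and follows essentially the same route as the paper's: both establish the easy inclusion via Theorem \ref{supremum}, and both construct a constructible-open ``interval'' around a candidate point using the supremum of the elements of $Y$ below it and the infimum of those above it. The only superficial differences are that the paper argues the reverse inclusion directly (taking $z\notin Y_\infty\cup Y_{(\infty)}$ and exhibiting a separating neighborhood) while you take the contrapositive via a small contradiction, and that you derive the openness of the interval from closedness of $\{a\}^\downarrow$ in $X^\inverse$ and of $\{b\}^\uparrow$ in $X$ together with totality, whereas the paper invokes the fact that the constructible topology coincides with the interval topology when the specialization order is total.
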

\begin{proof}
\ref{prop:chiuscons-linord:cup} Since the specialization order of $X$ is total, we have $Z=Z_f$ for every $Z\subseteq Y$; hence, $\sup(Z),\inf(Z)$ exist and are in $Y$ by Theorem \ref{supremum}, i.e., $Y':=Y_\infty\cup Y_{(\infty)}\subseteq\chius^\cons(Y)$.

Now take any element $z\notin Y'$. Let $H:=\{y\in Y\mid y< z\}$, and let
\begin{equation*}
\widehat{H}:=\begin{cases}
\{x\in X\mid x>\sup(H)\} & \text{if~}H\neq\emptyset\\
X & \text{otherwise}.
\end{cases}
\end{equation*}
In the same way, set $K:=\{y\in Y\mid y>z\}$ and 
\begin{equation*}
\widehat{K}:=\begin{cases}
\{x\in X\mid x<\inf(K)\} & \text{if~}K\neq\emptyset\\
X & \text{otherwise}.
\end{cases}
\end{equation*}
By \cite[Theorem 1.6.4(ii)(b)]{di-sc-tr}, the constructible topology on $X$ is the interval topology with respect to the specialization order, i.e., it is the topology having the family of sets $\{x\}^\uparrow$ and $\{y\}^\downarrow$ as a subbasis of closed sets (see e.g. \cite[A.8(iv)]{di-sc-tr}). Hence, $\widehat{H}$ and $\widehat{K}$ are both open in the constructible topology, and thus $Z:=\widehat{H}\cap\widehat{K}$ is open. Since $\sup(H)$ and $\inf(K)$ (when they exist) are in $Y$, we have $z\in Z$; on the other hand, $Z\cap Y=\emptyset$ by construction. It follows that $z\notin\chius^\cons(Y)$, and thus $Y=Y'$, as claimed. 

\ref{prop:chiuscons-linord:closed} follows from \ref{prop:chiuscons-linord:cup}.
\end{proof}

We now introduce another class of spectral space; while the property defining them is restrictive, it actually holds for several spectral spaces of algebraic interest, as for example the set of submodules of a module or the set of overrings of an integral domain (see Sections \ref{sect:submodules} and \ref{sect:overrings}).
\begin{definition}\label{defin:locwithmax}
Let $X$ be a spectral space. We say that \emph{$X$ is locally with maximum} if every point of $X$ admits a local basis of open sets each of which has maximum, with respect to the order induced by the topology. 
\end{definition}

\begin{remark}\label{loc-with-maximum}
Let $X$ be a spectral space. If $U$ is a subset of $X$ with maximum $u_0$, then $U$ is quasi-compact (since, if $\mathcal A$ is an open cover of $U$ and $A\in\mathcal A$ contains $u_0$, then $A\supseteq U$). If furthermore $U$ is open, since $\chius^\inverse(\{u_0\})=\{x\in X\mid x\leq u_0 \}$, we immediately infer $U=\chius^\inverse(\{u_0\})$.
\end{remark}

We now relate the class of spectral spaces that are locally with maximum with other classes of topologies naturally arising starting from partially ordered sets. 
\begin{Prop}\label{prop:locwithmax}
Let $(X,\mathcal T)$ be a spectral space with specialization order $\leq$. Then, each of the following property implies the next ones.
\begin{enumerate}[\rm (i)]
\item\label{prop:locwithmax:locwithmax} $X$ is locally with maximum;
\item\label{prop:locwithmax:scott} $\mathcal T=\sigma(X^\inverse)$;
\item\label{prop:locwithmax:max} every open and quasi-compact subset of $X$ has finitely many maximal elements;
%\item\label{prop:locwithmax:max} if $\Omega$ is an open and compact subset of $X$, then $\Omega=(\Omega^{\mathrm{max}})^{\mathrm{gen}}$ and $\Omega^{\mathrm{max}}$ is finite;
\item\label{prop:locwithmax:tau} $\mathcal T^\inverse=\tau^U(X)=\tau^\ell(X^\inverse)$ (where $\mathcal T^{\rm inv}$ is the inverse topology of $\mathcal T$).
\end{enumerate}
Furthermore, if $(X,\leq)$ is a complete lattice then \ref{prop:locwithmax:scott}, \ref{prop:locwithmax:max} and \ref{prop:locwithmax:tau} are equivalent. 
\end{Prop}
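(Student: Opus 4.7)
The plan is to prove (i)$\Rightarrow$(ii)$\Rightarrow$(iii)$\Rightarrow$(iv) directly, and then, under the complete lattice hypothesis, close the loop via (iv)$\Rightarrow$(iii), making (ii), (iii), (iv) mutually equivalent.

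For (i)$\Rightarrow$(ii) I would establish the two inclusions between $\mathcal{T}$ and $\sigma(X^{\inverse})$ separately. The inclusion $\mathcal{T}\subseteq\sigma(X^{\inverse})$ holds for any spectral space: an open $U\in\mathcal{T}$ is automatically a $\leq$-down-set, and if $D$ is $\leq$-down-directed with $\inf D = p\in U$, then the down-directed family $\{\{d\}^{\downarrow}\cap(X\setminus U)\}_{d\in D}$ of subsets closed in $X^{\cons}$ has empty intersection $\{p\}^{\downarrow}\cap(X\setminus U)=\emptyset$ (since $U$ is down-closed), so quasi-compactness of $X^{\cons}$ yields some $d_{0}\in D\cap U$. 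For the reverse inclusion $\sigma(X^{\inverse})\subseteq\mathcal{T}$ the hypothesis (i) is essential: given Scott-open $V$ and $y\in V$, the maxima of the basic opens-with-maximum neighborhoods of $y$ form a $\leq$-down-directed set $D$ with $\inf D=y$ (any lower bound lies in every neighborhood of $y$, hence is $\leq y$ by $T_{0}$), so the Scott condition on $V$ produces some $w\in D\cap V$, and the corresponding basic open $W=\{w\}^{\downarrow}$ (Remark~\ref{loc-with-maximum}) lies in $V$ by down-closedness.

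For (ii)$\Rightarrow$(iii) I would argue by contradiction: if $U$ is open and quasi-compact with $\Max(U)$ infinite, the goal is to build an open cover of $U$ with no finite subcover. The key is that each singleton $\{m\}$ for $m\in\Max(U)$ is $\mathcal{T}$-closed (since $\overline{\{m\}}=\{m\}^{\uparrow}=\{m\}$ by maximality), so finite unions of such singletons are closed and their complements open; the Scott-openness of $U$ then lets one refine the cover using down-directed subfamilies of $\Max(U)$ and the Scott condition to certify that no finite subfamily can exhaust $U$. Step (iii)$\Rightarrow$(iv) is then a direct computation: every point of the quasi-compact $T_{0}$ subspace $U$ lies below some element of $\Max(U)$, so $U=\bigcup_{m\in\Max(U)}\{m\}^{\downarrow}$, a finite union by (iii); each $\{m\}^{\downarrow}=\chius^{\inverse}(\{m\})$ is subbasic closed in both $\mathcal{T}^{\inverse}$ and $\tau^{\ell}(X^{\inverse})$, so these two topologies have the same subbasic closed sets and hence coincide, and the identity $\tau^{U}(X)=\tau^{\ell}(X^{\inverse})$ is tautological from the subbasis definitions.

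For the equivalence of (ii), (iii), (iv) under the complete lattice hypothesis, it is enough to prove (iv)$\Rightarrow$(iii): for $U$ open quasi-compact, (iv) writes $U$ as an intersection of finite unions of principal $\leq$-ideals, and completeness of $(X,\leq)$ lets one take suprema over arbitrary subfamilies of $\Max(U)$; if $\Max(U)$ were infinite, I would exhibit, via such suprema, a $\tau^{\ell}(X^{\inverse})$-limit point of $U$ lying outside $U$, contradicting the $\tau^{\ell}(X^{\inverse})$-closedness of $U$ guaranteed by (iv). The main obstacle I anticipate is the step (ii)$\Rightarrow$(iii) and its complete-lattice analogue: translating topological quasi-compactness of a Scott-open set into a finiteness statement on its maximal elements is not automatic for general dcpos, and the construction of the bad cover must exploit both the Scott-directed structure and the special fact that in our setting singletons of maximal points are $\mathcal{T}$-closed.
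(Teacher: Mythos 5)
Your proof of (i)$\Rightarrow$(ii) and (iii)$\Rightarrow$(iv) is essentially sound. For the forward inclusion $\mathcal T\subseteq\sigma(X^{\inverse})$ you give a self-contained compactness argument in $X^{\cons}$ where the paper simply invokes \cite[Proposition~6]{ho}; the reverse inclusion and (iii)$\Rightarrow$(iv) match the paper's route.

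The genuine gap is in (ii)$\Rightarrow$(iii), and you sense it yourself. Your intended cover construction hinges on the claim that each $\{m\}$, $m\in\Max(U)$, is $\mathcal T$-closed because ``$\overline{\{m\}}=\{m\}^{\uparrow}=\{m\}$ by maximality.'' But $m$ is only assumed maximal \emph{in $U$}, and $U$ is an open set, hence a $\leq$-down-set; maximality in $U$ says nothing about specializations of $m$ lying outside $U$. Already in the Sierpi\'nski space $X=\{0<1\}$ with opens $\emptyset,\{0\},X$, the point $0$ is maximal in the open set $U=\{0\}$ yet $\overline{\{0\}}=X\neq\{0\}$. So the closed sets you need do not exist, and the cover with no finite subcover is never actually produced; the rest of the step is an outline, not a proof. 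The paper sidesteps this entirely by quoting \cite[7.1.8(viii)]{di-sc-tr}, which directly identifies the quasi-compact opens of a Scott topology as the down-sets of finite sets. Without some result of that strength, I don't see how to complete your contradiction.

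The complete-lattice part has a similar issue: you propose to close the cycle via (iv)$\Rightarrow$(iii) by ``exhibiting, via suprema, a $\tau^{\ell}(X^{\inverse})$-limit point of $U$ lying outside $U$,'' but no such point is constructed, and it is not clear how to rule out that every such supremum still lies in $U$. The paper instead proves (iv)$\Rightarrow$(ii) through a structural input: if $\tau^{\ell}(X^{\inverse})$ is spectral, then by \cite[Theorem~7.2.8]{di-sc-tr} its inverse topology equals $\sigma(X^{\inverse})$, and then $\mathcal T=(\mathcal T^{\inverse})^{\inverse}=(\tau^{\ell}(X^{\inverse}))^{\inverse}=\sigma(X^{\inverse})$. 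Either implication would close the loop, but your version, as written, still needs a substantive argument to replace the appeal to that theorem.
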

\begin{proof}
\ref{prop:locwithmax:locwithmax} $\Longrightarrow$ \ref{prop:locwithmax:scott} Let $\Omega$ be an open subset of $X$ and let $D\subseteq X$ be lower-directed and such that $\inf(D)\in \Omega$. Then, by \cite[Proposition 6]{ho}, $\inf(D)\in \overline{D}\cap \Omega$. Since $\Omega$ is an open neighborhood of $\inf(D)$, it follows $\Omega\cap D\neq \emptyset$. Since $\Omega$ is closed by generizations, it follows that $\Omega$ is Scott-open, and thus $\mathcal T$ is coarser than the Scott topology on the opposite order of $X$, i.e., than $\sigma(X^\inverse)$.

Conversely, suppose that $\Omega$ is open in $\sigma(X^\inverse)$, and let $x\in\Omega$. Since $X$ is locally with maximum, there is a subset $D$ of $X$ such that $\mathcal{B}:=\{\{d\}^ {\downarrow}\mid d\in D \}$ is a local open basis at $x$ (with respect to the given spectral topology $\mathcal T$). If $d,e\in D$, then $V:=\{d\}^{\downarrow}\cap \{e\}^{\downarrow}$ is an open neighborhood of $x$ and, since $\mathcal B$ is local basis at $x$, there is a point $f\in D$ such that $\{f\}^{\downarrow}\subseteq V$. This proves that $D$ is lower-directed. Moreover, it is easily seen that $x=\inf(D)$. Since $\Omega\in \sigma(X^\inverse)$, we can pick a point $d\in D\cap \Omega$ and, since $\Omega$ is closed under generizations, it follows $x\in \{d\}^{\downarrow}\subseteq \Omega$. This proves that $\Omega\in\mathcal T$, i.e., $\mathcal T=\sigma(X^\inverse)$.

\medskip

\ref{prop:locwithmax:scott} $\Longrightarrow$ \ref{prop:locwithmax:max} Let $\Omega$ be an open and quasi-compact subset of $X$. By hypothesis, $\Omega$ is closed in the Scott topology of $X^\inverse$, and thus by \cite[7.1.8(viii)]{di-sc-tr} it is the downset of a finite set. The claim follows.

\medskip

\ref{prop:locwithmax:max} $\Longrightarrow$ \ref{prop:locwithmax:tau} The family $\mathring{\mathcal{K}}(X)$ of open and quasi-compact subsets of $X$ is a subbasis of closed sets of the inverse topology. Since $X$ is a spectral space, if $\Omega\in\mathring{\mathcal{K}}(X)$ then $\Omega=(\Omega^{\mathrm{max}})^\downarrow$; by hypothesis, $\Omega^{\mathrm{max}}$ is finite. Hence, the sets $\{p\}^\downarrow$, as $p$ ranges in $X$, form a subbasis of closed sets of $X^\inverse$. Therefore, $X^\inverse$ coincide with the coarse upper topology, as claimed.

\medskip

Suppose now that $(X,\leq)$ is a complete lattice and that \ref{prop:locwithmax:tau} holds. Then, in particular, $\tau^\ell(X^\inverse)$ is a spectral space, and thus by \cite[Theorem 7.2.8]{di-sc-tr} $\sigma(X^\inverse)=(\tau^\ell(X^\inverse))^\inverse$ is spectral. However, using the hypothesis,
\begin{equation*}
\mathcal T=(X^\inverse)^\inverse=(\tau^\ell(X^\inverse))^\inverse=\sigma(X^\inverse)
\end{equation*}
and thus \ref{prop:locwithmax:scott} holds, as claimed.
\end{proof}
\begin{remark}
\ref{prop:locwithmax:max} does not imply \ref{prop:locwithmax:locwithmax}. Let $X$ be the topological space constructed in the following way:
\begin{itemize}
\item $X=\{0,x,y_1,\ldots,y_n,\ldots\}$;
\item the nonempty closed sets are $X$, $\{x\}$, $\{y_1,\ldots,y_n\}$ and $\{x,y_1,\ldots,y_n\}$ for every $n\in \mathbb N$. 
\end{itemize}
Then, the order on $X$ is the following: $0$ is the minimal point, $x, y_1$ are maximal, there's no element between $0$ and $x$ and $y_1>y_2>\cdots$ is a descending chain without minimal elements. It is clear that $X$ is spectral: indeed, any open set of $X$ is quasi-compact, and the irreducible closed sets are precisely $X=\overline{\{0\}},\{x\}$ and $C_n:=\{y_1,\ldots, y_n\}=\overline{\{y_n\}}$, for all $n\geq 1$. 

Every open set (indeed, every set) has at most two maximal elements, and thus $X$ satisfy \ref{prop:locwithmax:max}. On the other hand, if $\Omega$ is an open set containing $x$, then  $\Omega=\{x,y_k,y_{k+1},\ldots\}$ for some $k\in \mathbb N$, and thus no open set containing $x$ has a maximum. It follows that $X$ is not locally with maximum.

The example actually shows that \ref{prop:locwithmax:scott} does not imply \ref{prop:locwithmax:locwithmax}, since every set closed by generizations is open with respect to $\sigma(X^\inverse)$, and thus $\sigma(X^\inverse)$ is equal to the given spectral topology.
\end{remark}

To use this definition, we need the following connection between closures of different topologies.
\begin{lemma}\label{closure-sup}
Let $X$ be a spectral space, and let $\mathcal T$ be a topology on $X$ such that any open and quasi-compact subset of $X$ is closed, with respect to $\mathcal T$. If $Y\subseteq Z$ are  nonempty subsets of $X$, $\chius^\mathcal{T}(Y)$ is the closure  of $Y$ with respect to $\mathcal T$ and there exists $\sup(Y)\in X$, then there exists $\sup(\chius^\mathcal{T}(Y)\cap Z)$, and we have $\sup(Y)=\sup(\chius^\mathcal{T}(Y)\cap Z)$. 
\end{lemma}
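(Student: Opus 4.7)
The plan is to let $s := \sup(Y)$ and show two things: that $s$ is an upper bound of $\chius^{\mathcal{T}}(Y)\cap Z$, and that it is the least upper bound. Once both are established, the existence of the supremum and the equality $\sup(Y) = \sup(\chius^{\mathcal{T}}(Y)\cap Z)$ follow at once.

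For the upper bound, I would first observe that the ``downset'' $\{s\}^{\downarrow}=\{x\in X\mid x\leq s\}$ coincides with $\chius^{\inverse}(\{s\})$, as recalled in Section \ref{sect:inverse}. By the very definition of the inverse topology, this set is closed in $X^{\inverse}$ and hence is an intersection of open and quasi-compact subsets of the original spectral space $X$. The hypothesis on $\mathcal{T}$ tells us that each such open and quasi-compact subset is closed with respect to $\mathcal{T}$; therefore $\{s\}^{\downarrow}$ is an intersection of $\mathcal{T}$-closed sets, and is itself closed in $\mathcal{T}$. Since $Y\subseteq\{s\}^{\downarrow}$ (because every $y\in Y$ satisfies $y\leq s$), we get $\chius^{\mathcal{T}}(Y)\subseteq \{s\}^{\downarrow}$, so every element of $\chius^{\mathcal{T}}(Y)\cap Z$ is $\leq s$.

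For the least upper bound property, the key point is the inclusion $Y\subseteq \chius^{\mathcal{T}}(Y)\cap Z$, which holds because $Y\subseteq Z$ by hypothesis and $Y\subseteq \chius^{\mathcal{T}}(Y)$ trivially. Therefore any upper bound of $\chius^{\mathcal{T}}(Y)\cap Z$ is an upper bound of $Y$ and so must be $\geq s = \sup(Y)$. Combining this with the previous paragraph, $s$ is the supremum of $\chius^{\mathcal{T}}(Y)\cap Z$, as desired.

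The only step that requires any genuine content is identifying $\{s\}^{\downarrow}$ as a $\mathcal{T}$-closed set, and this is where the hypothesis on $\mathcal{T}$ is used in an essential way; once that is in place, the rest is a straightforward comparison of upper bounds. I do not anticipate any real obstacle in the argument, but I would take care to formulate it so that $\mathcal{T}$ is not assumed to share any separation or compactness property with the spectral topology beyond the stated condition, since the lemma is meant to apply uniformly to the constructible and the inverse topologies in the sequel.
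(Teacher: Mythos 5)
Your proof is correct and is essentially the same argument as the paper's, just packaged more directly: you observe that $\{\sup(Y)\}^{\downarrow}=\chius^{\inverse}(\{\sup(Y)\})$ is an intersection of open quasi-compact sets and hence $\mathcal{T}$-closed, so it absorbs $\chius^{\mathcal{T}}(Y)$, whereas the paper reaches the same conclusion by a pointwise contradiction using a single separating open quasi-compact set. Both arguments hinge on exactly the same hypothesis on $\mathcal{T}$ and on the same easy observation that $Y\subseteq\chius^{\mathcal{T}}(Y)\cap Z$ handles the ``least'' half.
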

\begin{proof}
Since $Y\subseteq\chius^\mathcal{T}(Y)\cap Z$, it is sufficient to prove that any upper bound $x \in X$ for $Y$ is an upper bound for $\chius^\mathcal{T}(Y)\cap Z$ too. Assume that there exists an element $z\in\chius^\mathcal{T}(Y)\cap Z$ such that $z\nleq x$ (i.e., $x\notin\chius(\{z\})$). Since $X$ is a spectral space, there exists an open and quasi-compact subset $\Omega$ of $X$ such that $x\in \Omega$ and $z\notin \Omega$. Since $\Omega$ is closed with respect to $\mathcal T$, it follows that $Y$ can't be contained in $\Omega$ (otherwise $\chius^\mathcal{T}(Y)\subseteq\Omega$); hence, there is an $y\in Y\cap (X\setminus\Omega)$. Since $\Omega$ is open in the starting topology, $\chius(\{y\})\subseteq X\setminus\Omega$, and so $x\notin\chius(\{y\})$, that is, $y\nleq x$, against the fact that $x$ is an upper bound of $Y$ in $X$. 
\end{proof}

\begin{Prop}\label{density-locally-with-maximum}
Let $X$ be a spectral space which is locally with maximum and let $Y$ be a proconstructible subset of $X$ such that $Y_f$ exists. Then:
\begin{enumerate}[\rm (1)]
\item\label{density-locally-with-maximum:f} $Y_\infty$ exists and $\chius^\cons(Y_f)=Y_\infty$;
\item\label{density-locally-with-maximum:fixn} for every integer $n\geq 1$, the set $Y_n:=\{\sup(F)\mid F\subseteq Y, |F|=n\}$ is proconstructible in $X$;
\item\label{density-locally-with-maximum:dense} if $Z\subseteq Y$ is dense in $Y$, then $Z_f$ is dense in $Y_\infty$.
\end{enumerate}
\end{Prop}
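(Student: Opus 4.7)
I would prove parts \ref{density-locally-with-maximum:fixn}, \ref{density-locally-with-maximum:f}, \ref{density-locally-with-maximum:dense} in that order: \ref{density-locally-with-maximum:f} will invoke the locally-with-maximum hypothesis directly, and \ref{density-locally-with-maximum:dense} will need to know that $Y_\infty$ is constructibly closed, which \ref{density-locally-with-maximum:f} provides.

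For \ref{density-locally-with-maximum:fixn}, my plan is to realize $Y_n$ as the image of the well-defined map $\psi_n \colon Y^n \to X$, $(y_1,\ldots,y_n) \mapsto \sup\{y_1,\ldots,y_n\}$. Since the constructible topology on $X^n$ coincides with the product of the constructible topologies, $Y^n$ is proconstructible in $X^n$ and thus compact in $(X^n)^\cons$. It will then suffice to show that $\psi_n$ is continuous between the two constructible topologies, for the image will be compact in the Hausdorff space $X^\cons$, hence proconstructible. Continuity reduces to showing that $\psi_n^{-1}(\Omega)$ is clopen in $(Y^n)^\cons$ for every open and quasi-compact $\Omega\subseteq X$. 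Using the locally-with-maximum hypothesis together with quasi-compactness, I can write $\Omega=\bigcup_{j=1}^{k}\{u_j\}^\downarrow$; since each $\{u_j\}^\downarrow$ is downward closed, a finite supremum lies in $\Omega$ iff all the summands lie in some common $\{u_j\}^\downarrow$, giving $\psi_n^{-1}(\Omega)=\bigcup_{j}(\{u_j\}^\downarrow)^{n}\cap Y^n$, manifestly clopen.

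For \ref{density-locally-with-maximum:f}, Theorem \ref{supremum} already yields $Y_\infty\subseteq\chius^\cons(Y_f)$, so only the reverse inclusion needs attention. Given $x\in\chius^\cons(Y_f)$, I would set $A:=Y\cap\{x\}^\downarrow$ (a proconstructible set) and aim to show $\sup A=x$, which places $x$ in $Y_\infty$. Let $\mathcal{M}_x$ denote the set of maxima of basic open neighborhoods of $x$; by the locally-with-maximum property $\mathcal{M}_x$ is downward directed under $\leq$ and $\bigcap_{u_0\in\mathcal{M}_x}\{u_0\}^\downarrow=\{x\}^\downarrow$. Each $\{u_0\}^\downarrow$ is clopen in $X^\cons$ and therefore meets $Y_f$; decomposing a witness as $\sup F$ with $F\subseteq Y$ finite forces $F\subseteq Y\cap\{u_0\}^\downarrow$, so the downward directed family $\{Y\cap\{u_0\}^\downarrow\}_{u_0\in\mathcal{M}_x}$ of nonempty proconstructible subsets of the compact space $Y^\cons$ has, by the finite intersection property, nonempty intersection $A$. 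By Theorem \ref{supremum} applied to $A$, $v:=\sup A$ exists and $v\leq x$. The main obstacle is to exclude $v<x$: because $v\leq x$, no original-topology neighborhood of $x$ separates $x$ from $v$, but $T_0$-separation in the other direction gives an open set containing $v$ but not $x$, and locally-with-maximum refines it to $\{w\}^\downarrow$ with $v\leq w$ and $x\not\leq w$. Then $\{u_0\}^\downarrow\setminus\{w\}^\downarrow$ is a constructible neighborhood of $x$ for every $u_0\in\mathcal{M}_x$; meeting it with $Y_f$ and decomposing a witness produces $f\in Y$ with $f\leq u_0$ and $f\not\leq w$ (if all elements of $F$ were $\leq w$, so would their supremum be). A second finite-intersection argument on the downward directed family $\{(Y\cap\{u_0\}^\downarrow)\setminus\{w\}^\downarrow\}_{u_0\in\mathcal{M}_x}$ inside $Y^\cons$ then produces $a\in A$ with $a\not\leq w$, contradicting $a\leq v\leq w$.

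For \ref{density-locally-with-maximum:dense}, assume $Z\subseteq Y$ is dense in $Y$ in the constructible topology. Because $Z_f\subseteq Y_f\subseteq Y_\infty$ and $Y_\infty=\chius^\cons(Y_f)$ is constructibly closed by \ref{density-locally-with-maximum:f}, it suffices to show $Y_f\subseteq\chius^\cons(Z_f)$. Given $t=\sup G\in Y_f$ with $G\subseteq Y$ finite and a basic constructible neighborhood $N=\{u_0\}^\downarrow\setminus\bigcup_{i=1}^{m}\{w_i\}^\downarrow$ of $t$, I observe that for each $i$ some $g^{(i)}\in G$ satisfies $g^{(i)}\not\leq w_i$ (otherwise $t=\sup G\leq w_i$). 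For each $g\in G$, the set $\{u_0\}^\downarrow\setminus\bigcup_{i:\,g\not\leq w_i}\{w_i\}^\downarrow$ is a constructible neighborhood of $g$, and density furnishes $z_g\in Z$ in it. Then $s:=\sup\{z_g:g\in G\}\in Z_f$ satisfies $s\leq u_0$, while $z_{g^{(i)}}\not\leq w_i$ forces $s\not\leq w_i$ for every $i$, so $s\in N\cap Z_f$, as required.
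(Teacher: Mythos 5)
Your proof is correct, and it takes a genuinely different route from the paper's. The published argument for part \ref{density-locally-with-maximum:f} rests entirely on the Vietoris hyperspace $\boldsymbol{\mathcal{X}}(Y)$ and on the cited Lemma 4.6 of the Vietoris paper (which guarantees that $H\mapsto\sup H$ is a spectral map when the space is locally with maximum); parts \ref{density-locally-with-maximum:fixn} and \ref{density-locally-with-maximum:dense} are then deduced via the auxiliary maps $\Sigma_n\colon Y^n\to\boldsymbol{\mathcal{X}}(X)$ and $\Psi_n=\Sigma_n\circ\Sigma$. You dispense with the hyperspace entirely. For \ref{density-locally-with-maximum:fixn} you map $Y^n$ straight into $X$ rather than factoring through $\boldsymbol{\mathcal{X}}(X)$, and your computation $\psi_n^{-1}(\Omega)=\bigcup_j(\{u_j\}^\downarrow)^n\cap Y^n$ after covering $\Omega$ by finitely many downsets $\{u_j\}^\downarrow$ is exactly the right use of the locally-with-maximum hypothesis. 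For \ref{density-locally-with-maximum:f} your finite-intersection-property argument (first showing $A=Y\cap\{x\}^\downarrow$ is a nonempty directed intersection of proconstructible sets, then ruling out $\sup A<x$ by a second FIP argument against the clopen set $X\setminus\{w\}^\downarrow$) is a self-contained replacement for the appeal to the external lemma; I checked each step, including the directedness of $\mathcal{M}_x$, the identity $\bigcap_{u_0\in\mathcal{M}_x}\{u_0\}^\downarrow=\{x\}^\downarrow$, and the fact that $v\leq x$ forces the $T_0$-separation to run from $v$ to $x$ rather than the reverse. For \ref{density-locally-with-maximum:dense}, your direct attack on a basic constructible neighborhood $\{u_0\}^\downarrow\setminus\bigcup_i\{w_i\}^\downarrow$ (after observing that loc-with-max lets one reduce any basic constructible open to this shape) again avoids the $\Psi_n$ machinery. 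What the paper's route buys is brevity once the hyperspace apparatus is on the table; what your route buys is a proof readable without the Vietoris paper, at the cost of the two FIP compactness arguments. Both are sound.

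One small point worth flagging, which is a feature of the statement itself and not of your argument: your $\psi_n(Y^n)$, like the paper's $\Sigma(\Sigma_n(Y^n))$, is really $\{\sup F\mid F\subseteq Y,\,1\leq|F|\leq n\}$ rather than the advertised $\{\sup F\mid F\subseteq Y,\,|F|=n\}$, since tuples in $Y^n$ may have repeated coordinates; this discrepancy is inherited from the paper and is harmless for part \ref{density-locally-with-maximum:dense}, where only $\bigcup_n Y_n=Y_f$ matters.
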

\begin{proof}
\ref{density-locally-with-maximum:f} Since $Y$ is proconstructible in $X$, it is spectral, and thus it makes sense to consider the hyperspace $\xcal(Y)$, endowed with the upper Vietoris topology. By Theorem \ref{supremum}, the map $\Sigma:\xcal(Y)\longrightarrow X$ defined by setting $\Sigma(H):=\sup(H)$, for each $H\in\xcal(Y)$, is well-defined. Since the space is locally with maximum, by \cite[Lemma 4.6]{fi-fo-spi-vietoris} $\Sigma$ is a spectral map; hence, it is continuous when $\xcal(Y)$ and $X$ are equipped with their constructible topologies, and thus $\Sigma$ is a closed map (again in the constructible topology). It follows, in particular, that $\Sigma(\xcal(Y))$ is proconstructible. On the other hand, by Lemma \ref{closure-sup} (applied by taking as $\mathcal T$ the inverse topology on $X$) we infer that $\Sigma(\xcal(Y))=Y_\infty$. Now the conclusion follows Corollary \ref{infinity}. 

\ref{density-locally-with-maximum:fixn} Consider the map
\begin{equation*}
\begin{aligned}
\Sigma_n\colon Y^n& \longrightarrow \xcal(X)\\
(y_1,\ldots,y_n) & \longmapsto \{y_1,\ldots,y_n\}^\downarrow,
\end{aligned}
\end{equation*}
where $Y^n$ is endowed with the product topology of the topology induced by $X$. Since $Y$ is a spectral space (being proconstructible) then so is $Y^n$, by \cite[Theorem 7]{ho}. 

If $\Omega$ is open and quasi-compact in $X$, then
\begin{align*}
\Sigma_n^{-1}(\Omega)= & \{(y_1,\ldots,y_n)\mid \{y_1,\ldots,y_n\}^\downarrow\subseteq\Omega\}=\\
=& \{(y_1,\ldots,y_n)\mid y_1,\ldots,y_n\in\Omega\}=\Omega^n,
\end{align*}
which is quasi-compact as it is a product of quasi-compact spaces. Hence, $\Sigma_n$ is a spectral map, and in particular $\Sigma_n(Y^n)$ is closed in the constructible topology. By the previous part of the proof, it follows that $\Sigma(\Sigma_n(Y^n))=Y_n$ is closed, as well.

\ref{density-locally-with-maximum:dense} Note first that $Z_f$ exists since $Z\subseteq Y$ and $Y_f$ exists by hypothesis. If $Z$ is dense in $Y$, then $Z^n$ is dense in $Y^n$; setting $\Psi_n:=\Sigma_n\circ\Sigma:Y^n\longrightarrow Y_\infty$ (with $\Sigma$ and $\Sigma_n$ as in the previous point), we see that $\Psi_n(Z^n)=Z_n$ is dense in $\Psi_n(Y^n)=Y_n$. Therefore, $Z_f=\bigcup_n Z_n$ is dense in $Y_f=\bigcup_nY_n$; by part \ref{density-locally-with-maximum:f}, it follows that $\chius^\cons(Z_f)=\chius^\cons(Y_f)=Y_\infty$, that is, $Z_f$ is dense in $Y_\infty$. The claim is proved.
\end{proof}

\begin{remark}
The fact that the starting set $Y$ is proconstructible is critical in the hypothesis of Proposition \ref{density-locally-with-maximum}. For example, if $X$ is a spectral space whose specialization order is total, then $Y=Y_f$ for all subspaces $Y$, but $\chius^\cons(Y)$ may contain elements beyond $Y_\infty$ (for example, the infimum of its subsets).
%For example, suppose $X:=\{0,y_1,y_2,\ldots\}$ is endowed with the topology having as closed sets $X$ itself and the spaces $\{y_1,\ldots,y_n\}$, for each $n\inN$. Then, $X$ is a spectral space whose specialization order is total; indeed, $0$ is the minimum while the $y_i$ are a descending chain $y_1>y_2>\cdots$. If $Y:=\{y_1,y_2,\ldots\}$, then $Y=Y_f=Y_\infty$, but $\chius^\cons(Y)=X$.
\end{remark}

We end this section by giving an example where the constructible closure is larger than $Y_\infty\cup Y_{(\infty)}$. Recall that an \emph{almost Dedekind domain} is an integral domain $D$ such that $D_{\f m}$ is a discrete valuation ring for every maximal ideal $\f m$ of $D$.
\begin{example}\label{ex:almded}
Let $D$ be a non-Noetherian almost Dedekind domain and let $\mathfrak{n}$ be a non-finitely generated maximal ideal of $D$ (for explicit examples of almost Dedekind domains which are not Dedekind see, for instance, \cite[p. 426]{Nakano} and \cite{lo95}). Let $X_0:=\Spec(D)$, and define a topological space $X$ in the following way: as a set, $X$ is the disjoint union of $X_0$ and an element $\infty\notin X_0$, while the open sets of $X$ are $X$ itself and the open sets of $X_0$. On $X_0$, the order induced by this topology is the the same order of $X_0$, while $\infty$ is bigger than every element of $X$ and so is the unique maximal element of $X$. In particular, $X$ is T$_0$ and quasi-compact. 

The open and quasi-compact subsets of $X$ are the open and quasi-compact subsets of $X_0$, plus $X$ itself, and thus they form a basis closed by finite intersection. Furthermore, the nonempty irreducible closed subsets of $X$ are $\{\infty\}$ and the sets $C\cup\{\infty\}$, where $C$ is a nonempty irreducible closed subset of $X_0$, and thus in particular they have a generic point. Thus, $X$ is spectral.

%We claim that $X$ is spectral. If $\mathcal{S}_0$ is the basis of $X_0$ formed by all open and quasi-compact subsets, then $\mathcal{S}:=\mathcal{S}_0\cup\{X\}$ is a basis of $X$. Let $\mathscr{U}$ be an ultrafilter on $X$. If $X_0\notin\mathscr{U}$, then $\{\infty \}\in \ms U$ (i.e., $\mathscr{U}$ is the principal ultrafilter based on $\infty$) and thus $\infty$ is an ultrafilter limit point of $\mathscr{U}$. If $X_0\in\mathscr{U}$, then $\ms U_0:=\{U\cap X_0\mid U\in\ms U \}$ is an ultrafilter on $X_0$, and since $X_0$ is spectral it has an ultrafilter limit point $x$ with respect to $\ms U_0$, which is also an ultrafilter limit point of $X$ with respect to $\mathscr{U}$. By \cite[Corollary 3.3]{Fi}, $X$ is a spectral space.

Consider now $Y:=X\setminus\{\f n \}$. Then, every subset $H$ of $Y$ has supremum and infimum in $X$, and they belong to $Y$: as a matter of fact, if $|H\cap\Max(D)|\leq 1$ then $H$ is linearly ordered (and finite) while if $|H\cap\Max(D)|\geq 2$ then $(0)$ is the infimum of $H$ and $\infty$ is its supremum. We claim that $\mathfrak{n}\in\chius^\cons(Y)$; since $X_0$ is open and quasi-compact in $X$ (and so proconstructible) by Proposition \ref{pro-retro-spec} its constructible topology is the subspace topology of the constructible topology of $X$, and thus we need only to show that $\mathfrak{n}$ is in the constructible closure of $Y\cap X_0$ in $X_0$. If not, then $Y\cap X_0$ would be proconstructible, and thus in particular quasi-compact; hence, $Y\cap X_0$ is the open set induced by a finitely generated ideal $I$, and so $\mathfrak{n}$ is the radical of a finitely generated ideal. Since $D$ is almost Dedekind, by \cite[Theorem 36.4]{gilmer} it would follow that $I=\mathfrak{n}^k$ for some positive integer $k$; since an almost Dedekind domain is Pr\"ufer, $I$ is invertible, and so $\mathfrak{n}$ would be invertible, a contradiction. Hence, $\mathfrak{n}\in\chius^\cons(Y\cap X_0)\subseteq\chius^\cons(Y)$, as claimed.
\end{example}

\section{Algebraic lattices of sets}\label{sect:alglatt}
The main examples of spectral spaces to which we want to apply the results in the previous sections are spaces of submodules and of overrings. Those settings, and analogous spaces constructed from algebraic substructures, are best understood in the framework of \emph{algebraic lattices of sets} (see \cite[7.1.12]{di-sc-tr}).

Let $S$ be a nonempty set, and let $\mathfrak{P}(S)$ be its power set, ordered by inclusion. A family $\mathcal{L}\subseteq\mathfrak{P}(S)$ is an \emph{algebraic lattice of sets} if it satisfies the following properties:
\begin{enumerate}
    \item[\rm (a1)] $\mathcal L$ is closed under arbitrary intersections (in particular, $S\in \mathcal L$).
    \item[\rm (a2)] $\mathcal L$ is closed under nonempty up-directed unions.
\end{enumerate}
Examples of such families include the set of submodules of a module, the set of subrings of a fixed ring, or more generally the set of substructures of an algebraic structure. Topologically, such families behave very well, since every such $\mathcal{L}$ is a spectral space under the coarse lower topology associated to $\subseteq$ \cite[Part (iii) of 7.2.12]{di-sc-tr}; more precisely, $\mathcal{L}$ is a proconstructible subset of $\mathfrak{P}(S)$ (when $\mathfrak{P}(S)$ is endowed with the coarse lower topology), since the inclusion $\mathcal{L}\longrightarrow\mathfrak{P}(S)$ is a spectral map \cite[end of 7.2.12]{di-sc-tr}.

A different point of view on algebraic lattices of sets can be given through the concept of closure operation. A \emph{closure operation} on $S$ (or, simply, a \emph{closure}) is a map $c:\mathfrak{P}(S)\longrightarrow\mathfrak{P}(S)$, $I\mapsto I^c$, such that the following properties are satisfied for all $I,J\in\mathfrak{P}(S)$:
\begin{itemize}
\item $I\subseteq I^c$;
\item if $I\subseteq J$, then $I^c\subseteq J^c$;
\item $(I^c)^c=I^c$. 
\end{itemize}
We denote by $\mathfrak{P}(S)^c$ the set of \emph{$c$-closed} subsets of $S$, i.e., the set of the $I$ such that $I=I^c$. For any closure operation $c$ on $S$, one has 
$$
I^c=\bigcap\{J^c\mid J\supseteq I\},
$$
for all $I\in \mathfrak P(S)$.

Given two closure operations $c,d$, we also say that $c\leq d$ if $I^c\subseteq I^d$ for every $I\subseteq S$; equivalently, $c\leq d$ if and only if $\mathfrak P(S)^c\supseteq \mathfrak P(S)^d$. In particular, $c=d$ if and  only if $\mathfrak P(S)^c=\mathfrak P(S)^d$. 

For every closure operation $c$, the map $c_f:\mathfrak{P}(S)\longrightarrow\mathfrak{P}(S)$,
\begin{equation*}
c_f:J\mapsto\bigcup\{I^c\mid I\subseteq J,~|I|<\infty\}
\end{equation*}
is again a closure operation, and $(c_f)_f=c_f$. We say that $c$ is \emph{of finite type} if $c=c_f$. In particular, $c_f\leq c$.

\begin{Prop}\label{finite-type}
Let $S$ be a nonempty set and let $c$ be a closure operation on $S$.
\begin{enumerate}[\rm (1)]
\item $\mathfrak{P}(S)^c$ is closed by arbitrary intersections.
\item $\mathfrak{P}(S)^c$ is closed by up-directed unions if and only if $c$ is of finite type. 
\end{enumerate}
\end{Prop}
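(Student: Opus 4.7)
The plan is to prove both parts by straightforward manipulation of the definitions, using only the three axioms of a closure operation and the explicit formula for $c_f$.

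For part (1), given a family $\{I_\alpha\}_\alpha$ of $c$-closed subsets of $S$, I would set $J:=\bigcap_\alpha I_\alpha$ and simply note that $J\subseteq I_\alpha$ implies $J^c\subseteq I_\alpha^c=I_\alpha$ for every $\alpha$, by the monotonicity and idempotence axioms; intersecting over $\alpha$ gives $J^c\subseteq J$, and combined with extensivity ($J\subseteq J^c$) we obtain $J=J^c$. This is a one-line argument.

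For part (2), the forward implication ``$c=c_f\Longrightarrow$ up-directed unions of $c$-closed sets are $c$-closed'' goes as follows: take an up-directed family $\{I_\alpha\}_\alpha\subseteq\mathfrak{P}(S)^c$, set $J:=\bigcup_\alpha I_\alpha$, and use the formula
\begin{equation*}
J^c=J^{c_f}=\bigcup\{I^c\mid I\subseteq J,\ |I|<\infty\}.
\end{equation*}
For any finite $I\subseteq J$, up-directedness furnishes some $\alpha$ with $I\subseteq I_\alpha$, whence $I^c\subseteq I_\alpha^c=I_\alpha\subseteq J$; taking the union gives $J^c\subseteq J$, which together with extensivity yields $J=J^c$.

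For the reverse implication, I assume $\mathfrak{P}(S)^c$ is closed under up-directed unions and want to show $c\leq c_f$ (the inequality $c_f\leq c$ is already noted in the excerpt). The key observation is that the family
\begin{equation*}
\mathcal{F}:=\{I^c\mid I\subseteq J,\ |I|<\infty\}
\end{equation*}
is up-directed in $\mathfrak{P}(S)^c$: given finite $I_1,I_2\subseteq J$, the finite set $I_1\cup I_2\subseteq J$ satisfies $I_1^c,I_2^c\subseteq(I_1\cup I_2)^c\in\mathcal{F}$. By hypothesis, its union $J^{c_f}=\bigcup\mathcal{F}$ is therefore $c$-closed. Since $J\subseteq J^{c_f}$ (each singleton contributes), applying $c$ yields $J^c\subseteq(J^{c_f})^c=J^{c_f}$, giving $c\leq c_f$ and hence $c=c_f$.

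There is no real obstacle; the only point requiring a moment of care is verifying up-directedness of $\mathcal{F}$ in the converse direction, which is what forces the use of the \emph{finite} subsets of $J$ (unrestricted unions over $\mathfrak{P}(J)$ need not be up-directed in any useful sense). The whole argument is formal and uses no structure on $S$ beyond the closure axioms.
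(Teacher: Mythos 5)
Your proof is correct and follows essentially the same line of reasoning as the paper's. For the forward direction of part (2) you work directly with finite subsets while the paper picks an element $x\in I^c$ and a finite $J$ with $x\in J^c$, and for the reverse direction you argue directly ($c\leq c_f$) where the paper argues contrapositively (exhibiting $I^{c_f}$ as an up-directed union that fails to be $c$-closed) — but in both cases the key observation is the up-directedness of $\{F^c\mid F\subseteq J,\ |F|<\infty\}$, so the arguments are essentially identical.
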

\begin{proof}
The first claim is obvious. If $c$ is not of finite type, let $I$ be such that $I^{c_f}\subsetneq I^c$; then, the family $\mathcal{F}:=\{F^c\mid F\subseteq I,~|F|<\infty\}$ is up-directed, but its union $I^{c_f}$ is not in $\mathfrak{P}(S)^c$.

Suppose now that $c$ is of finite type, let $\mathcal F$ be an up-directed subset of $\mathfrak{P}(S)^c$, and let $I$ be the union of the elements of $\mathcal{F}$. Take $x\in I^c$. Since $c$ is of finite type, there is some finite $J=\{j_1,\ldots,j_n\}\subseteq I$ such that $x\in J^c$. For every $i\in\{1,\ldots,n\}$, there is a set $I_i\in\mathcal{F}$ such that $j_i\in I_i^c$; hence, $x\in J^c\subseteq(I_1\cup\cdots\cup I_n)^c\subseteq I$, since $\mathcal F$ is up-directed. This proves that $I=I^c$, that is, $I\in \mathfrak{P}(S)^c$. The claim is proved.
\end{proof}
\begin{corollary}\label{alg-lat-cl-op}
Let $S$ be a set, and let $\mathcal L\subseteq \mathfrak P(S)$. The following conditions are equivalent.
\begin{enumerate}[\rm (i)]
    \item\label{alg-lat-cl-op:alg} $\mathcal L$ is an algebraic lattice of sets. 
    \item\label{alg-lat-cl-op:clos} There is a (unique) closure operation $c$ on $S$ of finite type such that $\mathcal L=\mathfrak P(S)^c$. 
\end{enumerate}
\end{corollary}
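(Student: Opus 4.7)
The plan is to establish the two directions separately, using Proposition \ref{finite-type} for the easier direction.

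\textbf{Direction \ref{alg-lat-cl-op:clos} $\Longrightarrow$ \ref{alg-lat-cl-op:alg}.} This is essentially a restatement of Proposition \ref{finite-type}. If $c$ is a finite-type closure on $S$, then $\mathfrak{P}(S)^c$ is closed under arbitrary intersections by part (1) and closed under up-directed unions by part (2), so it satisfies (a1) and (a2).

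\textbf{Direction \ref{alg-lat-cl-op:alg} $\Longrightarrow$ \ref{alg-lat-cl-op:clos}.} Given an algebraic lattice $\mathcal{L}$, I would define
\begin{equation*}
c\colon\mathfrak{P}(S)\longrightarrow\mathfrak{P}(S),\qquad I^c:=\bigcap\{J\in\mathcal{L}\mid I\subseteq J\}.
\end{equation*}
The intersection is nonempty since $S\in\mathcal{L}$ by (a1), and $I^c\in\mathcal{L}$ by (a1). It is immediate to check that $c$ is a closure operation (extensivity, monotonicity, idempotence are standard for operators defined as an intersection of a family containing $S$). Moreover $\mathfrak{P}(S)^c=\mathcal{L}$: the inclusion $\mathcal{L}\subseteq\mathfrak{P}(S)^c$ holds because $I\in\mathcal{L}$ appears in its own defining family, while the reverse inclusion is clear since $I^c\in\mathcal{L}$ always.

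\textbf{Finite type.} To see that this $c$ is of finite type, fix $I\subseteq S$ and consider
\begin{equation*}
\mathcal{F}:=\{F^c\mid F\subseteq I,\ |F|<\infty\}\subseteq\mathcal{L}.
\end{equation*}
Given $F_1,F_2$ finite subsets of $I$, the set $(F_1\cup F_2)^c\in\mathcal{F}$ contains both $F_1^c$ and $F_2^c$, so $\mathcal{F}$ is up-directed in $\mathcal{L}$. By (a2), $I^{c_f}=\bigcup\mathcal{F}\in\mathcal{L}$. Since $I\subseteq I^{c_f}$, we get $I^c\subseteq I^{c_f}$, and the reverse inclusion $I^{c_f}\subseteq I^c$ is automatic from monotonicity. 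Hence $c=c_f$.

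\textbf{Uniqueness.} If $c'$ is any finite-type closure with $\mathfrak{P}(S)^{c'}=\mathcal{L}$, then using the identity $I^{c'}=\bigcap\{J^{c'}\mid J\supseteq I\}$ recalled in the text, together with $\mathfrak{P}(S)^{c'}=\mathcal{L}=\mathfrak{P}(S)^c$, one sees $I^{c'}=\bigcap\{J\in\mathcal{L}\mid J\supseteq I\}=I^c$, giving $c=c'$.

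No step here looks to be a serious obstacle; the only care needed is in the finite-type argument, where one must verify that the family $\mathcal{F}$ is genuinely up-directed so that (a2) applies to deliver $I^{c_f}\in\mathcal{L}$.
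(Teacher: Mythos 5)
Your proposal is correct and takes essentially the same route as the paper: both directions define $c$ by $I^c=\bigcap\{J\in\mathcal L\mid I\subseteq J\}$ and identify $\mathcal L$ with $\mathfrak P(S)^c$. The only cosmetic difference is that where the paper simply cites Proposition \ref{finite-type}(2) to conclude that $c$ is of finite type (since $\mathcal L=\mathfrak P(S)^c$ is closed under up-directed unions), you re-derive that implication directly; likewise your uniqueness argument unpacks the remark preceding Proposition \ref{finite-type} rather than citing it.
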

\begin{proof}
\ref{alg-lat-cl-op:clos}$\Longrightarrow$\ref{alg-lat-cl-op:alg} is Proposition \ref{finite-type}. Conversely, let $\mathcal L$ be satisfying \ref{alg-lat-cl-op:alg}. Define a map $c_\mathcal{L}$ by
\begin{equation*}
J^{c_\mathcal{L}}:=\bigcap\{I\in\mathcal{L}\mid J\subseteq I\},
\end{equation*}
for every $J\in\mathfrak{P}(S)$. Then, $c_\mathcal{L}$ is a closure operation on $S$, and since $\mathcal{L}$ is closed by arbitrary intersection we have $\mathcal{L}=\mathfrak{P}(S)^{c_\mathcal{L}}$. Since $\mathcal{L}$ is closed by up-directed unions, applying  Proposition \ref{finite-type} again we see that $c_\mathcal{L}$ is of finite type. The uniqueness of $c_{\mathcal L}$ follows from the discussion before Proposition \ref{finite-type}. 
\end{proof}
Therefore, talking about algebraic lattices of sets is equivalent to talking about finite-type closure operations.

Let now $\mathcal{L}$ be the algebraic lattice associated to the closure operation $c$, i.e., $\mathcal{L}=\mathfrak{P}(S)^c$. We say that $A\in\mathcal{L}$ is \emph{finitely generated} if $A=F^c$ for some finite set $F$. (This is equivalent to the definition in \cite[7.2.12]{di-sc-tr}, by the proof of Corollary \ref{alg-lat-cl-op}.) We denote the set of finitely generated elements of $\mathcal{L}$ as $\mathcal{L}_\fin$.

We call the inverse topology of the coarse lower topology on $\mathcal{L}$ the \emph{Zariski topology}, and we denote it by $\mathcal{L}^\zariski$. Using \cite[7.2.8(ii,d) and 7.2.12(i)]{di-sc-tr}, we see that a basis of open and quasi-compact subspaces for the Zariski topology is  formed by the sets of the type $\{F_1,\ldots, F_n\}^\uparrow$, where $F_1,\ldots, F_n\in \mathcal L$ are finitely generated. In case of spaces of overrings this topology coincides with the Zariski topology generalizing the Zariski-Riemann spaces of valuation rings (hence the name).
\begin{Prop}\label{prop:propL}
Let $\mathcal{L}$ be an algebraic lattice of sets.
\begin{enumerate}[\rm (1)]
\item\label{prop:propL:locwithmax} $\mathcal L^\zariski$ is locally with maximum.
\item\label{prop:propL:dense} $\mathcal L_{\rm fin}$ is dense in $\mathcal L^{\rm cons}$.
\item\label{prop:propL:noeth} $\mathcal{L}_\fin$ is spectral in the Zariski topology if and only if $\mathcal{L}_\fin=\mathcal{L}$.
\end{enumerate}
\end{Prop}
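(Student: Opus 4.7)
All three parts hinge on the description of the Zariski basis given in the paragraph before the statement, together with the key observation (Remark \ref{loc-with-maximum}) that a subset of a spectral space with a maximum is automatically quasi-compact.

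For \ref{prop:propL:locwithmax}, note that since $\{F_1,\ldots,F_n\}^\uparrow=\bigcup_i\{F_i\}^\uparrow$, the principal cones $\{F\}^\uparrow$ with $F\in\mathcal{L}_\fin$ form a basis of $\mathcal{L}^\zariski$. If $I\in\mathcal{L}$ and $F\in\mathcal{L}_\fin$ satisfies $F\subseteq I$, then $\{F\}^\uparrow=\{J\in\mathcal{L}: F\subseteq J\}$ is an open neighborhood of $I$ having $F$ as its maximum with respect to the Zariski specialization order (which is the reverse of $\subseteq$, since $\mathcal{L}^\zariski$ is the inverse of the coarse lower topology on $(\mathcal{L},\subseteq)$). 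Hence $\{\{F\}^\uparrow : F\in\mathcal{L}_\fin,\;F\subseteq I\}$ is a local basis at $I$ of open sets with maximum.

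For \ref{prop:propL:dense}, I would apply Theorem \ref{supremum}\ref{supremum:inf} with $X:=\mathcal{L}^\zariski$ and $Y:=\mathcal{L}_\fin$. Since the Zariski order is the reverse of $\subseteq$, the Zariski infimum of a finite family $F_1^c,\ldots,F_n^c$ of finitely generated elements is their $\subseteq$-supremum, namely $(F_1\cup\cdots\cup F_n)^c$, which is again finitely generated. Therefore $(\mathcal{L}_\fin)_{(f)}$ exists (and coincides with $\mathcal{L}_\fin$), and the theorem yields $(\mathcal{L}_\fin)_{(\infty)}\subseteq\chius^\cons(\mathcal{L}_\fin)$. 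Now for every $I\in\mathcal{L}$, the fact that $c$ is of finite type gives $I=\bigcup\{F^c: F\subseteq I,\;F\text{ finite}\}$, which is the $\subseteq$-supremum (equivalently, the Zariski infimum) of a subfamily of $\mathcal{L}_\fin$; hence $\mathcal{L}\subseteq(\mathcal{L}_\fin)_{(\infty)}\subseteq\chius^\cons(\mathcal{L}_\fin)$, proving density.

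For \ref{prop:propL:noeth}, the implication ($\Leftarrow$) is immediate since $\mathcal{L}$ is spectral. The substance is in ($\Rightarrow$): assuming $\mathcal{L}_\fin$ is spectral in the subspace topology, I would first show that $\mathcal{L}_\fin$ is \emph{always} retrocompact in $\mathcal{L}^\zariski$, independently of the hypothesis. Indeed, for each $F\in\mathcal{L}_\fin$ the set $\{F\}^\uparrow\cap\mathcal{L}_\fin$ has $F$ as its maximum in the (subspace) Zariski order, so the argument of Remark \ref{loc-with-maximum}, run inside the subspace, shows that any open cover admits a one-element subcover, whence $\{F\}^\uparrow\cap\mathcal{L}_\fin$ is quasi-compact. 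Any open and quasi-compact subset of $\mathcal{L}^\zariski$ is a finite union of such principal cones (cover it by its basic subcones and extract a finite subcover), and so its intersection with $\mathcal{L}_\fin$ is a finite union of quasi-compact sets, hence quasi-compact. This gives retrocompactness; combined with the hypothesis of spectrality, Proposition \ref{pro-retro-spec} yields that $\mathcal{L}_\fin$ is proconstructible, i.e.\ closed in $\mathcal{L}^\cons$. Since by \ref{prop:propL:dense} it is dense in $\mathcal{L}^\cons$, we obtain $\mathcal{L}_\fin=\mathcal{L}$. The only nontrivial step is recognizing that retrocompactness is free of charge here; there is no real obstacle once that observation is made.
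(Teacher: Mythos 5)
Your proposal is correct and follows essentially the same route as the paper for all three parts: for \ref{prop:propL:locwithmax}, both arguments observe that the principal cones $\{F\}^\uparrow$ with $F$ finitely generated give a local basis with maximum; for \ref{prop:propL:dense}, both reduce to Theorem~\ref{supremum} by noting that $\mathcal{L}_\fin$ is closed under the operation $(I,J)\mapsto(I\cup J)^c$ and that every $L\in\mathcal{L}$ arises this way from finitely generated pieces; for \ref{prop:propL:noeth}, both establish retrocompactness of $\mathcal{L}_\fin$ via the quasi-compactness of the cones and then invoke Proposition~\ref{pro-retro-spec} together with part \ref{prop:propL:dense}. The only cosmetic difference is in \ref{prop:propL:dense}: you invoke Theorem~\ref{supremum}\ref{supremum:inf} (infima) for the Zariski order, whereas the paper writes $(\mathcal{L}_\fin)_\infty$, which is implicitly the set of suprema for the coarse lower topology; these are the same operation, and since a spectral topology and its inverse have the same constructible topology the two phrasings are interchangeable — your version is arguably the more careful reading given that the proposition is stated for $\mathcal{L}^\zariski$.
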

\begin{proof}
Let $c:\mathfrak P(S)\to \mathfrak P(S)$ be the closure operation of finite type inducing $\mathcal L$ (Corollary \ref{alg-lat-cl-op}). 

\ref{prop:propL:locwithmax} follows from the fact that the set $\{F\}^\uparrow$ has minimum $F$, for any $F\in\mathcal L$.

\ref{prop:propL:dense} $\mathcal L_{\rm fin}$ is closed under finite suprema: indeed, if if $I=F^c$ and $J=G^c$ are finitely generated subsets of $S$ (with $F,G$ finite), then $(I\cup J)^c=(F\cup G)^c$ is again finitely generated. Since $\mathcal{L}$ is closed by up-directed unions, every element of $\mathcal{L}$ is the union of the finitely generated sets which it contains, and thus $\mathcal L=(\mathcal L_{\rm fin})_\infty$. The conclusion follows from Theorem \ref{supremum}. 

\ref{prop:propL:noeth} We claim that $\mathcal{L}_\fin$ is retrocompact in $\mathcal L$, with respect to the Zariski topology. Indeed, take a subbasic open and quasi-compact subset $U:=\{F_1,\ldots, F_n\}^\uparrow$ of $\mathcal L^{\rm zar}$, and let $\mathcal A$ be an open cover of $U\cap \mathcal L_{\rm fin}$. We can assume that $F_1,\ldots,F_n\in\mathcal L_{\rm fin}$, and thus there are open sets $\Omega_i\in \mathcal A$ ($1\leq i \leq n)$) such that $F_i\in \Omega_i$. Then clearly $\{\Omega_1,\ldots,\Omega_n\}$ is a finite subcover for $U\cap \mathcal L_{\rm fin}$.  By Proposition \ref{pro-retro-spec}, it follows that $\mathcal{L}_\fin$ is spectral if and only if it is proconstructible; the claim now follows by part \ref{prop:propL:dense}.
\end{proof}

\begin{remark}\label{oss:closL}
Since $\mathcal{L}$ is itself a partially ordered set, it is possible to define closure operations on $\mathcal{L}$, and the concept of finitely generated elements allows to define the map $c\mapsto c_f$ (and thus closure operations of finite type) by substituting finite sets with finitely generated elements. While this is useful in some contexts (see the case of semistar operations in Section \ref{sect:semistar}, which are classically defined only on submodules), it is not needed from the theoretical point of view.

Indeed, let $c$ be the closure operation of finite type associated to $\mathcal{L}$. If $d:\mathcal{L}\longrightarrow\mathcal{L}$ is a closure operation, then $d\circ c$ is a closure operation on $S$ whose restriction to $\mathcal{L}$ coincides with $d$; conversely, if $e$ is a closure operation on $S$ that restricts to a closure operation on $\mathcal{L}$, then $c\leq e$ and $e=e|_{\mathcal{L}}\circ c$. Moreover, it is not hard to see that $d\circ c$ is of finite type if and only if $d$ is; therefore, talking about closure operations on $\mathcal{L}$ is equivalent to talking about closure operations that are bigger than $c$.
\end{remark}

There are several natural closure operations that are not of finite type: for example, if $S$ is endowed with a topology $\mathcal{T}$, then the closure operator $c$ with respect to $\mathcal{T}$ is a closure operation but, if all points are closed, then $c$ is of finite type if and only if the topology is discrete.% The easiest algebraic example is the \emph{$v$-operation} (also called \emph{divisorial closure}) on a domain $D$, which is defined by the double dual $v:I\mapsto(D:_K(D:_KI))$ (where $K$ is the quotient field of $D$); this is one of the major examples of a star operation, but it is usually not of finite type.

The following proposition shows how to ``control'' the set of $c$-closed sets where $c$ is not of finite type.
\begin{comment}
\begin{Prop}\label{prop:Fstar}
Let $c$ be a closure operation on $S$. Then,
\begin{equation*}
\chius^\cons(\{F^c\mid F\subseteq S, |F|<\infty\})=\chius^\cons(\mathfrak{P}(S)^c)=\mathfrak{P}(S)^{c_f}.
\end{equation*}
\end{Prop}
\begin{proof}
Since $c_f$ is of finite type, $\mathcal{L}:=\mathfrak{P}(S)^{c_f}$ is an algebraic lattice of set and thus it is closed in the constructible topology. The set $\{F^c: |F|<\infty\}$ is exactly $\mathcal{L}_\fin$, and thus it is dense in $\mathcal{L}$ by Proposition \ref{prop:propL}\ref{prop:propL:dense}. The equality $\chius^\cons(\mathcal{L}^c)=\mathcal{L}$ now follows from this and the fact that $\mathcal{L}_\fin\subseteq\mathcal{L}^c\subseteq\mathcal{L}$.
\end{proof}
\end{comment}
\begin{Prop}\label{prop:Fstar}
Let $\mathcal L$ be an algebraic lattice of sets and let $c$ be a closure operation on $\mathcal{L}$. Then,
\begin{equation*}
\chius^\cons(\{F^c\mid F\in\mathcal{L}_{\mathrm{fin}}\})=\chius^\cons(\mathcal{L}^c)=\mathcal{L}^{c_f}.
\end{equation*}
\end{Prop}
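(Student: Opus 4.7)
The plan is to sandwich the two constructible closures between an obvious lower bound and the spectral set $\mathcal{L}^{c_f}$, which I will show is proconstructible and has $\{F^c\mid F\in\mathcal{L}_{\mathrm{fin}}\}$ as a dense subspace. All three inclusions will then collapse to equalities.

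The first step is to recognize that $\mathcal{L}^{c_f}$ is itself an algebraic lattice of sets in the sense of Section \ref{sect:alglatt}. Writing $d$ for the finite-type closure on $\mathfrak{P}(S)$ associated to $\mathcal{L}$ via Corollary \ref{alg-lat-cl-op}, Remark \ref{oss:closL} extends $c$ to $\widetilde{c}:=c\circ d$ on all of $\mathfrak{P}(S)$, and one checks directly from the definitions that the restriction of $(\widetilde{c})_f$ to $\mathcal{L}$ agrees with $c_f$. Since $\widetilde{c}\geq d$, also $(\widetilde{c})_f\geq d_f=d$, so the $(\widetilde{c})_f$-closed sets all lie in $\mathcal{L}$, giving $\mathfrak{P}(S)^{(\widetilde{c})_f}=\mathcal{L}^{c_f}$. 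By Proposition \ref{finite-type}, $\mathcal{L}^{c_f}$ is an algebraic lattice of sets, and hence proconstructible both in $\mathfrak{P}(S)$ and a fortiori in $\mathcal{L}$.

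Next I record the chain of inclusions
\begin{equation*}
\{F^c\mid F\in\mathcal{L}_{\mathrm{fin}}\}\subseteq\mathcal{L}^c\subseteq\mathcal{L}^{c_f}.
\end{equation*}
The first is by definition of $c$ acting on elements of $\mathcal{L}$, while the second follows from $c_f\leq c$: if $I^c=I$ then $I\subseteq I^{c_f}\subseteq I^c=I$, so $I\in\mathcal{L}^{c_f}$. Taking constructible closures and using that $\mathcal{L}^{c_f}$ is proconstructible,
\begin{equation*}
\chius^\cons\bigl(\{F^c\mid F\in\mathcal{L}_{\mathrm{fin}}\}\bigr)\subseteq\chius^\cons(\mathcal{L}^c)\subseteq\mathcal{L}^{c_f}.
\end{equation*}

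It therefore suffices to prove the reverse inclusion $\mathcal{L}^{c_f}\subseteq\chius^\cons(\{F^c\mid F\in\mathcal{L}_{\mathrm{fin}}\})$, and for this I apply Proposition \ref{prop:propL}\ref{prop:propL:dense} to the algebraic lattice $\mathcal{L}^{c_f}$. The key identification is that the finitely generated elements of $\mathcal{L}^{c_f}$ are exactly $\{F^c\mid F\in\mathcal{L}_{\mathrm{fin}}\}$: for any $F\in\mathcal{L}_{\mathrm{fin}}$ the definition of $c_f$ (together with monotonicity) gives $F^{c_f}=F^c$, so these elements are $c_f$-closed and finitely generated in $\mathcal{L}^{c_f}$, and conversely every finitely generated element of $\mathcal{L}^{c_f}$ has this form. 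Density in the constructible topology then yields the missing inclusion, and the squeeze closes the argument.

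The only genuinely delicate point is the translation in the first paragraph — verifying that the ``$c_f$'' built internally to $\mathcal{L}$ matches the finite-type modification of the extension $\widetilde{c}$ on $\mathfrak{P}(S)$, so that $\mathcal{L}^{c_f}$ genuinely qualifies as an algebraic lattice of sets and can be fed into Propositions \ref{pro-retro-spec} and \ref{prop:propL}. Everything else is a formal manipulation of the inclusions $\{F^c\mid F\in\mathcal{L}_{\mathrm{fin}}\}\subseteq\mathcal{L}^c\subseteq\mathcal{L}^{c_f}$ combined with the density statement.
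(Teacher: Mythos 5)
Your proof is correct and follows essentially the same route as the paper's: both recognize $\mathcal{L}^{c_f}$ as an algebraic lattice of sets, identify $(\mathcal{L}^{c_f})_{\mathrm{fin}}$ with $\{F^c\mid F\in\mathcal{L}_{\mathrm{fin}}\}$, invoke Proposition \ref{prop:propL}(2) for density, and then squeeze via $\{F^c\mid F\in\mathcal{L}_{\mathrm{fin}}\}\subseteq\mathcal{L}^c\subseteq\mathcal{L}^{c_f}$. The one place you go further than the paper is the first paragraph, where you explicitly unwind Remark \ref{oss:closL} to check that the extension $(\widetilde{c})_f$ on $\mathfrak{P}(S)$ cuts out exactly $\mathcal{L}^{c_f}$; the paper asserts this more tersely, so your extra verification is welcome but not a divergence in method.
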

\begin{proof}
Since $c_f$ is of finite type,  $\mathcal{L}^{c_f}$ is an algebraic lattice of sets and  its set $(\mathcal L^{c_f})_{\rm fin}$ of finitely generated elements is $\{F^c\mid F\in\mathcal{L}_{\mathrm{fin}}\}$. By Proposition \ref{prop:propL}\ref{prop:propL:dense}, $(\mathcal L^{c_f})_{\rm fin}$ is dense in $\mathcal{L}^{c_f}$. Moreover, $\mathcal L^{c_f}$ is proconstructible in $\mathcal L$ (since both of them are proconstructible in $\mathfrak P(S)$, as remarked at the beginning of this section). 
Thus $\chius^\cons((\mathcal L^{c_f})_{\rm fin})=\mathcal{L}^{c_f}$.

The equality $\chius^\cons(\mathcal{L}^c)=\mathcal{L}^{c_f}$ now follows from this and the fact that $(\mathcal L^{c_f})_{\rm fin}\subseteq\mathcal{L}^c\subseteq\mathcal{L}^{c_f}$.
\end{proof}

The following result will be useful in the applications.
\begin{Prop}\label{isolated-fg}
Let $S$ be a set and let $\mathcal L\subseteq \mathfrak P(S)$ be an algebraic lattice of sets. Let $M\in \mathcal L$ be maximal in $\mathcal{L}\setminus\{S\}$. Then $M$ is isolated in $\mathcal L^{\rm cons}$ if and only if $M$ is finitely generated. 
\end{Prop}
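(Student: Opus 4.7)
The plan is to prove the two directions separately, with the reverse implication following immediately from the density statement in Proposition \ref{prop:propL}(2) and the forward implication relying on the description of the basis of $\mathcal L^{\zariski}$ together with Hausdorffness of the constructible topology.

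For the ``isolated $\Rightarrow$ finitely generated'' direction, I would argue as follows. Suppose $\{M\}$ is open in $\mathcal L^{\cons}$. Part \ref{prop:propL:dense} of Proposition \ref{prop:propL} shows that $\mathcal L_{\fin}$ is dense in $\mathcal L^{\cons}$, so the nonempty open set $\{M\}$ must intersect $\mathcal L_{\fin}$. Hence $M\in\mathcal L_{\fin}$, i.e., $M$ is finitely generated. This half of the argument is essentially a one-line invocation of density.

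For the ``finitely generated $\Rightarrow$ isolated'' direction, suppose $M=F^c$ for some finite $F\subseteq S$, so that $M\in\mathcal L_{\fin}$. By the description of the Zariski topology recalled right before Proposition \ref{prop:propL}, the set $\{M\}^{\uparrow}=\{N\in\mathcal L\mid M\subseteq N\}$ is a basic open and quasi-compact subspace of $\mathcal L^{\zariski}$; being open and quasi-compact in a spectral topology, it is clopen in $\mathcal L^{\cons}$. Now the maximality of $M$ in $\mathcal L\setminus\{S\}$ forces $\{M\}^{\uparrow}=\{M,S\}$. Since $\mathcal L^{\cons}$ is Hausdorff, the singleton $\{S\}$ is closed, and therefore
\begin{equation*}
\{M\}=\{M\}^{\uparrow}\setminus\{S\}
\end{equation*}
is open in $\mathcal L^{\cons}$, proving that $M$ is isolated.

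The only mild subtlety I anticipate is checking that the basis element $\{F^c\}^{\uparrow}$ really equals $\{M\}^{\uparrow}$ as subsets of $\mathcal L$: this is immediate since for any $c$-closed $N$, the containment $F\subseteq N$ is equivalent to $F^c\subseteq N^c=N$. Beyond that bookkeeping, no further obstacles arise; the full argument amounts to combining the density of $\mathcal L_{\fin}$ with the fact that a maximal non-top element admits, once it is finitely generated, a two-point clopen up-set whose other point $S$ is already closed in the Hausdorff topology $\mathcal L^{\cons}$.
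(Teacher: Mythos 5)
Your proof is correct and follows essentially the same route as the paper's: the forward implication is just the contrapositive of the paper's appeal to the density of $\mathcal L_{\fin}$ in $\mathcal L^{\cons}$ (Proposition \ref{prop:propL}(2)), and the reverse implication is the same observation that $\{M\}^{\uparrow}=\{M,S\}$ is (cl)open in the constructible topology while $\{S\}$ is closed by Hausdorffness, so $\{M\}$ is open. The extra bookkeeping you flag (that $\{F^c\}^{\uparrow}=\{M\}^{\uparrow}$ inside $\mathcal L$) is a harmless elaboration of what the paper leaves implicit.
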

\begin{proof}
If $M$ is finitely generated, then $\{M\}^\uparrow=\{M,S\}$ is open in $\mathcal L^{\rm zar}$ and, a fortiori, in $\mathcal L^{\rm cons}$. Moreover, $\{S\}$ is proconstructible (the constructible topology is Hausdorff). Thus $\{M\}=\{M\}^\uparrow\cap (\mathcal L\setminus\{S\})$ is open. 

Conversely, if $M$ is not finitely generated it cannot be isolated in $\mathcal L^{\rm cons}$, in view of Proposition \ref{prop:propL}\ref{prop:propL:dense}. 
\end{proof}

Let $\insclos_f(S):=\insclos_f(\mathfrak P(S))$ be the set of closure operations of finite type on $\mathfrak{P}(S)$. For every $I\subseteq S$ and every $x\in S$, let
\begin{equation*}
V(I,x):=\{c\in\insclos_f(S)\mid x\in I^c\}.
\end{equation*}
We call the topology generated by the $V(I,x)$ on $\insclos_f(S)$, where $I$ ranges among the subsets of $S$ and $x$ ranges in $S$, the \emph{Zariski topology} of $\insclos_f(S)$. The next proof essentially follows the proof of \cite[Theorem 2.13]{FiSp}.
\begin{Prop}\label{prop:closf-spectral}
The space $\insclos_f(S)$, endowed with the Zariski topology, is a spectral space.
\end{Prop}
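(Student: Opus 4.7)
The plan is to realize $\insclos_f(S)$ as a proconstructible subspace of a Sierpi\'nski cube, exploiting the fact that a finite-type closure operation is completely determined by its restriction to finite subsets. First, since $I^c=\bigcup\{F^c : F\subseteq I,\ |F|<\infty\}$ for every $c\in\insclos_f(S)$, one has $V(I,x)=\bigcup\{V(F,x) : F\subseteq I,\ |F|<\infty\}$, and so $\{V(F,x) : F\in\mathfrak{P}_{\fin}(S),\ x\in S\}$ is already a subbasis of open sets for the Zariski topology on $\insclos_f(S)$.

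Endow $\{0,1\}$ with the Sierpi\'nski topology (with $\{1\}$ open and $\{0\}$ closed), and let $Y:=\{0,1\}^{\mathfrak{P}_{\fin}(S)\times S}$ carry the product topology. Then $Y$ is a spectral space (an arbitrary product of spectral spaces is spectral, by \cite{ho}), and its constructible topology is the product of the discrete topologies on the factors, hence compact Hausdorff. Define $\Phi:\insclos_f(S)\to Y$ by $\Phi(c)_{(F,x)}:=1$ if $x\in F^c$ and $0$ otherwise. Finite-type gives injectivity, and $\Phi^{-1}(\{a\in Y : a_{(F,x)}=1\})=V(F,x)$, so $\Phi$ is a topological embedding onto its image for the subbasis above.

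It remains to show that $\Phi(\insclos_f(S))$ is closed in $Y^{\cons}$, which I will do by characterizing its image as the set of $a=(a_{F,x})\in Y$ satisfying, for all finite $F,G\subseteq S$ and all $x\in S$: (i) $a_{F,x}=1$ whenever $x\in F$; (ii) if $F_1\subseteq F_2$ and $a_{F_1,x}=1$, then $a_{F_2,x}=1$; (iii) if $a_{F,y}=1$ for every $y\in G$ and $a_{G,x}=1$, then $a_{F,x}=1$. Each condition constrains only finitely many coordinates of $Y$ and so cuts out a clopen subset of $Y^{\cons}$; their intersection is closed. Every $\Phi(c)$ satisfies (i)--(iii), with (iii) encoding the implication ``$G\subseteq F^c\Rightarrow G^c\subseteq F^c$'' coming from idempotence. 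Conversely, given such an $a$, define $c$ on $\mathfrak{P}(S)$ by $I^c:=\{x\in S : a_{F,x}=1 \text{ for some finite } F\subseteq I\}$; using (i)--(ii), this is reflexive, monotone, and of finite type, while idempotence $(I^c)^c=I^c$ follows from (iii) applied after merging finitely many finite witnesses via (ii). Moreover $\Phi(c)=a$, again by (ii). Hence $\Phi(\insclos_f(S))$ is proconstructible in the spectral space $Y$, so is spectral as a subspace (Proposition \ref{pro-retro-spec}), and $\Phi$ transports this structure back to $\insclos_f(S)$.

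The main technical obstacle is locating the correct \emph{finitary} reformulation of idempotence, namely condition (iii): the axiom $(I^c)^c=I^c$ is a priori a condition on arbitrary subsets, and some care is needed to ensure it can be encoded using only finite $F$ and $G$, while still being strong enough to reconstruct $c$ from $a$. Once (iii) is in place, the rest is a standard combination of Hochster's product theorem with the characterization of proconstructible subspaces.
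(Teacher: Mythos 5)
Your proof is correct, but it takes a genuinely different route from the paper's. The paper argues via the ultrafilter criterion of \cite[Corollary 3.3]{Fi}: it observes that each subbasic set $V(I,x)$ with $I$ finite has a \emph{minimum} $c(I,x)$, that every subset of $\insclos_f(S)$ has a \emph{supremum}, and then, for an arbitrary ultrafilter $\mathscr{U}$, shows that $c:=\sup\{c(I,x)\mid V(I,x)\in\mathscr{U}\}$ is the ultrafilter limit point of $\mathscr{U}$ with respect to the subbasis. Your proof instead follows the classical Hochster route: embed $\insclos_f(S)$ into the Sierpi\'nski cube $Y=\{0,1\}^{\mathfrak{P}_{\fin}(S)\times S}$ and identify the image as a proconstructible subset by cutting it out with clopen conditions. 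The two arguments have complementary virtues. The paper's approach is uniform with the authors' other results (it directly produces generic points and is the same toolkit used in \cite{FiSp}), and it makes the order structure of $\insclos_f(S)$ explicit, which is useful later. Your approach is self-contained modulo two standard facts (products of spectral spaces are spectral, proconstructible subsets are spectral) and avoids ultrafilters entirely; it also makes transparent exactly where finite-typeness is needed, namely in your finitary reformulation of idempotence as condition (iii), which you correctly close under (ii) by merging the finite witnesses $F_y$ into $F=\bigcup_{y\in G}F_y$. Both proofs are valid; yours is arguably the more elementary of the two.

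One small point to tidy: the claim that arbitrary products of spectral spaces are spectral is correct, but it is not stated in \cite{ho} in the form you cite; a cleaner reference is \cite[Theorem 2.3.1]{di-sc-tr}. This does not affect the substance of the argument.
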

\begin{proof}
Let $\mathcal{S}$ be the set of the $V(I,x)$, as $I$ ranges among the \emph{finite} subsets of $S$. Then, $\mathcal{S}$ is a subbasis of the Zariski topology on $\insclos_f(S)$. Every such $V(I,x)$ has a minimum, namely the map $c(I,x)$ defined by
\begin{equation*}
J^{c(I,x)}:=\begin{cases}
I\cup\{x\} & \text{if~}J=I,\\
J & \text{otherwise}.
\end{cases}
\end{equation*}
(The fact that $c(I,x)$ is of finite type follows from the fact that $I$ is finite.) Moreover, if  $\Lambda\subseteq\insclos_f(S)$, then $\Lambda$ has a maximum, namely the closure defined by
\begin{equation*}
J\mapsto \bigcup\{J^{c_1\circ\cdots\circ c_n}\mid c_1,\ldots,c_n\in\Lambda\};
\end{equation*}
the fact that this map is a closure follows exactly as in \cite[p.1628]{anderson-examples}.

Let now $\mathscr{U}$ be an ultrafilter on $X:=\insclos_f(S)$. Let
\begin{equation*}
c:=\sup\{c(I,x)\mid V(I,x)\in\mathscr{U}\}.
\end{equation*}
We claim that $c\in V(I,x)$ if and only if $V(I,x)\in\mathscr{U}$. Indeed, if $V(I,x)\in\mathscr{U}$ then $c(I,x)\leq c$, so that $x\in I^{c(I,x)}\subseteq I^c$ and $c\in V(I,x)$.

Conversely, suppose $c\in V(I,x)$. Then, by definition and by the first paragraph of the proof, there are finite subsets $J_1,\ldots,J_k$ of $S$ and $x_1,\ldots,x_k\in S$ such that $V(J_i,x_i)\in\mathscr{U}$ for every $i$ and $x\in I^{c(J_1,x_1)\circ\cdots\circ c(J_k,x_k)}$. Let $U:=\bigcap_iV(J_i,x_i)$, and take $d\in U$: then, $d\geq c(J_i,x_i)$ for every $i$, and thus
\begin{equation*}
x\in I^{c(J_1,x_1)\circ\cdots\circ c(J_k,x_k)}\subseteq I^{d\circ\cdots\circ d}=I^d,
\end{equation*}
that is, $d\in V(I,x)$; hence, $U\subseteq V(I,x)$. Since $\mathscr{U}$ is an ultrafilter, $U\in\mathscr{U}$ and thus $V(I,x)\in\mathscr{U}$, as claimed.

Hence, $c$ is the ultrafilter limit point of $\mathscr{U}$ with respect to $\mathcal{S}$. Since the Zariski topology is clearly $T_0$, by \cite[Corollary 3.3]{Fi} it follows that $\insclos_f(S)$ is a spectral space.
\end{proof}
Let $\mathcal L\subseteq \mathfrak P(S)$ be an algebraic lattice of sets and  $\insclos_f(\mathcal L)$ be  the set of finite-type closure operations on $\mathcal L$. The Zariski topology can be defined on $\insclos_f(\mathcal L)$ by declaring as subbasic open sets the sets $V_{\mathcal L}(L,x):=\{r\in \insclos_f(\mathcal L)\mid x\in L^r\}$, where $L\in\mathcal L$, $x\in S$. 
\begin{corollary}\label{cor:closL}
Let $\mathcal{L}$ be an algebraic lattice of sets, and let $c\in\insclos_f(S)$ be the finite-type closure operation associated to $\mathcal{L}$. Then, $\insclos_f(\mathcal L)$ (endowed with the Zariski topology) is a spectral space, and the map
\begin{equation*}
\begin{aligned}
\lambda_\mathcal{L}\colon\insclos_f(\mathcal{L})\longrightarrow & \insclos_f(S)\\
r\longmapsto & r\circ c
\end{aligned}
\end{equation*}
is a spectral map and a topological embedding. In particular, $\insclos_f(\mathcal L)$ is proconstructible in $\insclos_f(S)$.
\end{corollary}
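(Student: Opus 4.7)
The plan is to identify $\insclos_f(\mathcal{L})$ with a proconstructible subset of the spectral space $\insclos_f(S)$ via $\lambda_\mathcal{L}$, using the algebraic description of the image given in Remark \ref{oss:closL}, and then to read off all three assertions of the corollary from this identification.

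First I would verify that $\lambda_\mathcal{L}$ is a bijection onto $\mathrm{Im}(\lambda_\mathcal{L})=\{c'\in\insclos_f(S)\mid c'\geq c\}$. By Remark \ref{oss:closL} the composite $r\circ c$ is a finite-type closure on $S$, and injectivity is immediate because $c$ restricts to the identity on $\mathcal{L}$. Conversely, given a finite-type $c'\geq c$, a brief computation shows $L^{c'}\in\mathcal{L}$ for every $L\in\mathcal{L}$ (applying $c\leq c'$ to $I=L^{c'}$ yields $(L^{c'})^c\subseteq(L^{c'})^{c'}=L^{c'}$), so $c'$ restricts to some $r\in\insclos_f(\mathcal{L})$ with $\lambda_\mathcal{L}(r)=c'$, again by Remark \ref{oss:closL}. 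To see that $\lambda_\mathcal{L}$ is a topological embedding I would match subbases: for any $L\in\mathcal{L}$ and $x\in S$, using the identity $I^{c'}=(I^c)^{c'}$ valid for elements of the image, one checks that $\lambda_\mathcal{L}^{-1}(V(L,x))=V_\mathcal{L}(L,x)$, while conversely $V(I,x)\cap\mathrm{Im}(\lambda_\mathcal{L})=V(I^c,x)\cap\mathrm{Im}(\lambda_\mathcal{L})$ with $I^c\in\mathcal{L}$.

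The key step, and the main obstacle, is to show that $\mathrm{Im}(\lambda_\mathcal{L})$ is proconstructible. Exploiting that both $c$ and any candidate $c'$ are of finite type, the inequality $c\leq c'$ is equivalent to requiring $x\in I^{c'}$ for every \emph{finite} $I\subseteq S$ and every $x\in I^c$; hence
\begin{equation*}
\mathrm{Im}(\lambda_\mathcal{L})=\bigcap_{\substack{I\subseteq S\text{ finite}\\ x\in I^c}}V(I,x).
\end{equation*}
The proof of Proposition \ref{prop:closf-spectral} (via \cite[Corollary 3.3]{Fi} applied to the subbasis of the $V(I,x)$ with $I$ finite) shows that each such $V(I,x)$ is open and quasi-compact in $\insclos_f(S)$, and so clopen in the constructible topology; the displayed intersection is therefore constructible-closed, i.e., proconstructible.

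By Proposition \ref{pro-retro-spec} the proconstructible image is itself a spectral space, so via the homeomorphism of the previous paragraph $\insclos_f(\mathcal{L})$ is spectral as well. The inclusion of a proconstructible subset into a spectral space is a spectral map, and precomposing with this homeomorphism shows that $\lambda_\mathcal{L}$ is spectral. The ``in particular'' clause is just a restatement of the proconstructibility already established. The only delicate point in this whole argument is the use of finite-type-ness on both sides of $c\leq c'$ to cut the image out by open quasi-compact subsets indexed by \emph{finite} $I$; if one attempts to work with arbitrary $I\subseteq S$, the resulting subbasic sets need no longer be quasi-compact, and proconstructibility is lost.
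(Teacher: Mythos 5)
Your proof is correct and follows essentially the same route as the paper's. The paper identifies the image of $\lambda_\mathcal{L}$ as $\{c\}^\uparrow$, notes that $\lambda_\mathcal{L}$ is a topological embedding, and observes that $\{c\}^\uparrow$ is closed in the inverse topology of $\insclos_f(S)$ (being precisely $\chius^\inverse(\{c\})$), hence proconstructible; your explicit description of $\{c\}^\uparrow$ as $\bigcap_{I\text{ finite},\, x\in I^c}V(I,x)$ is just an unwinding of this, since the $V(I,x)$ with $I$ finite are exactly the subbasic open quasi-compact sets containing $c$, and being closed in the inverse topology means by definition being such an intersection. (A small remark on justification: the quasi-compactness of each $V(I,x)$ is perhaps most directly seen from Remark \ref{loc-with-maximum}, since $c(I,x)$ is a maximum of $V(I,x)$ for the specialization order.)
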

\begin{proof}
By Remark \ref{oss:closL}, the mapping $\lambda_\mathcal{L}$ is injective, with image $\{c\}^\uparrow$. It is straightforward to see that $\lambda_\mathcal{L}$ is a topological embedding; the claims now follow by noting that $\{c\}^\uparrow$ is closed in the inverse topology of $\insclos_f(S)$ (and thus, in particular, it is proconstructible).
\end{proof}

\begin{comment}
Let $c\in \insclos_f(S)$ be associated to $\mathcal L$ (Corollary \ref{alg-lat-cl-op}). By Remark \ref{oss:closL}, the mapping $\lambda: \insclos_f(\mathcal L)\to \insclos_f(S)$, $r\mapsto r\circ c$ is injective and it is clearly a topological embedding, with respect to the Zariski topology. Moreover, $\lambda(\insclos_f(\mathcal L))=\{c\}^\uparrow$ is closed in the inverse topology of the Zariski topology and, a fortiori, it is proconstructible in the spectral space $\insclos_f(S)$. Then the following corollary is now clear. 
\begin{corollary}\label{cor:closL}
Let $\mathcal{L}\subseteq\mathfrak{P}(S)$ be an algebraic lattice of sets. Then, the set $\insclos_f(\mathcal{L})$ is a spectral space, with respect to the Zariski topology. 
%of finite-type closure operations on $\mathcal{L}$ is a proconstructible subset of $\insclos_f(S)$, with respect to the Zariski topology.
\end{corollary}
%\begin{proof}
%Let $c$ be the finite-type closure operation on $S$ associated to $\mathcal{L}$. By Remark \ref{oss:closL}, $\insclos_f(\mathcal{L})=\{c\}^\uparrow$, which is closed in the inverse topology of the Zariski topology; in particular, it is proconstructible.
%\end{proof}
\end{comment}

\begin{remark}
Let $S$ be a set. Let $\mathscr{C}$ be the family of all subsets $\mathcal{L}\subseteq\mathfrak{P}(S)$ that are algebraic lattices of sets: by Proposition \ref{alg-lat-cl-op}, the elements of $\mathscr{C}$ correspond to the finite-type closure operations on $S$, and thus there is a natural bijective correspondence between $\mathscr{C}$ and $\insclos_f(S)$. A natural question is whether $\mathscr{C}$ \emph{itself} is an algebraic lattice of sets; in this case, Proposition \ref{prop:closf-spectral} would be a simple consequence of the general theory.

It is not hard to see that $\mathscr{C}$ is closed by arbitrary intersections; therefore, $\mathscr{C}$ defines a closure operation on $\mathfrak{P}(S)$, i.e., a map $\mathfrak{P}(\mathfrak{P}(S))\longrightarrow\mathfrak{P}(\mathfrak{P}(S))$ sending $\mathcal{L}$ to the smallest algebraic lattice of sets containing $\mathcal{L}$. In terms of closure operations, this is equivalent to saying that the supremum of a family of finite-type operations is still of finite type.

However, $\mathscr{C}$ is usually not closed by up-directed unions. Indeed, let $\mathscr{F}$ be an up-directed subset of $\mathscr{C}$. Then, the set $\mathfrak{A}:=\{c_\mathcal{L}\mid\mathcal{L}\in\mathscr{F}\}$ is a subset of $\insclos_f(S)$, which is down-directed in the natural order of $\insclos_f(S)$. The union of $\mathscr{F}$ corresponds to the infimum of $\mathfrak{A}$ \emph{in the set of all closures}, and this closure does not need to be of finite type. For example, let $\Delta$ be a non-quasi-compact subset of $\Spec(D)$ (for some integral domain $D$) and let $\mathfrak{A}$ be the set of all closures of the type $I\mapsto\bigcap\{ID_P\mid P\in\Lambda\}$, where $\Lambda$ ranges among the \emph{finite} subsets of $\Delta$. Then, $\mathfrak{A}$ is a down-directed set of finite-type closure operations (and thus correspond to an up-directed family $\mathscr{F}\subseteq\mathscr{C}$) but the infimum $s_\Delta:I\mapsto\bigcap\{ID_P\mid P\in\Delta\}$ is not of finite type (by \cite[Corollary 4.4]{FiSp}; see also Section \ref{sect:semistar} below).
\end{remark}

\section{Spaces of modules and ideals}\label{sect:submodules}
Let $R$ be any commutative ring, and let $M$ be an $R$-module. The set $\inssubmod_R(M):=\inssubmod(M)$ of the $R$-submodules of $M$ is an algebraic lattice of sets; its associated closure operation is the one sending $F\subseteq M$ to the submodule generated by $F$. In particular, the finitely generated elements of $\inssubmod(M)$ are exactly the finitely generated submodules.

The Zariski topology on $\inssubmod(M)$ is, therefore, the topology generated by the sets
\begin{equation*}
\B(x_1,\ldots,x_n):=\{N\in\inssubmod(M)\mid x_1,\ldots,x_n\in N\},
\end{equation*}
as $x_1,\ldots,x_n$ range in $M$. All the results of Section \ref{sect:alglatt} apply to $\inssubmod(M)$: thus, $\inssubmod(M)$ is a spectral space, it is locally with maximum, and the order induced by the Zariski topology is the reverse inclusion. 

\begin{Prop}\label{prop:fg-dense}
Let $M$ be an $R$-module.
\begin{enumerate}[\rm (1)]
    \item The set $\inssubfg(M)$ of finitely generated submodules of $M$ is dense in $\inssubmod(M)$, with respect to the constructible topology.
    \item $\inssubfg(M)$ is spectral if and only if $M$ is a Noetherian $R$-module.
\end{enumerate}
\end{Prop}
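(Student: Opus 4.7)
The plan is to recognize this proposition as an immediate specialization of Proposition \ref{prop:propL} to the algebraic lattice $\mathcal{L}:=\inssubmod(M)$. The paragraph preceding the statement already identifies $\inssubmod(M)$ as an algebraic lattice of sets whose associated finite-type closure operation $c$ sends $F\subseteq M$ to the submodule of $M$ generated by $F$. Consequently, the set $\mathcal{L}_{\mathrm{fin}}$ of finitely generated elements of this lattice (in the sense of Section \ref{sect:alglatt}) coincides with $\inssubfg(M)$. Once this identification is in place, both assertions reduce to invocations of results already proved.

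For part (1), I would simply apply Proposition \ref{prop:propL}\ref{prop:propL:dense} to $\mathcal{L}=\inssubmod(M)$ and conclude that $\inssubfg(M)=\mathcal{L}_{\mathrm{fin}}$ is dense in $\inssubmod(M)^{\mathrm{cons}}$.

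For part (2), I would invoke Proposition \ref{prop:propL}\ref{prop:propL:noeth}: $\inssubfg(M)$ is spectral in the Zariski topology if and only if $\mathcal{L}_{\mathrm{fin}}=\mathcal{L}$, i.e., if and only if every submodule of $M$ is finitely generated. By definition, this is exactly the condition that $M$ be a Noetherian $R$-module.

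There is essentially no obstacle; the real content is packaged in Proposition \ref{prop:propL} in the preceding section, and the only work is the translation of the abstract lattice-theoretic language (finitely generated elements of $\mathcal{L}$, the Zariski topology on $\mathcal{L}$) into the classical module-theoretic language (finitely generated submodules, the topology with subbasic opens $\B(x_1,\ldots,x_n)$), which has already been carried out in the discussion preceding the proposition.
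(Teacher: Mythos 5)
Your proof is correct and is exactly the paper's own argument: the paper's proof of this proposition is the single sentence stating that both claims are translations of parts (2) and (3) of Proposition~\ref{prop:propL} to $\mathcal{L}=\inssubmod(M)$, with $\mathcal{L}_{\mathrm{fin}}=\inssubfg(M)$. Your only additions are spelling out this identification and the elementary observation that $\mathcal{L}_{\mathrm{fin}}=\mathcal{L}$ is the definition of $M$ being Noetherian.
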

\begin{proof}
The two claims are the translation of points \ref{prop:propL:dense} and \ref{prop:propL:noeth} of Proposition \ref{prop:propL} to this setting.
%
%For the second one, note first that if $M$ is a Noetherian module, then $\inssubfg(M)=\inssubmod(M)$ and thus $\inssubfg(M)$ is spectral.
%
%Conversely, suppose $\inssubfg(M)$ is spectral. The set $\inssubfg(M)$ is retrocompact, since the minimum of every subbasic open set $\B(x_1,\ldots,x_n)$ belongs to $\inssubfg(M)$ and thus every $\B(x_1,\ldots,x_n)\cap\inssubfg(M)$ is quasi-compact. Therefore, by Proposition \ref{pro-retro-spec}, $\inssubfg(M)$ is proconstructible in $\inssubmod(M)$; however, by the previous proposition $\inssubfg(M)$ is dense in $\inssubmod(M)^\cons$, and thus we must have $\inssubfg(M)=\inssubmod(M)$. By definition, $M$ must be a Noetherian $R$-module, as claimed.
\end{proof}

Given a ring $R$, we denote by $\insid(R):=\inssubmod_R(R)$ the set of ideals of a ring $R$, and by $\insid^\bullet(R)$ the set of proper ideals of $R$. Endowed with the Zariski topology, both are spectral spaces, and $\insid^\bullet(R)$ is also proconstructible in $\insid(R)$, since it is the complement of the basic (quasi-compact) open set $\B(1)$. Note that the topology induced on $\Spec(R)$ by the Zariski topology on $\insid(R)$ is \emph{not} the Zariski topology of the spectrum, but rather its inverse topology. 

If $D$ is a domain, we denote by $\mathcal{F}(D)$ the set of $D$-submodules of its quotient field; if $c$ is a closure operation on $\mathcal{F}(D)$, we denote the set of $c$-closed modules by $\mathcal{F}(D)^c$.

%Any ring homomorphism $f:R\longrightarrow R'$ induces a map $f^\sharp:\insid(R')\longrightarrow\insid(R)$, given by $f^\sharp(I)=f^{-1}(I)$, for every $I\in\insid(M')$; it is easy to see that $f^\sharp$ is spectral when $\insid(R)$ and $\insid(R')$ are endowed with the Zariski topology. Therefore, $f^\sharp$ is continuous and closed when $\insid(R)$ and $\insid(R')$ are endowed with the constructible topology.

An immediate consequence of Proposition \ref{isolated-fg} is the following.
\begin{Prop}\label{prop:maximal-isolated}
Let $M$ be a maximal ideal of $R$. Then, $M$ is isolated in $\insid(R)^\cons$ if and only if $M$ is finitely generated.
\end{Prop}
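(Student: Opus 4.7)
The plan is to apply Proposition \ref{isolated-fg} directly, taking $S=R$ and $\mathcal{L}=\insid(R)$, the algebraic lattice of all ideals of $R$ (its associated finite-type closure operation sends a subset of $R$ to the ideal it generates, so the finitely generated elements of $\mathcal{L}$ are exactly the finitely generated ideals). First I would observe that the top element of this lattice is $R$ itself, so a maximal ideal $M$ of $R$ is precisely a maximal element of $\mathcal{L}\setminus\{R\}$ in the inclusion order; this is the only hypothesis needed before invoking Proposition \ref{isolated-fg}.

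Once this identification is made, Proposition \ref{isolated-fg} yields both directions simultaneously: $M$ is isolated in $\insid(R)^{\cons}$ if and only if $M$ is finitely generated as an element of $\mathcal{L}$, i.e., finitely generated as an ideal. I expect no obstacle here beyond the bookkeeping of matching the notation ($M$ playing the dual role of the maximal ideal and of the distinguished element in the abstract lattice statement), so the proof is essentially one sentence.
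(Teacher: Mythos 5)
Your proof is correct and is exactly the paper's argument: the paper also derives Proposition \ref{prop:maximal-isolated} as an immediate consequence of Proposition \ref{isolated-fg}, with $S=R$, $\mathcal L=\insid(R)$, and $M$ a maximal element of $\mathcal L\setminus\{R\}$. The identifications you spell out (top element $R$, finitely generated elements of $\mathcal L$ being the finitely generated ideals) are precisely the bookkeeping the paper leaves implicit.
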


\subsection{Primary ideals}
We want to study the set of primary ideals of a ring, in particular in the Noetherian case. Given a ring $R$, we denote by $\primary_R$ the set of all primary ideals of $R$; if $P$ is a prime ideal of $R$, we denote by $\primary(P)=\primary_R(P)$ be the set of $P$-primary ideals of $R$.

If $f:R\longrightarrow R'$ is a ring homomorphism, we denote by $f^\sharp:\insid(R')\longrightarrow\insid(R)$ the map given by $f^\sharp(I):=f^{-1}(I)$. When the two spaces are endowed with the Zariski topology, $f^\sharp$ is a spectral map, and thus it is continuous and closed when $\insid(R)$ and $\insid(R')$ are endowed with the constructible topology.
\begin{Prop}\label{prop:primary-noeth}
Let $R$ be a Noetherian ring, and let $P\in\Spec(R)$. Consider $\mathcal P(P)$ as a subspace of $\mathcal I^\bullet(R)^{\rm cons}$. Then 
$$
\mathcal P(P)_\infty=\chius^{\rm cons}(\mathcal P(P))=f^\sharp(\mathcal I^\bullet (R_P)),
$$
 where $f:R\to R_P$ is the localization map. In particular, if $R$ is local with maximal ideal $\mathfrak m$, then $\primary(\mathfrak m)$ is dense in $\insid^\bullet(R)$, with respect to the constructible topology. 
\end{Prop}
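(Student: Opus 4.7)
The plan is to prove the chain of inclusions
\begin{equation*}
f^\sharp(\insid^\bullet(R_P)) \;\subseteq\; \primary(P)_\infty \;\subseteq\; \chius^\cons(\primary(P)) \;\subseteq\; f^\sharp(\insid^\bullet(R_P)).
\end{equation*}
Everything rests on the observation that the specialization order of the Zariski topology on $\insid(R)$ is reverse inclusion, so for any nonempty family $\{Q_\alpha\}$ of proper ideals the supremum exists in $\insid^\bullet(R)$ and equals $\bigcap_\alpha Q_\alpha$.

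Two of these inclusions are essentially formal. Since the intersection of two $P$-primary ideals is again $P$-primary, the set $\primary(P)$ is closed under finite suprema, i.e.\ $\primary(P)_f=\primary(P)$; Theorem~\ref{supremum} then yields $\primary(P)_\infty \subseteq \chius^\cons(\primary(P))$. For the rightmost inclusion, I would use that $f^\sharp$ is a spectral map, hence continuous and closed with respect to the constructible topologies, and that $\insid^\bullet(R_P)$ is proconstructible in $\insid(R_P)$ (being the complement of the open quasi-compact set $\B(1)=\{R_P\}$). Hence $f^\sharp(\insid^\bullet(R_P))$ is proconstructible in $\insid(R)$. A routine check using that primary ideals contract well shows $Q=f^{-1}(QR_P)$ with $QR_P$ proper for every $P$-primary ideal $Q$, so $\primary(P) \subseteq f^\sharp(\insid^\bullet(R_P))$, and taking constructible closures closes this inclusion.

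The hard part will be the leftmost inclusion $f^\sharp(\insid^\bullet(R_P)) \subseteq \primary(P)_\infty$. Given a proper ideal $I$ of $R_P$, I must exhibit $J:=f^{-1}(I)$ as an intersection of $P$-primary ideals of $R$. My strategy is to first decompose $I$ inside the Noetherian local ring $R_P$: applying Krull's intersection theorem to the finitely generated module $R_P/I$ gives
\begin{equation*}
I \;=\; \bigcap_{n\geq 1}\bigl(I+(PR_P)^n\bigr).
\end{equation*}
Each ideal $I+(PR_P)^n$ has radical $PR_P$ and is therefore $PR_P$-primary, since $PR_P$ is maximal in $R_P$. Contracting to $R$ preserves primariness, so
\begin{equation*}
J \;=\; \bigcap_{n\geq 1} f^{-1}\bigl(I+(PR_P)^n\bigr)
\end{equation*}
presents $J$ as the Zariski supremum of a family of $P$-primary ideals, yielding $J \in \primary(P)_\infty$.

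The final assertion is then immediate: when $R$ is local with maximal ideal $\mathfrak{m}$, taking $P=\mathfrak{m}$ makes $f$ the identity map, so $f^\sharp(\insid^\bullet(R_P)) = \insid^\bullet(R)$, and the equality just proved becomes the density of $\primary(\mathfrak{m})$ in $\insid^\bullet(R)^\cons$.
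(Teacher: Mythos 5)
Your proof is correct, and it uses the same essential ingredients as the paper: the Krull intersection theorem in the local ring $R_P$, the closure of $\primary(P)$ under finite intersection together with Theorem~\ref{supremum}, and the fact that $f^\sharp$ is a spectral (hence constructibly closed and continuous) map. The organization differs slightly. The paper first proves the local case in full (where $f=\id$, so all three sets are visibly $\insid^\bullet(R)$) and then transfers it to general $R$ by observing that for a constructibly continuous and closed map one has $f^\sharp(\chius^\cons(A))=\chius^\cons(f^\sharp(A))$, applied with $A=\primary(PR_P)$; to extract the equality with $\primary(P)_\infty$ in the general case, one then silently uses that $f^\sharp$ carries intersections to intersections, hence suprema to suprema. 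You instead prove the cycle of three inclusions directly, and in particular establish the inclusion $f^\sharp(\insid^\bullet(R_P))\subseteq\primary(P)_\infty$ uniformly for every $P$ by applying Krull in $R_P$ and contracting the resulting $PR_P$-primary chain $I+(PR_P)^n$ back to $R$. This makes the $\primary(P)_\infty$ equality explicit rather than implicit, at the cost of one extra (routine) verification that $\primary(P)\subseteq f^\sharp(\insid^\bullet(R_P))$. Both routes are valid and of about the same length; yours is a little more self-contained for the general case, while the paper's local-then-transfer argument makes the ``In particular'' statement about local rings literally the base case.
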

\begin{proof}
Suppose first that $(R,\mathfrak{m})$ is local and that $P=\mathfrak{m}$, and let $I$ be a proper ideal of $R$. Then, $R/I$ is local with maximal ideal $\mathfrak{m}/I$ and, by the Krull Intersection Theorem (see e.g. \cite[Theorem 10.17 and Corollary 10.19]{AM}), $\bigcap_{n\geq 1}(\mathfrak{m}/I)^n=(0)$; hence, $\bigcap_{n\geq 1}(\mathfrak{m}^n+I)=I$ and thus $I\in\primary(\mathfrak{m})_\infty$. The set $\primary(P)$ is closed by finite intersections; hence, $\primary(P)_\infty\subseteq\chius^\cons(\primary(\mathfrak{m}))$ by Theorem \ref{supremum}. Therefore, $\primary(\mathfrak{m})$ is dense $\insid^\bullet(R)$, i.e., $\chius^\cons(\primary(\mathfrak{m}))=\mathcal P(\mathfrak m)_\infty=\insid^\bullet(R)=f^\sharp(\insid^\bullet(R))$ (the latter equality coming from the fact that in this case $f$ is the identity).

Now let $R$ be any Noetherian ring, let $P$ be any prime ideal of $R$ and let $f:R\to R_P$ be the localization map. By the first part of the proof, $\chius^\cons(\primary(PR_P))=\insid^\bullet(R_P)$. Since $f^\sharp$ is continuous and closed, with respect to the constructible topology, we have
$$
f^\sharp(\insid^\bullet(R_P))=\chius^\cons(f^\sharp(\primary(PR_P)))=\chius^\cons(\primary(P)),
$$
as claimed.
\end{proof}

In the non-local case, we need something more than primary ideals. The following is an extension to Proposition \ref{prop:maximal-isolated} to ideals of dimension 0.
\begin{Prop}\label{prop:primary-isolated}
Let $R$ be a Noetherian ring, and let $I$ be an ideal such that $R/I$ is zero-dimensional. If the residue field of every maximal ideal containing $I$ is finite, then $I$ is isolated in $\insid(R)^\cons$.
\end{Prop}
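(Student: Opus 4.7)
The strategy is to exhibit an explicit clopen neighborhood of $I$ in $\insid(R)^\cons$ equal to $\{I\}$, which shows that $I$ is isolated. The main algebraic input is that under the given hypotheses $R/I$ turns out to be a \emph{finite} ring. Since $R$ is Noetherian and $R/I$ is zero-dimensional, $R/I$ is Artinian, and therefore decomposes as a finite product $R/I\cong\prod_{j=1}^{s}A_j$ of local Artinian rings, each of which corresponds to a maximal ideal $\mathfrak m_j$ of $R$ containing $I$ and has residue field $R/\mathfrak m_j$.

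Each $A_j$ admits a finite filtration by powers of its maximal ideal whose successive quotients are finitely generated vector spaces over $R/\mathfrak m_j$. The hypothesis that each $R/\mathfrak m_j$ is finite forces each successive quotient to be finite, and hence each $A_j$, and so $R/I$, to be finite. In particular there are only finitely many ideals of $R$ strictly containing $I$; call them $J_1,\dots,J_m$. For each $k$ I would select an element $y_k\in J_k\setminus I$.

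Since $R$ is Noetherian, $I$ is finitely generated, say $I=(a_1,\dots,a_n)$, and I would then consider
\[
U:=\bigcap_{i=1}^{n}\B(a_i)\;\cap\;\bigcap_{k=1}^{m}\bigl(\insid(R)\setminus\B(y_k)\bigr).
\]
Each set $\B(a_i)$ and $\B(y_k)$ is open and quasi-compact in $\insid(R)^\zariski$, hence clopen in $\insid(R)^\cons$, so $U$ is clopen. The verification that $U=\{I\}$ is then direct: clearly $I\in U$ (the $a_i$'s lie in $I$ while the $y_k$'s do not); conversely, any $J\in U$ must satisfy $I\subseteq J$ because $a_1,\dots,a_n\in J$, which forces $J\in\{I,J_1,\dots,J_m\}$, and the condition $y_k\notin J$ rules out each $J_k$. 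Hence $\{I\}$ is open in the constructible topology.

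I do not expect any serious obstacle. The only step requiring some care is proving the finiteness of $R/I$, which combines the Artinian structure theorem with the finiteness of the residue fields; once this is in place, the explicit construction of $U$ by finitely many basic clopen conditions is essentially forced by the fact that the specialization order on $\insid(R)$ is set inclusion.
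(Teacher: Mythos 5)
Your proof is correct, and the key observation is exactly the one the paper uses: combining the Artinian structure theorem with the finiteness of the residue fields to conclude that $R/I$ is a \emph{finite} ring, so there are only finitely many ideals containing $I$. The difference is only in the final bookkeeping. The paper identifies the clopen set $\B(a_1,\ldots,a_n)$ with $\insid(R/I)$ (the constructible topology of $\insid(R)$ restricting to the constructible topology of $\insid(R/I)$ since the set is proconstructible), and then observes that a finite Hausdorff space is discrete, so $(0)$ is isolated. You instead build the clopen neighborhood $U$ of $I$ explicitly inside $\insid(R)$, choosing one witness $y_k\in J_k\setminus I$ for each of the finitely many ideals $J_k$ strictly containing $I$ and intersecting the corresponding clopen conditions. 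Your verification that $U=\{I\}$ is sound (the conditions $a_i\in J$ force $I\subseteq J$, and the conditions $y_k\notin J$ rule out every $J_k$, including $J_k=R$). Both endgames are immediate once the finiteness of $R/I$ is established; yours is slightly more explicit and avoids invoking the Hausdorffness of the constructible topology, while the paper's is slightly slicker. Essentially the same proof.
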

\begin{proof}
Let $I:=\langle x_1,\ldots,x_n\rangle$. Then, $\mathcal B(x_1,\ldots,x_n)$ is clopen in the constructible topology, and thus it is enough to show that $I$ is isolated in $\mathcal B(x_1,\ldots,x_n)$, or equivalently that the zero ideal is isolated in $\insid(R/I)^\cons$.

Let $S:=R/I$. Then, $S$ is an Artin ring with all residue fields finite; hence, it is finite, and thus also $\insid(S)$ is finite. Since the constructible topology is Hausdorff, $\insid(S)^\cons$ is discrete, and in particular the zero ideal is isolated. The claim is proved.
\end{proof}

\begin{Prop}\label{prop:primary0}
Let $R$ be a Noetherian ring, and let $$\primary_0:=\{I\in\insid^\bullet(R)\mid \dim(R/I)=0\}.$$ Then the following properties hold.
\begin{enumerate}[\rm (1)]
\item\label{prop:primary0:dense} $\primary_0$ is dense in $\insid^\bullet(R)^\cons$.
\item\label{prop:primary0:mindense} If all residue fields of $R$ are finite, then $\primary_0$ is the smallest dense subset of $\insid^\bullet(R)^\cons$.
\end{enumerate}
\end{Prop}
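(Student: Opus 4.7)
The plan is to treat part (1) as the substantive work and derive part (2) as an immediate application of Proposition \ref{prop:primary-isolated}.

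For (1), I would first verify that $\primary_0$ is closed under finite intersections: if $J_1,J_2\in\primary_0$, then $V(J_1\cap J_2)=V(J_1)\cup V(J_2)$ remains a finite union of maximal ideals, so $\dim R/(J_1\cap J_2)=0$. Since the specialization order on $\insid^\bullet(R)^\zariski$ is reverse inclusion, intersection is the supremum; hence $(\primary_0)_f=\primary_0$. Theorem \ref{supremum}\ref{supremum:sup} then yields $(\primary_0)_\infty\subseteq\chius^\cons(\primary_0)$.

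The remaining step is to exhibit every proper ideal $I$ as an element of $(\primary_0)_\infty$. The natural candidates are the ideals $I+\mathfrak{m}^n$, for $\mathfrak{m}\in\Max(R)$ with $\mathfrak{m}\supseteq I$ and $n\geq 1$: when proper, each is $\mathfrak{m}$-primary (its radical is $\mathfrak{m}$, since every prime above it contains $\mathfrak{m}^n$), hence lies in $\primary_0$. I would then prove the identity
\begin{equation*}
I=\bigcap_{\substack{\mathfrak{m}\supseteq I,\ n\geq 1\\ I+\mathfrak{m}^n\neq R}}(I+\mathfrak{m}^n)
\end{equation*}
by passing to $A:=R/I$ and applying the Krull intersection theorem in each Noetherian local ring $A_{\bar{\mathfrak{m}}}$: if $\bar x\in A$ lies in $\bar{\mathfrak{m}}^n$ for every maximal $\bar{\mathfrak{m}}$ of $A$ and every $n$, then $\bar x/1=0$ in $A_{\bar{\mathfrak{m}}}$, so $\Ann_A(\bar x)$ escapes every maximal ideal of $A$ and must equal $A$, forcing $\bar x=0$. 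The index set is nonempty because $I$ is proper, so $I\in(\primary_0)_\infty\subseteq\chius^\cons(\primary_0)$.

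For (2), the finite-residue-field hypothesis ensures that every $I\in\primary_0$ satisfies the assumptions of Proposition \ref{prop:primary-isolated}, so $I$ is isolated in $\insid(R)^\cons$; since $\insid^\bullet(R)$ is the complement of the clopen singleton $\{R\}$ (the basic quasi-compact open set $\B(1)$) and is therefore proconstructible in $\insid(R)$, $I$ remains isolated in $\insid^\bullet(R)^\cons$. Because isolated points must belong to every dense subset, this together with (1) yields that $\primary_0$ is the smallest dense subset.

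The main obstacle I anticipate is recognizing that the global identity $I=\bigcap_{\mathfrak{m},n}(I+\mathfrak{m}^n)$ has to be checked after localization at each maximal ideal of $A=R/I$ individually, rather than in $A$ itself (where $\bigcap_n\bar{\mathfrak{m}}^n$ need not vanish); once this is carried out maximal-ideal by maximal-ideal, the remainder is routine bookkeeping with Theorem \ref{supremum} and Proposition \ref{prop:primary-isolated}.
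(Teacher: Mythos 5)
Your argument is correct, and the underlying ideas coincide with the paper's: closure of $\primary_0$ under finite intersections, Theorem \ref{supremum}, and the Krull intersection theorem applied after localizing at each maximal ideal. The only structural difference is presentational: the paper factors the argument through Proposition \ref{prop:primary-noeth}, writing every ideal $J$ as $\bigcap_{\mathfrak{m}}\lambda^\sharp_\mathfrak{m}(JR_\mathfrak{m})$ and invoking the already-established fact that each contraction lies in $\primary(\mathfrak{m})_\infty$, whereas you inline the Krull argument by directly exhibiting the family $\{I+\mathfrak{m}^n\}$ of $\mathfrak{m}$-primary ideals intersecting to $I$. Both amount to the same computation in $R/I$ localized at its maximal ideals; yours is slightly more self-contained, the paper's reuses a lemma it needed anyway. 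Your extra remark in part (2) — that an ideal isolated in $\insid(R)^\cons$ stays isolated in the clopen subspace $\insid^\bullet(R)^\cons$ by Proposition \ref{pro-retro-spec} — makes explicit a small step the paper leaves tacit, and is worth keeping.
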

Note that, if $(R,\mathfrak{m})$ is local, then $\primary_0=\primary(\mathfrak{m})$.
\begin{proof}
\ref{prop:primary0:dense} Clearly, $\primary_0$ is closed by finite intersections, and thus by Theorem \ref{supremum} it is enough to show that every ideal of $R$ is an intersection of elements of $\primary_0$. For every maximal ideal $\mathfrak{m}$, let  $\lambda_\mathfrak{m}:R\longrightarrow R_\mathfrak{m}$ be the canonical localization map and $\lambda^\sharp_\mathfrak{m}:\insid^\bullet(R_\mathfrak{m})\longrightarrow\insid^\bullet(R)$ be the corresponding map of ideal spaces. For every ideal $J$ of $R$, we have
\begin{equation*}
J=\bigcap\{\lambda^\sharp_\mathfrak{m}(JR_\mathfrak{m})\mid\mathfrak{m}\in\Max(R)\}
\end{equation*}
(where $JR_{\mathfrak m}$ denotes the ideal of $R_{\mathfrak m}$ generated by $\lambda_{\mathfrak m}(J)$). 
By Proposition \ref{prop:primary-noeth}, for every $\f m\in \Max(R)$ we have $\lambda_{\mathfrak m}^\sharp(JR_{\mathfrak m})\in \mathcal P(\f m)_\infty$. Since $\mathcal P(\f m)\subseteq \mathcal P_0$, we have $J\in(\primary_0)_\infty$. The first statement thus follows from Theorem \ref{supremum}.

\ref{prop:primary0:mindense} By Proposition \ref{prop:primary-isolated}, every element of $\primary_0$ is isolated in $\insid^\bullet(R)^\cons$. The claim follows.
\end{proof}

The non-Noetherian case is more complicated, since we don't have the Krull Intersection Theorem at our disposal. Indeed, even in the local case primary ideals need not to be dense. Recall that a prime ideal $\mathfrak p$ of a ring $R$ is \emph{branched} if there exists a $\mathfrak p$-primary ideal of $R$ distinct from $\mathfrak p$. A prime ideal that is not branched is called \emph{unbranched}.
\begin{Prop}\label{prop:primary-val}
Let $V$ be a valuation domain. Then, the following hold.
\begin{enumerate}[\rm (1)]
\item\label{prop:primary-val:P} If $P$ is a branched prime ideal of $V$, the closure of $\primary(P)$ in the constructible topology is equal to $\primary(P)$ plus the prime ideal directly below $P$.
\item\label{prop:primary-val:R} $\primary_V$ is a proconstructible subset of $\insid(V)$.
\end{enumerate}
\end{Prop}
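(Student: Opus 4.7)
Since $V$ is a valuation domain, its ideals are totally ordered by inclusion, and hence the specialization order on $\insid(V)$ (which is reverse inclusion) is total. The plan is to apply Proposition \ref{prop:chiuscons-linord} to both parts; under this order, suprema of chains of ideals are intersections, and infima are unions.

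For part \ref{prop:primary-val:P}, Proposition \ref{prop:chiuscons-linord}(1) reduces the claim to computing $\primary(P)_{\infty}$ and $\primary(P)_{(\infty)}$. The set $\primary(P)_{(\infty)}$ is easy: for a chain $\{I_\alpha\}\subseteq\primary(P)$, the union $\bigcup I_\alpha$ is contained in $P$, has radical $P$, and is $P$-saturated (if $s\notin P$ and $sy\in\bigcup I_\alpha$, then $sy\in I_\alpha$ for some $\alpha$, whence $y\in I_\alpha$), so $\primary(P)_{(\infty)}=\primary(P)$. For $\primary(P)_{\infty}$, I would pass to $V_P$ and then to the rank-one valuation domain $W:=V_P/QV_P$, where $Q$ is the prime directly below $P$ (which exists because $P$ is branched); the $P$-primary ideals of $V$ correspond bijectively to the nonzero proper ideals of $W$. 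In a rank-one valuation domain, any intersection of a chain of nonzero proper ideals is either still nonzero and proper or is the zero ideal, and lifting back yields either an element of $\primary(P)$ or the ideal $Q$. Since $\bigcap\primary(P)=Q$ (because in a nontrivial rank-one valuation domain the intersection of all nonzero proper ideals is zero), both cases occur, giving $\primary(P)_{\infty}=\primary(P)\cup\{Q\}$ and hence $\chius^{\cons}(\primary(P))=\primary(P)\cup\{Q\}$.

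For part \ref{prop:primary-val:R}, I would invoke Proposition \ref{prop:chiuscons-linord}(2): it suffices to show that the union and the intersection of any chain of primary ideals of $V$ are again primary. For unions: if $J=\bigcup I_\alpha$ with each $I_\alpha$ being $P_\alpha$-primary, then $J$ is proper (contained in the maximal ideal of $V$), $\sqrt J=\bigcup P_\alpha$ is a prime of $V$ (union of a chain of primes in a valuation domain), and the primary condition transfers verbatim from the one-radical case. The intersection case is the delicate one, since the $P_\alpha$ may vary. Let $J=\bigcap I_\alpha$, take $xy\in J$ with $y\notin\sqrt J$, and suppose toward contradiction that $x\notin I_{\alpha_0}$ for some $\alpha_0$. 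The $P_{\alpha_0}$-primary condition forces $y\in P_{\alpha_0}$, so $y^m\in I_{\alpha_0}$ for some $m\geq 1$. Since $y\notin\sqrt J$, we have $y^m\notin J$, so there is some $\beta$ with $y^m\notin I_\beta$, necessarily with $I_\beta\subsetneq I_{\alpha_0}$. The key valuation-theoretic observation is that $v(y^k)=kv(y)$ is weakly increasing in $k$ (because $v(y)\geq 0$) and $v(I_\beta)$ is upward closed in the value group, so $\{k\geq 1:y^k\in I_\beta\}$ is upward closed in $\mathbb{N}$; hence $y^m\notin I_\beta$ forces $y^k\notin I_\beta$ for every $k$, i.e., $y\notin P_\beta$. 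Then, since $xy\in J\subseteq I_\beta$, the $P_\beta$-primary condition on $I_\beta$ yields $x\in I_\beta\subsetneq I_{\alpha_0}$, contradicting $x\notin I_{\alpha_0}$. Hence $x\in I_\alpha$ for every $\alpha$, so $x\in J$, and $J$ is primary.

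The hard part is the intersection step in part \ref{prop:primary-val:R}: since the radicals $P_\alpha$ need not coincide, one cannot cleanly reduce to a single localization, and the argument must invoke the valuation-theoretic ``upward-closed powers'' observation directly. The same argument, specialized to the case where all $P_\alpha$ equal $P$, would also give an alternative approach to the intersection portion of part \ref{prop:primary-val:P}.
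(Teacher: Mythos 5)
Your part (1) is essentially correct and rests on the same reduction to the rank-one quotient $V_P/QV_P$ that the paper uses; the difference is that you obtain the constructible closure directly from Proposition \ref{prop:chiuscons-linord}, whereas the paper routes the proconstructibility of $\primary(P)\cup\{Q\}$ through the closedness of the spectral map $f^\sharp\colon\insid(V_P/QV_P)\to\insid(V)$. In part (2) the union case is also fine, and invoking Proposition \ref{prop:chiuscons-linord}(2) is the same starting point as the paper.

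The intersection step of part (2), however, has a genuine gap. From the fact that $\{k\geq 1: y^k\in I_\beta\}$ is upward closed in $\mathbb{N}$ and does not contain $m$, you can only conclude that it contains none of $1,\dots,m$; nothing rules out $y^{m'}\in I_\beta$ for some $m'>m$, i.e.\ $y\in P_\beta$, in which case the appeal to the $P_\beta$-primary condition no longer yields $x\in I_\beta$. So the claim ``$y^m\notin I_\beta$ forces $y^k\notin I_\beta$ for every $k$'' is false as written. The valuation-theoretic idea can be salvaged by letting $\beta$ vary with the exponent: for each $k\geq 1$ pick $\beta_k$ with $y^k\notin I_{\beta_k}$; since $xy\in J\subseteq I_{\beta_k}$ and the value set of $I_{\beta_k}$ is upward closed, one gets $kv(y)<v(x)+v(y)$, hence $(k-1)v(y)<v(x)$ for every $k$, while $y\in P_{\alpha_0}$ gives some $m$ with $y^m\in I_{\alpha_0}$ and thus $mv(y)>v(x)$ (as $x\notin I_{\alpha_0}$), a contradiction. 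The paper avoids this computation altogether: it does not prove directly that an intersection of primary ideals is primary, but splits on whether the set of radicals $\{\rad(Q_\alpha)\}$ has a least element, reduces the fixed-radical subcase to part (1), and disposes of the remaining subcase by observing that the intersection is then a prime and $\Spec(V)$ is proconstructible in $\insid(V)$.
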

\begin{proof}
\ref{prop:primary-val:P} By \cite[Theorem 17.3(3)]{gilmer}, there is a prime ideal $Q\subsetneq P$ of $V$ such that there are no prime ideals properly between $P$ and $Q$. Then, $Q$ is the intersection of all the $P$-primary ideals, and thus by Theorem \ref{supremum} is contained in the closure of $\primary(P)$ in the constructible topology. 

Let $f:V\longrightarrow V_P/QV_P$ be the natural map, and let $f^\sharp:\insid(V_P/QV_P)\longrightarrow\insid(V)$ be the induced map. Then, $f^\sharp(\insid^\bullet(V_P/QV_P))=\primary(P)\cup\{Q\}$; since $\insid^\bullet(R)$ is proconstructible in $\insid(R)$ for every ring $R$ and $f^\sharp$ is closed with respect to the constructible topology, it follows that $\primary(P)\cup\{Q\}$ is proconstructible. Considering the previous paragraph, $\primary(P)\cup\{Q\}$ must be the closure of $\primary(P)$, as claimed.

\ref{prop:primary-val:R} By Proposition \ref{prop:chiuscons-linord}\ref{prop:chiuscons-linord:cup}, we need to show that the infimum and the supremum of every nonempty subset $\Delta\subseteq\primary_V$ are in $\primary_V$.

Let thus $\Delta=\{Q_\alpha\}_{\alpha_\in A}$ and, for every $\alpha$, let $P_\alpha$ be the radical of $Q_\alpha$; let $\Delta':=\{P_\alpha\}_{\alpha\in A}$. If $\Delta'$ has a maximum, say $\overline{P}$, then the supremum of $\Delta$ is equal to the supremum of $\overline{\Delta}:=\{Q\in\Delta\mid \rad(Q)=\overline{P}\}\subseteq\primary(\overline{P})$. This set is proconstructible (if $\overline{P}$ is branched by the previous part of the proof, if $\overline{P}$ is not branched because in that case $\primary(\overline{P})=\{\overline{P}\}$), and thus it has a supremum in $\primary(\overline{P})\subseteq\primary_V$; hence, $\Delta$ has a supremum.

If $\Delta'$ has not a maximum, then for every $\alpha$ there is an $\alpha'$ such that $P_\alpha=\rad(Q_\alpha)\subsetneq\rad(Q_{\alpha'})=P_{\alpha'}$, and thus $P_\alpha\subseteq Q_{\alpha'}$; hence, the supremum $Q:=\bigcup_\alpha Q_\alpha$ of $\Delta$ is also equal to the supremum $\bigcup_\alpha P_\alpha$ of $\Delta'$. However, $\Delta'$ is contained in $\Spec(V)$, and the latter is closed in the constructible topology; hence, $Q\in\Spec(V)\subseteq\primary_V$. Therefore,  the supremum of $\Delta$ belongs to $\primary_V$.

The claim for the infimum follows similarly: if $\Delta'$ has a minimum $\overline{P}$, then $\overline{\Delta}$ has an infima which is either in $\primary(\overline{P})$ or is the prime ideal directly below $\overline{P}$, while otherwise the infimum of $\Delta'$ is the intersection of all the elements of $\Delta'$ (which is in $\Spec(V)$ and thus in $\primary_V$). In both cases, the infimum is in $\primary_V$. Therefore, $\primary_V$ is closed by suprema and infima and so it is proconstructible.
\end{proof}

Note that, if $\dim V>1$, then $\primary_V\neq\insid^\bullet(V)$: for example, if $\mathfrak{m}$ is the maximal ideal of $V$ and $x$ belongs to a prime ideal $P$ strictly contained in $\mathfrak{m}$, then $x\mathfrak{m}$ is not primary: indeed, if $y\in\mathfrak{m}\setminus P$, then $xy\in x\mathfrak{m}$, while $x\notin x\mathfrak{m}$ and $y^n\notin P$ and thus $y^n\notin x\mathfrak{m}$ for every $n\geq 1$.

\subsection{Principal ideals}
Proposition \ref{prop:propL}\ref{prop:propL:dense} shows that finitely generated substructures form a dense subset (with respect to the constructible topology) of the set of all substructures. In this section, we show that the same does not hold if we replace ``finitely generated'' with ``cyclic'' (or ``principal'').

Given a ring $R$, let $\insprinc(R)$ denote the set of principal ideals of $R$. We first study the Noetherian case.

%\begin{Prop}\label{prop:maximal-isolated}
%Let $M$ be a $R$-module, and let $N\in\inssubmod(M)$ be maximal among the proper submodules of $M$. Then, $N$ is an isolated point of $\inssubmod(M)^\cons$ if and only if it is finitely generated.
%\end{Prop}
%\begin{proof}
%Apply Proposition \ref{isolated-fg}.
%\end{proof}

\begin{Prop}\label{prop:princ-noeth-PIR}
Let $R$ be a Noetherian ring. Then, $\insprinc(R)$ is dense in $\insid(R)^\cons$ if and only if $R$ is a principal ideal ring.
\end{Prop}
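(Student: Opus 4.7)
The ``if'' implication is immediate: if $R$ is a PIR then $\insprinc(R)=\insid(R)$, and density holds trivially.

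For the converse, assume $\insprinc(R)$ is dense in $\insid(R)^\cons$. Let $\mathfrak m$ be a maximal ideal of $R$. Since $R$ is Noetherian, $\mathfrak m$ is finitely generated, so by Proposition \ref{prop:maximal-isolated} the singleton $\{\mathfrak m\}$ is open in $\insid(R)^\cons$, i.e., $\mathfrak m$ is isolated in the constructible topology. Because a dense subset must meet every nonempty constructible-open set, $\mathfrak m\in\insprinc(R)$, i.e., $\mathfrak m$ is principal. Thus \emph{every} maximal ideal of $R$ is principal.

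It remains to deduce that a Noetherian ring in which every maximal ideal is principal is itself a PIR. Krull's principal ideal theorem forces $\dim R\le 1$. If $\dim R=0$, then $R$ is Artinian and decomposes as a finite direct product of Artinian local rings, each of which has a principal maximal ideal and is therefore a special PIR, so $R$ is a PIR. The case $\dim R=1$ is a classical theorem on the structure of Noetherian PIRs (reducing via the nilradical to the reduced case, then using on each quotient $R/\mathfrak p$ by a minimal prime the fact that a one-dimensional Noetherian domain with all maximal ideals principal is a Dedekind domain with trivial class group, hence a PID; see, e.g., Kaplansky's \emph{Commutative Rings} or Zariski--Samuel, \emph{Commutative Algebra}, Vol.~I).

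The main obstacle is this last, purely ring-theoretic step: once Proposition \ref{prop:maximal-isolated} is available, the density hypothesis yields ``every maximal ideal is principal'' in essentially one line, but the passage from there to ``every ideal is principal'' lies outside the topological machinery developed in the present paper and has to be invoked from the classical literature.
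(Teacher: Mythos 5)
Your proof takes essentially the same approach as the paper: both use Proposition \ref{prop:maximal-isolated} to deduce that every maximal ideal is an isolated point of $\insid(R)^\cons$ and hence must lie in the dense set $\insprinc(R)$, and both then invoke the classical theorem of Kaplansky that a Noetherian ring in which every maximal ideal is principal is a principal ideal ring. The paper cites this last fact directly (Kaplansky, Theorem~12.3) without sketching its proof.
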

\begin{proof}
If $R$ is a principal ideal ring, then every ideal is principal and thus $\insprinc(R)$ is the whole $\insid(R)$.

Conversely, suppose that $\insprinc(R)$ is dense in $\insid(R)^\cons$. Since $R$ is Noetherian, by Proposition \ref{prop:maximal-isolated} every maximal ideal is an isolated point of $\insid(R)^\cons$; therefore, $\insprinc(R)$ must contain every maximal ideal. By \cite[Theorem 12.3]{kaplansky-edm}, $R$ is a principal ideal ring, as claimed.
\end{proof}

Proposition \ref{prop:princ-noeth-PIR} doesn't work in the non-Noetherian setting; for example, if $D$ is a B\'ezout domain then $\insprinc(D)=\insid_f(D)$ is dense in the constructible topology, but $D$ may not be Noetherian. However, we can say something; recall that the $v$-operation is the \emph{divisorial closure} $v:I\mapsto(D:_K(D:_KI))$ (where $K$ is the quotient field of $D$) while the $t$-operation is the finite-type closure associated to $v$, i.e., $t=v_f$. An ideal is \emph{divisorial} if it is $v$-closed.
\begin{Prop}\label{prop:dense-top}
Let $D$ be an integral domain. If $\insprinc(D)$ is dense in $\insid(D)^\cons$, then the $t$-operation of $D$ is equal to the identity.
\end{Prop}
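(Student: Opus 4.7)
The plan is to use Proposition \ref{prop:Fstar} applied to the $v$-operation on the algebraic lattice $\mathcal{L}=\insid(D)$. Recall that $v\colon I\mapsto (D:_K(D:_KI))$ is a closure operation on $\insid(D)$ (one checks that for $I\subseteq D$ nonzero, $I^v\subseteq D$, since $D\subseteq (D:_KI)$ forces $(D:_K(D:_KI))\subseteq D$), and by definition $t=v_f$ is its finite-type companion. The goal is to show that $I^t=I$ for every $I\in\insid(D)$.

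The key elementary observation is that every principal ideal is divisorial: for $a\in D\setminus\{0\}$, $(D:_K(a))=a^{-1}D$, and hence $(a)^v=(D:_Ka^{-1}D)=aD=(a)$; the zero ideal is also trivially $v$-closed. Therefore
\begin{equation*}
\insprinc(D)\subseteq \insid(D)^v.
\end{equation*}

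Now I would invoke Proposition \ref{prop:Fstar} with $\mathcal{L}=\insid(D)$ and $c=v$, which yields
\begin{equation*}
\chius^\cons(\insid(D)^v)=\insid(D)^{v_f}=\insid(D)^t.
\end{equation*}
Combining this with the density hypothesis $\chius^\cons(\insprinc(D))=\insid(D)$ and monotonicity of the constructible closure gives
\begin{equation*}
\insid(D)=\chius^\cons(\insprinc(D))\subseteq\chius^\cons(\insid(D)^v)=\insid(D)^t,
\end{equation*}
so every ideal of $D$ is $t$-closed, that is, the $t$-operation is the identity.

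There is no real obstacle; the only point requiring mild care is ensuring that $v$ restricts to a genuine closure operation on the algebraic lattice $\insid(D)$ (so that Proposition \ref{prop:Fstar} applies with this choice of $c$), and that principal ideals sit inside the $v$-closed ideals — both of which are standard facts about the divisorial closure.
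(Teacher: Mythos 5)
Your proof is correct and essentially the same as the paper's: both observe that principal ideals are divisorial, use Proposition \ref{prop:Fstar} to see that the $t$-closed ideals form a proconstructible subset, and then let the density hypothesis force every integral ideal to be $t$-closed. The only small variations are that you apply Proposition \ref{prop:Fstar} directly inside $\mathcal{L}=\insid(D)$ while the paper applies it to $\mathcal{F}(D)$ and then intersects with $\insid(D)$, and that the paper spells out the last step (that a finite-type semistar operation fixing all integral ideals is the identity on all of $\mathcal{F}(D)$), which your ``that is'' leaves implicit.
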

\begin{proof}
Since every principal ideal is divisorial, we have $\insprinc(R)\subseteq\mathcal{F}(D)^v\subseteq\mathcal{F}(D)^t$. By Proposition \ref{prop:Fstar} (or \cite[Proposition 2.9 and Corollary 2.10]{fontana-loper-nagata}), $\mathcal{F}(D)^t$ is proconstructible in $\mathcal F (D)$; it follows that $\mathcal{F}(D)^t\cap \mathcal I(D)$ is proconstructible in $\mathcal I(D)$. Hence, since $\insprinc(D)$ is dense in $\mathcal I(D)^{\rm cons}$, it must be $\mathcal I(D)\subseteq \mathcal{F}(D)^t$. It follows that the $t$-operation is equal to the identity (since two semistar operations of finite type are equal if and only if they agree on integral ideals).
\end{proof}

\begin{corollary}
Let $D$ be an integrally closed integral domain. If $\insprinc(D)$ is dense in $\insid(D)^\cons$, then $D$ is a Pr\"ufer domain.
\end{corollary}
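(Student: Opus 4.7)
The plan is to apply Proposition \ref{prop:dense-top} and then exploit the integrally closed hypothesis by comparing the $t$-operation with the $b$-operation (the integral closure of ideals). First, Proposition \ref{prop:dense-top} gives immediately that the $t$-operation of $D$ equals the identity; since $t$ coincides with the $v$-operation on finitely generated ideals, every nonzero finitely generated ideal $I$ of $D$ is divisorial, i.e., $I = I^v$.

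Next I would bring in the $b$-operation, defined by $I^b := \bigcap\{IV \mid V \text{ is a valuation overring of } D\}$. Using that $D$ is integrally closed (so $D = \bigcap V$ over valuation overrings), one verifies the standard inequality $I^b \subseteq I^v$ on finitely generated ideals by a short calculation: given $x \in I^b$ and $y \in (D:I)$, one has $xy \in yIV \subseteq V$ for every valuation overring $V$, hence $xy \in \bigcap V = D$ and therefore $x \in (D:(D:I)) = I^v$. Combined with the previous paragraph, this forces $I \subseteq I^b \subseteq I^v = I$ for every nonzero finitely generated ideal $I$; in other words, every such $I$ coincides with its integral closure as an ideal.

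To conclude, I would invoke the classical theorem of Krull characterizing Pr\"ufer domains as precisely the integral domains in which every finitely generated ideal is integrally closed (equivalently, $b = \mathrm{id}$). This gives at once that $D$ is Pr\"ufer. The only genuine content is the inequality $b \leq v$ on integrally closed domains, which is the easy computation above; everything else is a direct appeal to Proposition \ref{prop:dense-top} and a standard characterization, so I do not expect any serious obstacle.
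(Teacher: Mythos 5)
Your proposal is correct and reaches the same conclusion, but by a more self-contained route than the paper. Both proofs open identically by invoking Proposition \ref{prop:dense-top} to get that the $t$-operation is the identity (hence every finitely generated ideal is divisorial, since $t=v$ on finitely generated ideals). At that point the paper simply cites \cite[Proposition 34.12]{gilmer}, which is precisely the statement that an integrally closed domain in which every finitely generated ideal is divisorial is Pr\"ufer. You instead reprove that implication: you establish the inequality $b\leq v$ for an integrally closed domain via the short conductor computation (which is correct, and in fact does not even need $I$ finitely generated), deduce that every finitely generated ideal is integrally closed, and then appeal to Krull's characterization of Pr\"ufer domains as the domains on which the $b$-operation is the identity (this is the same \cite[Theorem 24.7]{gilmer} that the paper uses later in the proof of Proposition \ref{prop:controes-semistar}). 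So the only real difference is that where the paper cites one classical proposition, you supply its proof by passing through the $b$-operation. Your version is longer but makes the mechanism explicit; the paper's is shorter by delegating the nontrivial step to a reference. There is no gap.
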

\begin{proof}
By Proposition \ref{prop:dense-top}, if $\insprinc(D)$ is dense then the $t$-operation is the identity. Since $D$ is integrally closed, it follows that $D$ is a Pr\"ufer domain, in view of \cite[Proposition 34.12]{gilmer}.
\end{proof}

Note that this corollary cannot be inverted -- for example, a Dedekind domain that is not a PID gives an example of a Pr\"ufer domain whose principal ideals are not dense, by Proposition \ref{prop:princ-noeth-PIR}. We advance the following

\textbf{Conjecture}: if $D$ is integrally closed and $\insprinc(D)$ is dense in $\insid(D)^\cons$, then $D$ is a B\'ezout domain.

\medskip

We can also ask the opposite question: when is $\insprinc(D)$ proconstructible in $\insid(D)$?
\begin{Prop}\label{prop:accp}
Let $R$ be a ring. If $\insprinc(R)$ proconstructible in $\insid(R)$, then $R$ satisfies the ascending chain condition on principal ideals.
\end{Prop}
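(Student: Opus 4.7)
The plan is to take an arbitrary ascending chain of principal ideals $(a_1)\subseteq(a_2)\subseteq\cdots$ and show it must stabilize, by applying the infimum version of Theorem \ref{supremum} to this chain inside $\insid(R)^\zariski$. Recall that the specialization order on $\insid(R)^\zariski$ is the reverse of set inclusion; hence, reading the ascending chain in the Zariski specialization order, it becomes a descending chain, and its infimum in that order is the union $I:=\bigcup_{n\geq 1}(a_n)$.

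First I would set $Y:=\{(a_n)\mid n\geq 1\}\subseteq\insprinc(R)$ and observe that $Y_{(f)}$ exists and is contained in $\insprinc(R)$: indeed, the infimum (in the specialization order) of finitely many elements $(a_{n_1}),\ldots,(a_{n_k})$ of $Y$ is the largest among them (with respect to inclusion), which is again in $Y$ because $Y$ is totally ordered by inclusion. Applying Theorem \ref{supremum}\ref{supremum:inf} then yields that $Y_{(\infty)}$ exists and
\begin{equation*}
Y_{(\infty)}\subseteq\chius^\cons(Y_{(f)})\subseteq\chius^\cons(\insprinc(R)).
\end{equation*}

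The infimum of the whole set $Y$ in the specialization order is exactly the set-theoretic union $I=\bigcup_n(a_n)$, so $I\in Y_{(\infty)}$. By the hypothesis that $\insprinc(R)$ is proconstructible, $\chius^\cons(\insprinc(R))=\insprinc(R)$, and therefore $I=(b)$ for some $b\in R$. Since $b\in I$, there is some index $N$ with $b\in(a_N)$, which forces $(b)\subseteq(a_N)\subseteq(b)$, so $(a_n)=(a_N)$ for all $n\geq N$, proving ACCP.

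I do not anticipate a real obstacle here; the only thing to be careful about is keeping track of the orientation (the Zariski specialization order is \emph{reverse} inclusion, so a chain increasing in inclusion is a chain on which one should take the infimum rather than the supremum version of Theorem \ref{supremum}) and verifying that finite infima along this chain stay inside $\insprinc(R)$, which is immediate because the chain is totally ordered.
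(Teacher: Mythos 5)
Your proof is correct and follows essentially the same route as the paper's: both identify the union $\bigcup_n(a_n)$ of the chain as a limit that must lie in the proconstructible set $\insprinc(R)$, hence be principal, forcing stabilization. The only difference is one of bookkeeping: you explicitly work with the Zariski specialization order (reverse inclusion) and invoke the infimum case, Theorem \ref{supremum}\ref{supremum:inf}, whereas the paper's text phrases everything in terms of "finite suprema'' and $\Delta_\infty$, implicitly reading sup with respect to inclusion (equivalently, applying Theorem \ref{supremum}\ref{supremum:sup} to the coarse lower topology, whose constructible topology is the same). Your extra care with the orientation is a point in your favor, but the argument is the same.
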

\begin{proof}
Suppose that there is an ascending chain $\Delta:=\{(r_\alpha)\mid \alpha\in A\}$ of principal ideals that does not stabilize. As a set, $\Delta$ is closed by finite suprema (since it is a chain), and thus since $\insprinc(R)$ is closed in the constructible topology then $\Delta_\infty\subseteq\insprinc(R)$, by Theorem \ref{supremum}. In particular the union $\bigcup_\alpha(r_\alpha)$ must be a principal ideal, say generated by $r$. However, this implies $r\in(r_{\overline{\alpha}})$ for some $\overline{\alpha}\in A$, which in turn implies that $\Delta$ stabilizes at $(r_{\overline{\alpha}})$. This is a contradiction, and thus the conclusion follows.
\end{proof}

In particular, the previous proposition rules out every Pr\"ufer domain that is not Noetherian. On the positive side, by \cite[Theorem 7.9.5]{found-comm-ring-mod}, a domain $D$ is a UFD if and only if $\insprinc(D)=\mathcal{F}(D)^t\cap\insid(D)$, and thus if $D$ is a UFD then $\insprinc(D)$ is proconstructible. We give two extensions of this result; the first one uses a method similar to the proof of Proposition \ref{prop:primary-isolated}.
\begin{theorem}\label{thm:Krull}
Let $D$ be a Krull domain. Then, $\insprinc(D)$ is proconstructible in $\insid(D)$.
\end{theorem}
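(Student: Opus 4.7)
The plan is to prove that $\insprinc(D)$ is closed in the constructible topology of $\insid(D)$. Since $\insid(D)^\cons$ is quasi-compact and Hausdorff and the sets $\B(x)$ (for $x\in D$) are open and quasi-compact in $\insid(D)$, the family of the $\B(x)$ together with their complements is a subbasis of clopen sets of $\insid(D)^\cons$. It therefore suffices to verify that, for every family $\{(a_\alpha)\}_{\alpha\in A}$ of principal ideals and every ultrafilter $\mathscr{U}$ on $A$, the (unique) ultrafilter limit
\begin{equation*}
J=\{x\in D\mid \{\alpha\in A\mid a_\alpha\mid x\}\in\mathscr{U}\}
\end{equation*}
is a principal ideal of $D$.

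If $J=(0)$ there is nothing to prove, so fix a nonzero $y\in J$ and set $A_y:=\{\alpha\in A\mid a_\alpha\mid y\}\in\mathscr{U}$; necessarily $a_\alpha\neq 0$ for every $\alpha\in A_y$. Because $D$ is Krull, $\mathrm{div}(y)$ is supported on a finite set $\{P_1,\ldots,P_k\}$ of height-one primes of $D$. For $\alpha\in A_y$ the inequalities $v_P(a_\alpha)\leq v_P(y)$ force $v_P(a_\alpha)=0$ for every height-one $P\notin\{P_1,\ldots,P_k\}$, and each $v_{P_i}(a_\alpha)$ lies in the finite set $\{0,1,\ldots,v_{P_i}(y)\}$. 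Pushing $\mathscr{U}|_{A_y}$ forward along $\alpha\mapsto(v_{P_i}(a_\alpha))_{i=1}^k$ yields a principal ultrafilter on a finite set, concentrated at some tuple $(n_1,\ldots,n_k)$, so
\begin{equation*}
B:=\{\alpha\in A_y\mid v_{P_i}(a_\alpha)=n_i\text{ for all }i\}\in\mathscr{U}.
\end{equation*}
Since $D=\bigcap_P D_P$ over its height-one primes, two nonzero elements of $D$ with the same divisor differ by a unit; hence $(a_\alpha)=(a)$ for every $\alpha\in B$, where $a:=a_{\alpha_0}$ for a fixed $\alpha_0\in B$.

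It remains to check that $J=(a)$: if $a\mid x$ then $\{\alpha\mid a_\alpha\mid x\}\supseteq B\in\mathscr{U}$, whence $x\in J$; if $a\nmid x$ then $\{\alpha\mid a_\alpha\mid x\}\cap B=\emptyset$, so that set is not in $\mathscr{U}$ and $x\notin J$. Thus $J=(a)$ is principal. The main obstacle is the collapse step, where the family $\{(a_\alpha)\}$ is reduced, modulo $\mathscr{U}$, to a single principal ideal; this makes essential use of the two Krull-theoretic facts that $\mathrm{div}(y)$ has finite support and that the divisor determines a nonzero element of $D$ up to a unit.
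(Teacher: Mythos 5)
Your proof is correct, and it takes a genuinely different route from the paper's. The paper works inside $X:=\mathcal{F}(D)^t\cap\insid(D)$ (using that $\mathcal{F}(D)^t$ and $\insid(D)$ are both proconstructible in $\mathcal{F}(D)$), invokes the semigroup isomorphism between $(X,\times_t)$ and the free commutative monoid on the height-one primes, and deduces the much stronger structural fact that \emph{every} nonzero $t$-closed integral ideal is an isolated point of $X^\cons$; the theorem then drops out because $X\setminus\insprinc(D)$ is a union of isolated points, hence open. You instead argue directly with the ultrafilter description of the patch closure: you take an arbitrary ultrafilter limit of principal ideals, anchor at a nonzero element $y$ of the limit to force the relevant valuations into a finite box $\prod_i\{0,\ldots,v_{P_i}(y)\}$, collapse the pushed-forward ultrafilter on that finite set to a single tuple of valuations, and then use the Krull facts that $\mathrm{div}(y)$ has finite support and that a divisor determines an element up to units. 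Both arguments pivot on the same two pieces of Krull theory, but the paper's route yields extra information (isolatedness of all nonzero $t$-ideals, hence near-discreteness of $X^\cons$), while yours is more elementary and self-contained, avoiding the detour through $\mathcal{F}(D)^t$ and the monoid structure altogether. One small point worth flagging in your write-up: it is worth remarking (or citing the Finocchiaro or Fontana--Loper characterization referenced in the preliminaries) that the set $J$ defined by your formula is indeed the patch-limit of the family along $\mathscr{U}$, and that closedness in $\insid(D)^\cons$ is equivalent to stability under such limits; these are standard but not tautological.
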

\begin{proof}
Both $\mathcal{F}(D)^t$ and $\insid(D)$ are proconstructible in $\mathcal{F}(D)$; hence, the constructible topology of $X:=\mathcal{F}(D)^t\cap\insid(D)$ is the restriction of the constructible topology of $\mathcal{F}(D)$ (or of $\mathcal I(D)$). Since $\insprinc(D)\subseteq X$, it is enough to show that $\insprinc(D)$ is proconstructible in the spectral space $X$; to this end, we prove the following claim: \emph{each nonzero element of $X$ is isolated in $X^\cons$}.

Indeed, let $N\in X$ be nonzero. Then, there is a finite subset $F\subset N$ such that $N=(F)^t$ \cite[Corollary 44.3]{gilmer}; therefore, $\{N\}^\uparrow=\B(F)\cap X$ is a clopen set of the constructible topology, and thus it is enough to show that $N$ is isolated in ${\{N\}^\uparrow}$, with respect to the constructible topology. Let $Y$ be the set of height-one prime ideals of $D$.

If $N=D$ then $\{N\}^\uparrow=\{N\}$ and we are done. Otherwise, by \cite[Corollaries 43.9 and 44.8]{gilmer}, $N$ is contained in finitely many height-one primes, say $P_1,\ldots,P_k$. The set $X$ is a semigroup under the \emph{$t$-product} $I\times_tJ:=(IJ)^t$ (see \cite{Griffin-can-j-math}); by \cite[Chapter VII, \textsection 1, Theorem 2]{bourbaki}, $X$ is isomorphic to the free semigroup generated by $Y$, and $I$ divides $J$ in $X$ if and only if $J\subseteq I$. Therefore, $\{N\}^\uparrow\setminus\{N\}$ has only finitely many minimal elements, namely $(N(D:P_1))^t,\ldots,(N(D:P_k))^t$. Like for the case of $N$, for each $i$ there is a finite set $G_i\subseteq D$ such that $(G_i)^t=(N(D:P_i))^t$; thus, each set $\{(N(D:P_i))^t\}^\uparrow$ ($1\leq i\leq k$) is clopen in $X^\cons$. Thus, their finite union $\{N\}^\uparrow\setminus\{N\}$ is also clopen; it follows that $N$ is isolated in ${\{N\}^\uparrow}$, with respect to the constructible topology. The claim  is proved. 

In particular, each nonprincipal ideal of $X$ is isolated in $X$, and thus $X\setminus\insprinc(D)$ is open in $X^\cons$; it follows that $\insprinc(D)$ is proconstructible in $X$. The conclusion is now clear.
\end{proof}

\begin{Prop}\label{prop:gcd}
Let $D$ be a GCD domain, and let $\mathcal{F}_p(D)$ be the set of cyclic submodules of the quotient field of $D$. Then:
\begin{enumerate}[\rm (1)]
\item\label{prop:gcd:Fp} $\chius^\cons(\mathcal{F}_p(D))=\mathcal{F}(D)^t$;
\item\label{prop:gcd:princ} $\chius^\cons(\insprinc(D))=\mathcal{F}(D)^t\cap\insid(D)$;
\item\label{prop:gcd:chiuso} The following conditions are equivalent.
\begin{enumerate}[\rm (i)]
    \item\label{prop:gcd:chiuso:proc} $\insprinc(D)$ is proconstructible in $\mathcal I(D)$.
    \item\label{prop:gcd:chiuso:UFD} $D$ is a unique factorization domain. 
\end{enumerate}
\end{enumerate}
\end{Prop}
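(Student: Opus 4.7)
My plan is to derive all three parts from Proposition \ref{prop:Fstar}, applied twice, together with the key GCD fact: if $F=(a_1,\dots,a_n)$ is a finitely generated submodule of $K$ (WLOG with $a_i\in D$ after clearing denominators), then $F^v=(\gcd(a_i))$ in a GCD domain. A short computation gives this: writing $x=p/q$ in reduced form, $x a_i\in D$ for all $i$ forces $q\mid a_i$ for all $i$, hence $q\mid\gcd(a_i)=d$, so $(D:F)=d^{-1}D$ and therefore $F^v=(D:(D:F))=dD$.

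For \ref{prop:gcd:Fp}, I apply Proposition \ref{prop:Fstar} with $\mathcal{L}=\mathcal{F}(D)$ and $c=v$. The finitely generated elements of $\mathcal{L}$ are the finitely generated $D$-submodules of $K$, and by the GCD fact their $v$-closures are exactly the cyclic submodules; conversely every cyclic submodule $xD=\{x\}^v$ is itself the $v$-closure of a finitely generated submodule. Hence $\{F^v\mid F\in\mathcal{L}_{\mathrm{fin}}\}=\mathcal{F}_p(D)$, and Proposition \ref{prop:Fstar} gives $\chius^\cons(\mathcal{F}_p(D))=\mathcal{F}(D)^{v_f}=\mathcal{F}(D)^t$.

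For \ref{prop:gcd:princ}, I apply Proposition \ref{prop:Fstar} to $\mathcal{L}=\insid(D)$ with $c=v|_{\insid(D)}$ (noting $v$ does restrict to $\insid(D)$ since $I\subseteq D$ implies $(D:I)\supseteq D$ and hence $I^v\subseteq D$). The finitely generated elements of $\insid(D)$ are the finitely generated ideals, and the same GCD fact identifies their $v$-closures with the principal ideals; so $\{F^v\mid F\in\insid(D)_{\mathrm{fin}}\}=\insprinc(D)$. It remains to check that the finite-type closure $(v|_{\insid(D)})_f$ on $\insid(D)$ coincides with the restriction of $t$ to $\insid(D)$: for $I\in\insid(D)$, any finite $F\subseteq I$ generates a finitely generated ideal $J\subseteq I$ with $F^v=J^v$, so both closures of $I$ are $\bigcup\{J^v\mid J\subseteq I\text{ finitely generated ideal}\}$. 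Proposition \ref{prop:Fstar} then yields $\chius^\cons(\insprinc(D))=\mathcal{F}(D)^t\cap\insid(D)$.

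For \ref{prop:gcd:chiuso}, both directions reduce via \ref{prop:gcd:princ} to the assertion: in a GCD domain, every integral $t$-ideal is principal if and only if $D$ is a UFD. If $D$ is a UFD, the gcd of any subset exists and the classical fact that every nonzero $t$-ideal is principal applies. Conversely, assuming every integral $t$-ideal is principal, I take an ascending chain $(a_1)\subseteq(a_2)\subseteq\cdots$ of principal ideals; its union $I$ is a $t$-ideal (each finite $F\subseteq I$ lies in some $(a_n)$, so $F^v\subseteq(a_n)\subseteq I$), hence principal, forcing the chain to stabilize. Thus $D$ satisfies ACCP, and GCD plus ACCP is UFD. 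The main obstacle is purely technical: justifying that $v$-closures of finitely generated ideals are principal in a GCD domain and that the characterization ``GCD + every $t$-ideal principal $=$ UFD'' goes through cleanly; everything else is a direct invocation of Proposition \ref{prop:Fstar}.
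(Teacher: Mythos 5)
Your proof is correct and rests on the same underlying machinery as the paper's (Proposition \ref{prop:Fstar}, the GCD structure of $v$-closures, and ACCP), but the assembly is a little different and, for parts \ref{prop:gcd:Fp} and \ref{prop:gcd:princ}, arguably tidier. The paper invokes Theorem \ref{supremum} to get $\mathcal{F}_p(D)_\infty\subseteq\chius^\cons(\mathcal{F}_p(D))$, cites an external reference for the identification $\mathcal{F}_p(D)_\infty=\mathcal{F}(D)^v\setminus\{K\}$, and then applies the second equality of Proposition \ref{prop:Fstar} ($\chius^\cons(\mathcal{L}^c)=\mathcal{L}^{c_f}$), which leaves a small residual point about $K$; you instead compute $\{F^v\mid F\in\mathcal{F}(D)_{\mathrm{fin}}\}=\mathcal{F}_p(D)$ directly from the $\gcd$ and apply the first equality of Proposition \ref{prop:Fstar} in one step, which sidesteps that subtlety and avoids the reference. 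Part \ref{prop:gcd:princ} is handled analogously in both proofs, and the identification $(v|_{\insid(D)})_f=t|_{\insid(D)}$ you spell out is exactly what makes the reduction legitimate. For part \ref{prop:gcd:chiuso}, the paper goes via Proposition \ref{prop:accp} and Theorem \ref{thm:Krull} (UFD $\Rightarrow$ Krull), while you route both directions through part \ref{prop:gcd:princ} to the equivalence ``every integral $t$-ideal is principal''; the substance (ACCP plus GCD $+$ ACCP $\Rightarrow$ UFD in one direction, $\insprinc(D)=\mathcal{F}(D)^t\cap\insid(D)$ for UFDs in the other) is the same. In short, your version is a correct and slightly more self-contained rearrangement of the paper's argument rather than a genuinely different one.
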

\begin{proof}
Since $D$ is a GCD domain, the intersection of two principal ideals (and thus of two principal fractional ideals) is again principal \cite[Theorem 16.2]{gilmer}; by Theorem \ref{supremum},  $\mathcal F_p(D)_\infty\subseteq\chius^{\rm cons}(\mathcal{F}_p(D))$. By \cite[Proposition 5.2.2(10)]{found-comm-ring-mod}, $\mathcal F_p(D)_\infty$ is the set $\mathcal F (D)^v\setminus\{K\}$ of proper $D$-submodules of $K$ that are $v$-closed (where $v$ is the divisorial closure). 
    
By Proposition \ref{prop:Fstar}, $\chius^\cons(\mathcal{F}(D)^v)=\mathcal{F}(D)^{v_f}=:\mathcal{F}(D)^t$, and thus it easily follows that $\chius^\cons(\mathcal{F}_p(D))=\mathcal{F}(D)^t$. Thus \ref{prop:gcd:Fp} is proved.

The equality \ref{prop:gcd:princ} follows in exactly the same way, because a divisorial integral ideal is also the intersection of principal integral ideals of $D$ (since $\alpha D\cap D$ is a principal integral ideal for every $\alpha\in K$).

\ref{prop:gcd:chiuso}. By Theorem \ref{thm:Krull} (or the discussion before it), it suffices to show \ref{prop:gcd:chiuso:proc}$\Longrightarrow$\ref{prop:gcd:chiuso:UFD}. This follows from Proposition \ref{prop:accp} and \cite[Theorem 5.1.20]{found-comm-ring-mod}.
%The last point now follows from the fact that a GCD domain is a UFD if and only if all its integral (or fractional) $t$-ideals are principal.
\end{proof}

\section{Overrings of an integral domain}\label{sect:overrings}
Let $A\subseteq B$ be a ring extension and let $R(B|A)$ denote the set of all subrings of $B$ containing $A$. Then, $R(B|A)$ is another example of algebraic lattice of sets: the associated closure operation sends $F\subseteq B$ to the ring $A[F]$ generated by $A$ and $F$. In particular, $R(B|A)$ is a spectral space, it is locally with maximum, and the set of finitely generated elements of $R(B|A)$ is the set of finitely generated $A$-algebras contained in $B$. In particular, the set of finitely generated $A$-algebras is dense in $R(B|A)^\cons$ (this had been observed, in the case of domain, in \cite[Proposition 7.6]{ZarNoeth}).

Let now $D$ be an integral domain and $K$ the quotient field of $D$. We denote by:
\begin{itemize}
\item $\Over(D):=R(K|D)$ the set of all \emph{overrings} of $D$;
\item $\Zar(D)$ the set of all valuation overrings of $D$;
\item $\overic(D)$ the set of integrally closed overrings of $D$;
\item $\pruf(D)$ the set of Pr\"ufer overrings of $D$;
\item $\prufsloc(D)$ the set of semilocal Pr\"ufer overrings of $D$.
\end{itemize}
By \cite[Example 2.1(3), Propositions 2.12, 3.2 and 3.6]{Fi}, we see that that $\Zar(D)$ and $\overic(D)$ are proconstructible in $\Over(D)$; note that the latter is again an algebraic lattice of sets, while the former is not (since the intersection of two noncomparable valuation domains is not a valuation domain). Our next results show that Pr\"ufer overrings usually do not give rise to spectral spaces.

\begin{comment}
Let us endow $R(B|A)$ with the Zariski topology, i.e., the topology whose basic open sets are those of the form
$$
\mathcal B(F):=\{R\in R(B|A):F\subseteq R \}
$$
where $F$ runs among finite subsets of $B$. By \cite[Proposition 3.5]{Fi}, $R(B|A)$ is a spectral space. 
\begin{remark}
Let $A\subseteq B$ be a ring extension. 
\begin{enumerate}
	\item Clearly, for every $R,R'\in  R(B|A)$, we have $R\in \chius(\{R'\})$ if and only if $R\subseteq R'$. Thus the order induced by the Zariski topology on $R(B|A)$ is the reverse inclusion. 
	\item $R(B|A)$ is locally with maximum. Indeed, every basic open set $\mathcal B(F)$ (where $F$ is a finite subset of $B$) has the minimum under inclusion, namely $A[F]$, and thus, by part (1), it is the maximum with respect to the order induced by the Zariski topology. 
\end{enumerate}
\end{remark}
If $D$ is an integral domain and $K$ is  the quotient field of $D$, we set $\Over(D):=R(K|D)$, i.e., $\Over(D)$ is the space of all overrings of $D$. Furthermore, we denote by $\Zar(D)$ the subspace of $\Over(D)$ consisting of all valuation overrings of $D$. By keeping in mind \cite[Example 2.1(3), Propositions 2.12 and 3.2]{Fi} it is immediately seen that $\Zar(D)$ is proconstructible in $\Over(D)$. 
\end{comment}
\begin{Prop}\label{semilocal-prufer-dense}
Let $D$ be an integral domain. Then, the closure of $\prufsloc(D)$ in the constructible topology of $\Over(D)$ is $\overic(D)$.
\end{Prop}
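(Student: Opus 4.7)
The plan is to prove both inclusions of $\chius^\cons(\prufsloc(D))=\overic(D)$ separately. The easy inclusion $\chius^\cons(\prufsloc(D)) \subseteq \overic(D)$ is essentially free: every Pr\"ufer domain is integrally closed, so $\prufsloc(D)\subseteq\overic(D)$, and since $\overic(D)$ is proconstructible in $\Over(D)$ (as recalled just before the statement), passing to constructible closures preserves the containment.

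For the reverse inclusion, I plan to apply Theorem \ref{supremum}(1) to $Y:=\Zar(D)$. The order induced by the Zariski topology on $\Over(D)$ is reverse inclusion, so the supremum of a nonempty family of overrings is simply its intersection; in particular $Y_f$ exists and consists of all finite intersections of valuation overrings of $D$. Two classical facts now drive the argument: first, a finite intersection $V_1\cap\cdots\cap V_n$ of valuation overrings of $D$ is a Pr\"ufer overring with at most $n$ maximal ideals, and therefore lies in $\prufsloc(D)$; second, by a theorem of Krull every integrally closed overring $T\neq K$ is the intersection of the valuation overrings containing it, so $T\in Y_\infty$. The case $T=K$ is handled separately, since $K$ is a field and so trivially a semilocal Pr\"ufer overring (a single maximal ideal $(0)$, every finitely generated nonzero ideal is $K$ itself).

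Putting these together, $Y_f\subseteq\prufsloc(D)$ and $\overic(D)\setminus\{K\}\subseteq Y_\infty$. Theorem \ref{supremum}(1) then yields
\[
\overic(D)\setminus\{K\}\subseteq Y_\infty\subseteq\chius^\cons(Y_f)\subseteq\chius^\cons(\prufsloc(D)),
\]
and adjoining the point $K\in\prufsloc(D)$ gives $\overic(D)\subseteq\chius^\cons(\prufsloc(D))$, completing the proof.

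The only nontrivial external ingredient is the classical statement that a finite intersection of valuation overrings with common quotient field is a semilocal Pr\"ufer domain; once this and Krull's representation theorem are invoked, the result becomes an essentially automatic application of the general Theorem \ref{supremum} to the algebraic-lattice formalism developed in the paper, so I do not anticipate any serious obstacle beyond citing these two facts correctly.
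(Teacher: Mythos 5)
Your proof is correct, but it takes a somewhat lighter route than the paper's. Both arguments hinge on taking $Y:=\Zar(D)$ and identifying finite suprema (intersections) with semilocal Pr\"ufer overrings and arbitrary suprema with integrally closed overrings. The paper, however, invokes Proposition~\ref{density-locally-with-maximum}(1), which uses that $\Over(D)$ is locally with maximum and that $\Zar(D)$ is proconstructible to obtain the \emph{equality} $\chius^\cons(Y_f)=Y_\infty$ in one step. You instead use only the one-sided containment $Y_\infty\subseteq\chius^\cons(Y_f)$ from the more elementary Theorem~\ref{supremum}(1), and then obtain the reverse inclusion $\chius^\cons(\prufsloc(D))\subseteq\overic(D)$ from the previously recalled fact that $\overic(D)$ is proconstructible. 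Your route thus avoids the Vietoris-hyperspace machinery underlying Proposition~\ref{density-locally-with-maximum}, at the cost of feeding in the proconstructibility of $\overic(D)$ as a separate input (whereas the paper feeds in the proconstructibility of $\Zar(D)$); since both facts are cited in the same breath just before the statement, this is a reasonable trade. Two small remarks: you only need the inclusion $Y_f\subseteq\prufsloc(D)$ rather than the full equality from Gilmer's Theorem 22.8, which you correctly observe; and the special treatment of $K$ is actually unnecessary, since $K$ is itself a valuation overring of $D$, hence $K\in\Zar(D)\subseteq Y_\infty$, so Krull's representation already covers it. Neither point affects correctness.
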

\begin{proof}
Consider the proconstructible subset $Y:=\Zar(D)$ of the spectral space $X:=\Over(D)$. By \cite[Theorem 22.8]{gilmer}, $Y_f={\prufsloc}(D)$; on the other hand, by \cite[Corollary 5.22]{AM}, $Y_\infty=\overic(D)$. Since $\Over(D)$ is locally with maximum, the conclusion follows from Proposition \ref{density-locally-with-maximum}.
\end{proof}

\begin{corollary}\label{cor:pruf-comp}
For an integral domain $D$, the following conditions are equivalent. 
\begin{enumerate}[\rm (i)]
	\item\label{cor:pruf-comp:pruf} The integral closure $\overline{D}$ of $D$ is a Pr\"ufer domain. 
	\item\label{cor:pruf-comp:qc} $\pruf(D)$ is quasi-compact as a subspace of $\Over(D)$. 
\end{enumerate}
\end{corollary}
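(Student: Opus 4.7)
My plan is to derive the equivalence by combining Proposition \ref{semilocal-prufer-dense} with the standard fact, recalled in Section 2.2, that if $Y$ is a quasi-compact subset of a spectral space then $Y^\downarrow$ is proconstructible. The key bridge is that in the Zariski topology on $\Over(D)$ the specialization order is reverse inclusion, so ``closure under passage to larger overrings'' becomes ``closure under generization''.

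For \ref{cor:pruf-comp:pruf}$\Longrightarrow$\ref{cor:pruf-comp:qc}, I would first observe that when $\overline D$ is Pr\"ufer any Pr\"ufer overring of $D$ must contain $\overline D$ (Pr\"ufer domains being integrally closed), while conversely any overring of $\overline D$ is Pr\"ufer; hence $\pruf(D) = \Over(\overline D)$ as sets. Then I would note that $\Over(\overline D)$ is itself an algebraic lattice of sets inside $K$, hence a spectral space under its intrinsic Zariski topology, and that this topology coincides with the one induced from $\Over(D)$ since the basic open sets $\mathcal B(F) = \{R : F \subseteq R\}$ match. Spectral spaces are quasi-compact, so \ref{cor:pruf-comp:qc} follows.

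For \ref{cor:pruf-comp:qc}$\Longrightarrow$\ref{cor:pruf-comp:pruf}, the argument runs in three steps. First, since the Zariski specialization order on $\Over(D)$ satisfies $T \leq R$ iff $R \subseteq T$, and $\pruf(D)$ is stable under enlargement of the overring, one has $\pruf(D) = \pruf(D)^\downarrow$. Second, by the cited fact from Section 2.2, the quasi-compactness of $\pruf(D)$ then forces $\pruf(D) = \pruf(D)^\downarrow$ to be proconstructible in $\Over(D)$, hence closed in the constructible topology. Third, since $\prufsloc(D) \subseteq \pruf(D) \subseteq \overic(D)$, Proposition \ref{semilocal-prufer-dense} yields
\[
\overic(D) = \chius^\cons(\prufsloc(D)) \subseteq \chius^\cons(\pruf(D)) = \pruf(D) \subseteq \overic(D),
\]
so $\pruf(D) = \overic(D)$. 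In particular $\overline D \in \pruf(D)$, i.e.\ $\overline D$ is Pr\"ufer.

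I do not expect any serious obstacle: the content is really just the combination of Proposition \ref{semilocal-prufer-dense} with the proconstructibility of $Y^\downarrow$, once one aligns the two sides of the order convention. The only point demanding a moment of care is the identification $\pruf(D) = \pruf(D)^\downarrow$, which requires tracking that ``larger ring'' and ``smaller in the specialization order'' are interchanged in this Zariski setting.
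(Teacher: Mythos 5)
Your proof is correct and the nontrivial direction \ref{cor:pruf-comp:qc}$\Longrightarrow$\ref{cor:pruf-comp:pruf} is essentially identical to the paper's: both note that $\pruf(D)$ is closed under generizations (because an overring of a Pr\"ufer domain is Pr\"ufer), combine quasi-compactness with the fact that $Y^\downarrow$ is proconstructible for quasi-compact $Y$, and then sandwich $\pruf(D)$ between $\prufsloc(D)$ and $\overic(D)$ using Proposition~\ref{semilocal-prufer-dense}. In the easy direction \ref{cor:pruf-comp:pruf}$\Longrightarrow$\ref{cor:pruf-comp:qc} you argue via $\pruf(D)=\Over(\overline D)$, which is an algebraic lattice of sets and hence spectral (so quasi-compact) in a topology that matches the subspace topology, whereas the paper instead invokes $\pruf(D)=\overic(D)$ and uses that $\overic(D)$ is already known to be proconstructible; both are valid and of comparable length, so this is only a cosmetic variation. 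One tiny point of hygiene: when identifying the subspace topology on $\Over(\overline D)$ with its intrinsic Zariski topology it is cleanest to note simply that $\Over(\overline D)=\{\overline D\}^\downarrow=\chius^\inverse(\{\overline D\})$ is closed in the inverse topology of $\Over(D)$, hence proconstructible and thus quasi-compact, which sidesteps the topology-matching check entirely.
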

\begin{proof}
\ref{cor:pruf-comp:pruf}$\Longrightarrow$\ref{cor:pruf-comp:qc} is trivial since, if $\overline{D}$ is a Pr\"ufer domain, then $\pruf(D)=\overic(D)$ by \cite[Corollary 4.5]{Bu-Va}. 

\ref{cor:pruf-comp:qc}$\Longrightarrow$\ref{cor:pruf-comp:pruf}. Since the order of the Zariski topology on $\Over(D)$ is the reverse inclusion, $\pruf(D)$ is closed under generizations, again by \cite[Corollary 4.5]{Bu-Va}. Since by assumption $\pruf(D)$ is quasi-compact, then it is closed in the inverse topology and thus proconstructible, by \cite[Proposition 2.6]{fifolo2}. The inclusions $\prufsloc(D)\subseteq \pruf(D)\subseteq \overic(D)$ and Proposition \ref{semilocal-prufer-dense} imply $\pruf(D)=\overic(D)$, and in particular $\overline{D}\in\pruf(D)$, i.e., $\overline{D}$ is a Pr\"ufer domain.
\end{proof}

We now want to prove a ``Noetherian'' analogue of Proposition \ref{semilocal-prufer-dense}. We start by considering discrete valuation rings; for the notation $\wedge_\Delta$ and the $b$-operation see the following Section \ref{sect:semistar}.
\begin{theorem}\label{DVR-dense}
Let $D$ be a Noetherian domain and let $\Delta(D)$ be the set of all discrete valuation overrings of $D$. Then $\Delta(D)$ is dense in $\Zar(D)^\cons$.
\end{theorem}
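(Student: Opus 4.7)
The plan is to establish density directly: every non-empty basic open set of $\Zar(D)^{\cons}$ should meet $\Delta(D)$. As a first step, since the sets $B(x)\cap\Zar(D) = \{V\in\Zar(D)\mid x\in V\}$, with $x\in K$, form a subbasis of quasi-compact opens for the Zariski topology on $\Zar(D)$, a basis for the constructible topology is given by the sets
\[
U := \{V\in\Zar(D)\mid x_1,\ldots,x_n\in V,\; y_1,\ldots,y_m\notin V\},
\]
for $x_i,y_j\in K$. So the problem reduces to producing a DVR of $K$ inside every non-empty such $U$.

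Pick any $V\in U$. Since $V$ is a valuation domain and $y_j\notin V$, we have $y_j^{-1}\in\gm_V$. I would then form the Noetherian overring
\[
D_1 := D[x_1,\ldots,x_n,y_1^{-1},\ldots,y_m^{-1}]\subseteq V
\]
(which is Noetherian because $D$ is and $D_1$ is a finitely generated $D$-algebra) and the prime ideal $P := \gm_V\cap D_1$, which contains every $y_j^{-1}$. The crucial input is now the classical theorem of Krull that every Noetherian local domain of positive Krull dimension is dominated by a DVR of its quotient field. Applied to the Noetherian local domain $(D_1)_P$, it yields a DVR $W\subseteq K$ with $D_1\subseteq W$ and $P\subseteq\gm_W$. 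This immediately gives $x_i\in D_1\subseteq W$ and $y_j^{-1}\in P\subseteq\gm_W$ (so $y_j\notin W$), so that $W\in\Delta(D)\cap U$, as required.

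The only real subtlety is the edge case $P=(0)$, which forces $\mathrm{Frac}(D_1)=K\subseteq V$, hence $V=K$ and in particular $m=0$; in that case either $D_1\neq K$, so the same argument applied at any height-one prime of $D_1$ produces the desired DVR, or $D_1=K$ and $U=\{K\}$, covered by the convention that the trivial valuation lies in $\Delta(D)$ (it is the unique ``Noetherian valuation overring'' that is a field). The main obstacle, accordingly, is simply the invocation of Krull's dominance theorem; once that black box is accepted, everything else is a routine translation of the constructible-topology combinatorics.
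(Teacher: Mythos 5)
Your argument is correct, and it takes a genuinely different route from the paper's. The paper first shows that $\Delta(D)$ is dense in $\Zar(D)$ for the \emph{inverse} topology, by comparing the semistar operation $\wedge_{\Delta(D)}$ with the $b$-operation on finitely generated ideals (citing Swanson--Huneke and a lemma on spectral semistar operations), and then transfers this inverse-topology density to the constructible topology by observing that each basic constructible-open set $\Omega$ is an open subset of the inverse topology of the quasi-compact subspace $\Zar(D[F])$. You bypass the semistar machinery entirely and produce the required DVR directly inside any nonempty basic constructible-open set: you form the Noetherian overring $D_1 = D[x_1,\ldots,x_n,y_1^{-1},\ldots,y_m^{-1}]$, localize at the center of a chosen valuation, and invoke Krull's dominance theorem that a positive-dimensional Noetherian local domain is dominated by a DVR with the same fraction field. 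This is shorter and more self-contained, while the paper's route fits its overall theme (density via closure operations) and makes the link with the $b$-operation visible. You also explicitly flag the degenerate case where $\{K\}$ is a nonempty basic constructible-open, which occurs precisely when $K$ is a finitely generated $D$-algebra (e.g.\ $D=\mathbb{Z}_{(p)}$); in that case both proofs silently require the convention that the trivial valuation ring $K$ belongs to $\Delta(D)$, and making this explicit is a point in your favor.
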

\begin{proof}
By \cite[Proposition 6.8]{swanson-huneke}, for every finitely generated ideal $I$ of $R$ we have $I^{\wedge_{\Delta(D)}}=I^b$, where $\wedge_{\Delta(D)}$ is the semistar operation induced by $\Delta(D)$ and $b$ is the $b$-operation (or integral closure) on $D$. By \cite[Lemma 5.8(3)]{spectral-semistar}, it follows that $\Zar(D)=\chius^\inverse(\Delta(D))$, i.e., that $\Delta(D)$ is dense in $\Zar(D)$ with respect to the inverse topology.

For every finite subset $F$ of the quotient field $K$ of $D$, let $\B(F):=\Zar(D[F])$ denote (with a small abuse of notation) the generic basic open set of the Zariski topology (induced by that of $\inssubmod_D(K)$). Since $\B(F)\cap\B(G)=\B(F\cup G)$, the open and quasi-compact subspaces of $\Zar(D)$ are precisely all finite unions of basic open sets of the type $\B(F)$, where $F$ is a finite subset of $K$. Since the constructible topology is, by definition, the coarsest topology on $\Zar(D)$ for which open and quasi-compact subspaces of the Zariski topology are clopen, it is easily seen that a basis of $\Zar(R)^\cons$ consists of sets of the type 
\begin{equation*}
\B(F)\cap\left(\Zar(D)\setminus \bigcup_{j=1}^m\B(G_j)\right)=\Zar(D[F])\cap\left(\Zar(D[F])\setminus \bigcup_{j=1}^m\B(G_j)\right),
\end{equation*}
for some finite subsets $F,G_1,\ldots,G_m$ of $K$. Let $\Omega$ be the previous set. Then, $\Omega$ is an open set of the inverse topology of $\Zar(D[F])$; by the first paragraph of the proof, $\emptyset\neq\Delta(D[F])\cap\Omega\subseteq\Delta(D)\cap\Omega$. Therefore, $\Delta(D)$ intersects all basic open sets of $\Zar(D)^\cons$, and thus it is dense in it. The claim is proved. 
\end{proof}

\begin{corollary}
\label{cor:Noeth-dense}
Let $D$ be a Noetherian domain. Then the set of the overrings of $D$ that are Dedekind and semilocal is dense in $\overic(D)$, with respect to the constructible topology. 
\end{corollary}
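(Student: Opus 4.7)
The plan is to combine Theorem~\ref{DVR-dense} with Proposition~\ref{density-locally-with-maximum}\ref{density-locally-with-maximum:dense}, applied in the ambient spectral space $X:=\Over(D)$, which is locally with maximum. Set $Y:=\Zar(D)$ and $Z:=\Delta(D)$. Then $Y$ is proconstructible in $X$, and by the argument already used in the proof of Proposition~\ref{semilocal-prufer-dense}, $Y_f=\prufsloc(D)$ (by \cite[Theorem~22.8]{gilmer}) and $Y_\infty=\overic(D)$ (by \cite[Corollary~5.22]{AM}); in particular $Y_f$ exists. Theorem~\ref{DVR-dense} supplies the density of $Z$ in $Y^\cons$, so the hypotheses of Proposition~\ref{density-locally-with-maximum}\ref{density-locally-with-maximum:dense} are met, and its conclusion is that $Z_f=\Delta(D)_f$ is dense in $Y_\infty=\overic(D)$ with respect to the constructible topology.

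It then remains to identify $\Delta(D)_f$ with the set of overrings of $D$ that are simultaneously semilocal and Dedekind. Since the specialization order on $\Over(D)$ is reverse inclusion, finite suprema in $Y$ are finite intersections of valuation rings. On the one hand, any intersection $R=V_1\cap\cdots\cap V_n$ of finitely many DVR overrings of $D$ is a semilocal Pr\"ufer overring (by \cite[Theorem~22.8]{gilmer}) whose localizations at its maximal ideals are among the $V_i$, hence DVRs; thus $R$ is a one-dimensional Pr\"ufer domain that is Noetherian at every maximal ideal, i.e., a semilocal Dedekind overring of $D$. Conversely, every semilocal Dedekind overring $R$ with maximal ideals $\mathfrak{m}_1,\ldots,\mathfrak{m}_k$ satisfies $R=R_{\mathfrak{m}_1}\cap\cdots\cap R_{\mathfrak{m}_k}$, and each $R_{\mathfrak{m}_i}$ is a DVR overring of $D$; so $R\in\Delta(D)_f$.

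No serious obstacle is anticipated: the statement is essentially the composition of two density mechanisms, namely Theorem~\ref{DVR-dense} inside $\Zar(D)$ together with the ``finite suprema of a dense set are dense in $Y_\infty$'' transport provided by Proposition~\ref{density-locally-with-maximum}, plus the standard structural observation that finite intersections of DVR overrings coincide with semilocal Dedekind overrings.
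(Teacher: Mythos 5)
Your proof is correct and follows essentially the same route as the paper: identify $\Over(D)$ as a spectral space that is locally with maximum, observe that $\Zar(D)$ is proconstructible with $\Zar(D)_\infty=\overic(D)$, invoke Theorem~\ref{DVR-dense} for the density of $\Delta(D)$ in $\Zar(D)^\cons$, and transport this through Proposition~\ref{density-locally-with-maximum}\ref{density-locally-with-maximum:dense} to conclude that $\Delta(D)_f$ is dense in $\overic(D)$. The only cosmetic difference is that the paper justifies the identification $\Delta(D)_f=\Lambda$ (semilocal Dedekind overrings) by a single citation to Matsumura, whereas you unwind the argument via \cite[Theorem~22.8]{gilmer} and the structure of semilocal Pr\"ufer domains; both are sound.
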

\begin{proof}
Let $\Delta:=\Delta(D)$ as in Theorem \ref{DVR-dense} and let $\Lambda $ be the set of the overrings of $D$ that are Dedekind and semilocal. By \cite[Theorem 12.2]{Matsumura}, $\Delta_f=\Lambda$ and, by Proposition \ref{density-locally-with-maximum}\ref{density-locally-with-maximum:dense} and Theorem \ref{DVR-dense}, $\Lambda$ is dense in $\Zar(D)_\infty=\overic(D)$, as claimed.
\end{proof}

\begin{comment}
The following result is a slight generalization of \cite[Proposition 7.6]{ZarNoeth}, with a different proof. It can be seen as an analogue of Proposition \ref{prop:fg-dense} for rings instead of modules.
\begin{Prop}\label{prop:algfg}
Let $A\subseteq B$ be a ring extension, and let $Y$ be the set of all $A$-subalgebras of $B$ that are of finite type over $A$. Then, $Y$ is dense in $R(B|A)$, with respect to the constructible topology. 
% \begin{enumerate}[(a)]
% \item\label{lemma:algfg:alg} The set $\mathcal{F}(D):=\{D[x_1,\ldots,x_n]\mid x_1,\ldots,x_n\in K\}$ is dense in $\Over(D)$, with respect to the constructible topology..
% \item\label{lemma:algfg:ic} The set $\overline{\mathcal{F}}(D):=\{\overline{D[x_1,\ldots,x_n]}\mid x_1,\ldots,x_n\in K\}$ (where $\overline{T}$ indicates the integral closure of $T$) is dense in $\overic(D)$, with respect to the constructible topology..
% \end{enumerate}
\end{Prop}
\begin{proof}
Use Example \ref{algebraic-lattice-ex}, \ref{algebraic-lattice-density} and the obvious equality $Y=R(B|A)_{\rm fin}$. 
\end{proof}
\end{comment}

As a consequence of Corollary \ref{cor:Noeth-dense}, we can complete \cite[Proposition 7.3]{ZarNoeth} by considering the case of principal ideal domains.
\begin{Prop}\label{prop:PID}
Let $D$ be a Noetherian domain, and let $$\Delta:=\{T\in\Over(D)\mid T \mbox{ is a principal ideal domain}\}.$$ Then the following conditions are equivalent:
\begin{enumerate}[\rm (i)]
\item\label{prop:PID:ic} the integral closure of $D$ is a principal ideal domain;
\item\label{prop:PID:cons} $\Delta$ is proconstructible in $\Over(D)$;
\item\label{prop:PID:qc} $\Delta$ is quasi-compact.
\end{enumerate}
\end{Prop}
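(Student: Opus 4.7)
The plan is to adapt the argument used for Corollary \ref{cor:pruf-comp} to the present setting, replacing ``Pr\"ufer'' with ``principal ideal domain'' and Proposition \ref{semilocal-prufer-dense} with Corollary \ref{cor:Noeth-dense}. The implication (ii)$\Longrightarrow$(iii) is immediate, since any proconstructible subspace of a spectral space is quasi-compact.

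For (i)$\Longrightarrow$(ii), I would first observe that every element of $\Delta$, being integrally closed, contains $\overline{D}$; conversely, when $\overline{D}$ is a PID, any $T\in\Over(D)$ with $T\supseteq\overline{D}$ is an overring of the PID $\overline{D}$, and is therefore itself a PID (overrings of a PID in its quotient field are localizations, and localizations of PIDs are PIDs). Hence $\Delta=\{T\in\Over(D):T\supseteq\overline{D}\}=\overic(D)$, and the latter is proconstructible in $\Over(D)$ by the remarks at the beginning of Section \ref{sect:overrings}.

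The main step is (iii)$\Longrightarrow$(i). Since overrings of a PID are PIDs, $\Delta$ is closed under generizations in the specialization order of $\Over(D)$ (which is reverse inclusion); combined with the assumed quasi-compactness, this forces $\Delta$ to be closed in the inverse topology, hence proconstructible, by \cite[Proposition 2.6]{fifolo2}. On the other hand, the set $\Lambda$ of semilocal Dedekind overrings of $D$ is contained in $\Delta$, because every semilocal Dedekind domain is a PID (a classical fact), and by Corollary \ref{cor:Noeth-dense} $\Lambda$ is dense in $\overic(D)^{\cons}$. Since $\Delta$ is proconstructible and $\Delta\supseteq\Lambda$, we get $\Delta\supseteq\chius^{\cons}(\Lambda)=\overic(D)$; in particular $\overline{D}\in\Delta$, i.e., $\overline{D}$ is a PID.

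The only potentially delicate point is the verification that $\Delta$ is closed under generizations, which rests on the two classical ring-theoretic facts recalled above; granted these, everything else is a clean synthesis of the density statement of Corollary \ref{cor:Noeth-dense} with the standard correspondence between quasi-compactness and inverse-closedness for generization-closed subsets of a spectral space.
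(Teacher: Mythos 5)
Your proof is correct and follows essentially the same route as the paper's: both rest on the fact that $\Delta$ is closed under generizations (since overrings of a PID are PIDs), so that quasi-compactness is equivalent to being proconstructible, combined with the density of the semilocal Dedekind overrings $\Lambda$ in $\overic(D)^{\cons}$ from Corollary \ref{cor:Noeth-dense} and the inclusion $\Lambda\subseteq\Delta$. The only difference is organizational --- the paper establishes (ii)$\Leftrightarrow$(iii) first and then (i)$\Leftrightarrow$(ii), while you run a cycle (i)$\Rightarrow$(ii)$\Rightarrow$(iii)$\Rightarrow$(i) --- but the ingredients are identical.
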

\begin{proof}
Note first that \ref{prop:PID:cons} and \ref{prop:PID:qc} are equivalent since, in view of \cite[Corollary 5.3]{Bu-Va},  $\Delta$ is closed under generization (i.e., if $T\subseteq T'$ and $T\in\Delta$, then also $T'\in\Delta$).

If \ref{prop:PID:ic} holds, then $\Delta$ is equal to $I(D)$, which is proconstructible by Proposition \ref{semilocal-prufer-dense}, and so \ref{prop:PID:cons} holds. Suppose \ref{prop:PID:cons} holds: by Corollary \ref{cor:Noeth-dense}, the set $\Lambda$ of overrings of $D$ that are Dedekind and semilocal is dense in $I(D)$; since $\Lambda\subseteq\Delta$ (see, for instance, \cite[Corollary 34.7]{gilmer}), also $\Delta$ is dense in $I(D)$. Since $\Delta$ is proconstructible, we must have $\Delta=I(D)$, and in particular the integral closure $\overline{D}$ of $D$ is in $\Delta$, as claimed.
\end{proof}

An integral domain $D$ is called \emph{rad-colon coherent} if the radical of the conductor $(D:_Dx)$ is the radical of a finitely generated ideal for every $x\in K$ (where $K$ is the quotient field of $D$); likewise, it is called \emph{rad-colon principal} if the radical of each $(D:_Dx)$ is the radical of a principal ideal for every $x\in K$ \cite{localizzazioni}.
\begin{Prop}\label{prop:rcc}
Let $D$ be a rad-colon coherent domain. If $\{P_\alpha\}_{\alpha\in A}$ is a chain of prime ideals of $D$ and $P:=\bigcup_\alpha P_\alpha$, then $\bigcap_\alpha D_{P_\alpha}=D_P$.
\end{Prop}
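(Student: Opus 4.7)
The inclusion $D_P\subseteq\bigcap_\alpha D_{P_\alpha}$ is straightforward, since any $s\notin P$ fails to lie in any $P_\alpha$. For the reverse inclusion, I would take $x\in\bigcap_\alpha D_{P_\alpha}$ and work with the conductor $J:=(D:_Dx)=\{d\in D\mid dx\in D\}$. The key observation is that, for any prime $\gq$ of $D$, one has $x\in D_\gq$ if and only if $J\not\subseteq\gq$ (writing $x=a/s$ with $s\notin\gq$ gives $s\in J\setminus\gq$, and conversely). Thus the hypothesis translates to $J\not\subseteq P_\alpha$ for every $\alpha$, and our goal becomes $J\not\subseteq P$.

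Because $\gq$ is prime, $J\subseteq\gq$ is equivalent to $\sqrt{J}\subseteq\gq$. By the rad-colon coherent hypothesis, there is a finitely generated ideal $I=(a_1,\ldots,a_n)$ of $D$ with $\sqrt{J}=\sqrt{I}$. Hence the hypothesis becomes $I\not\subseteq P_\alpha$ for every $\alpha$, and the conclusion reduces to $I\not\subseteq P$.

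Now I would argue by contradiction: suppose $I\subseteq P=\bigcup_\alpha P_\alpha$. Since $I$ is finitely generated, each generator $a_i$ lies in some $P_{\alpha_i}$; since the $P_\alpha$ form a chain, there exists an index $\beta$ with $P_{\alpha_i}\subseteq P_\beta$ for $i=1,\ldots,n$, and so $I\subseteq P_\beta$, contradicting the preceding paragraph. Therefore $I\not\subseteq P$, hence $\sqrt{J}\not\subseteq P$, hence $J\not\subseteq P$, which means there is some $d\in J\setminus P$, giving $x=(dx)/d\in D_P$.

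I do not foresee a serious obstacle: the argument is essentially the standard ``finite generation reduces unions to containment in one member of a chain'' trick, and the whole point of the rad-colon coherent hypothesis is precisely to make the conductor radical-equivalent to a finitely generated ideal. The only subtlety worth stating explicitly in the write-up is the equivalence $x\in D_\gq\iff J\not\subseteq\gq$ and the fact that containment in a prime is unaffected by passing to radicals.
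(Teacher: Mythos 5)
Your proof is correct, but it takes a genuinely different route from the paper's. The paper's argument is topological: it invokes \cite[Theorem 3.2(b)]{localizzazioni} to say that rad-colon coherence makes the set $X$ of localizations of $D$ at primes proconstructible in $\Over(D)$; then, noting that $\Delta:=\{D_{P_\alpha}\}$ is a chain (hence $\Delta_f=\Delta$) and that suprema in the Zariski topology on $\Over(D)$ are intersections, it applies Theorem~\ref{supremum} to get $\bigcap_\alpha D_{P_\alpha}=\sup\Delta\in\chius^\cons(\Delta)\subseteq X$, and then identifies the resulting localization as $D_P$. Your argument, by contrast, is purely ring-theoretic and self-contained: you translate $x\in D_\gq$ into the conductor condition $(D:_Dx)\not\subseteq\gq$, pass to radicals, replace $\sqrt{(D:_Dx)}$ by the radical of a finitely generated ideal via the rad-colon coherence hypothesis, and then use the standard ``finitely many generators avoid a union of a chain'' trick. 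In effect you are unwinding, in this one special case, exactly the mechanism that makes the set of localizations proconstructible. What the paper's approach buys is that it situates the fact inside the general supremum-plus-constructible-closure framework of Section~\ref{sect:sup} (consistent with the rest of the paper and re-using Theorem~\ref{supremum}); what your approach buys is a short, elementary, dependency-free proof that makes the role of the rad-colon hypothesis completely transparent. Both are valid; yours is arguably cleaner if one only cares about this single statement, while the paper's fits its narrative.

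One small expository point: when you pass from ``$I\not\subseteq P$'' back to ``$J\not\subseteq P$,'' you should say explicitly that this again uses that $P$ is prime (so $J\subseteq P\iff\sqrt J\subseteq P\iff\sqrt I\subseteq P\iff I\subseteq P$); you flag this equivalence for the $P_\alpha$'s but then apply it silently to $P$ itself in the last step.
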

\begin{proof}
Since $D$ is rad-colon coherent, the set $X$ of localizations of $D$ at prime ideals is proconstructible in $\Over(D)$ \cite[Theorem 3.2(b)]{localizzazioni}; in particular, the constructible closure of $\Delta:=\{D_{P_\alpha}\}_{\alpha\in A}$ is contained in $X$. In the Zariski topology, $\sup\Delta$ is exactly the intersection of the elements of $\Delta$; by Theorem \ref{supremum}, it follows that $\sup\Delta\in X$. It follows that $\bigcap_\alpha D_{P_\alpha}=D_P$, as claimed.
\end{proof}

\begin{Prop}
Let $D$ be a rad-colon principal domain. If $\{S_\alpha\}_{\alpha\in A}$ is a descending chain of multiplicatively closed subsets of $D$, and $S:=\bigcap_\alpha S_\alpha$, then $\bigcap_\alpha S_\alpha^{-1}D=S^{-1}D$.
\end{Prop}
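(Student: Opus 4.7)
The plan is to mimic the proof of Proposition~\ref{prop:rcc}, replacing ``prime ideals'' with ``multiplicatively closed subsets'' and using rad-colon principality in place of rad-colon coherence. First, I would set $\Delta:=\{S_\alpha^{-1}D\mid\alpha\in A\}\subseteq\Over(D)$. Since $\{S_\alpha\}_\alpha$ is a descending chain, $\Delta$ is linearly ordered by inclusion, and hence totally ordered under the specialization order of $\Over(D)$ (which is the reverse of inclusion). In particular $\Delta$ is closed under finite suprema, so $\Delta_f=\Delta$, and the supremum of $\Delta$ in the Zariski order is exactly the intersection $\bigcap_\alpha S_\alpha^{-1}D$. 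Theorem~\ref{supremum} then yields $\bigcap_\alpha S_\alpha^{-1}D\in\chius^\cons(\Delta)$.

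Second, I would invoke the analogue, for rad-colon principal domains, of the result \cite[Theorem~3.2(b)]{localizzazioni} used in Proposition~\ref{prop:rcc}: namely, that the subset $\mathcal{M}\subseteq\Over(D)$ consisting of all localizations $T^{-1}D$ (as $T$ ranges over the multiplicatively closed subsets of $D$) is proconstructible in $\Over(D)$. Granting this, since $\Delta\subseteq\mathcal{M}$ and $\mathcal{M}$ is closed in the constructible topology, we obtain $\bigcap_\alpha S_\alpha^{-1}D=T^{-1}D$ for some multiplicatively closed $T\subseteq D$.

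The final step is the identification $T^{-1}D=S^{-1}D$. The inclusion $S^{-1}D\subseteq\bigcap_\alpha S_\alpha^{-1}D$ is immediate from $S\subseteq S_\alpha$. For the reverse, using the criterion $y\in T^{-1}D\iff(D:_Dy)\cap T\neq\emptyset$, I would need to show that for each $x\in\bigcap_\alpha S_\alpha^{-1}D$ the conductor $(D:_Dx)$ actually meets $S$, knowing only that it meets each $S_\alpha$. Here rad-colon principality becomes decisive: writing $\sqrt{(D:_Dx)}=\sqrt{(a)}$ for some $a\in D$, any $b_\alpha\in(D:_Dx)\cap S_\alpha$ satisfies a relation $b_\alpha^{n_\alpha}=ac_\alpha$ with $c_\alpha\in D$, and since $b_\alpha^{n_\alpha}\in S_\alpha$ this forces $a$ to be invertible in every $S_\alpha^{-1}D$. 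A suitable power of $a$ should then lie in $(D:_Dx)\cap S$, giving $x\in S^{-1}D$.

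The hardest part is precisely this last identification. The assignment $T\mapsto T^{-1}D$ is only bijective after passage to saturated multiplicatively closed sets, and this subtlety does not arise in Proposition~\ref{prop:rcc} because complements of primes are automatically saturated. Here one must use the rad-colon principal hypothesis to extract a single witness $a$ that ties the conductor of $x$ to an element genuinely lying in $S=\bigcap_\alpha S_\alpha$, rather than merely in the intersection of the saturations---a careful reduction through saturations that I expect to be the most delicate piece of bookkeeping in the argument.
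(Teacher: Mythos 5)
Your steps 1 and 2---using Theorem~\ref{supremum} together with proconstructibility of the set of quotient rings (this is \cite[Theorem 4.4]{localizzazioni}, the analogue the paper actually cites of \cite[Theorem~3.2(b)]{localizzazioni})---match the paper's proof. The misstep is in step 3: in the paper's argument rad-colon principality is used \emph{only} to invoke that proconstructibility theorem, and plays no further role. Once $\bigcap_\alpha S_\alpha^{-1}D$ is known to be a quotient ring $T^{-1}D$, one may take $T$ to be the saturated multiplicative set $\{d\in D : 1/d\in T^{-1}D\}$; then $T=\{d\in D : 1/d\in S_\alpha^{-1}D\text{ for all }\alpha\}=\bigcap_\alpha S_\alpha^{\mathrm{sat}}$, which equals $\bigcap_\alpha S_\alpha=S$ under the (tacit but necessary) convention that the $S_\alpha$ are saturated. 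This purely order-theoretic finish mirrors the end of Proposition~\ref{prop:rcc} exactly, with saturated multiplicative sets playing the role of the complements $D\setminus P_\alpha$.

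Your proposed direct argument via $\sqrt{(D:_D x)}=\sqrt{(a)}$ cannot close the gap you honestly flag in your last paragraph: it only yields $a\in\bigcap_\alpha S_\alpha^{\mathrm{sat}}$, not $a\in S$, and no hypothesis on $D$ can bridge that, because without saturating the $S_\alpha$ the conclusion is simply false. In $D=\Z$ (which is rad-colon principal), take $S_n:=\{1\}\cup\{2^a : a\geq n\}$: this is a descending chain of multiplicatively closed sets with $S_n^{-1}\Z=\Z[1/2]$ for every $n$, so $\bigcap_n S_n^{-1}\Z=\Z[1/2]$, while $\bigcap_n S_n=\{1\}$ gives $S^{-1}\Z=\Z$. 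So the ``delicate bookkeeping through saturations'' you anticipate is not something rad-colon principality can supply; it is a standing convention on the $S_\alpha$ (equivalently, the statement is about quotient rings), and once granted the final identification needs no further appeal to the rad-colon hypothesis.
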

\begin{proof}
The proof is the same as the previous proposition, using \cite[Theorem 4.4]{localizzazioni} to prove that the set of quotient rings of $D$ is proconstructible.
\end{proof}

We show that the hypothesis in the previous two propositions cannot be dropped in general.

\begin{example}~
\begin{enumerate}
\item Let $V$ be a valuation domain with an unbranched maximal ideal $M$ and such that the residue field is equal to $K(X)$ for some field $K$ and some indeterminate $X$ over $K$. Let $D$ be the pullback of $K$ in $V$: that is, $D:=\{r\in V\mid \pi(r)\in K\}$, where $\pi:V\longrightarrow V/M$ is the residue map. By \cite[Theorem 1.4]{Fo-1980}, the prime spectrum of $D$ is (set-theoretically) equal to the spectrum of $V$; in particular, $M$ is the maximal ideal of $D$. %Furthermore, $D$ is not rad-colon coherent since if $\pi(x)=X$ then the open set  of $\Spec(D)$ consisting of all prime ideals of $D$ not containing $(D:_Dx)$ is $\Spec(D)\setminus\{M\}=\Spec(V)\setminus\{M\}$, and it is not quasi-compact, by Lemma \ref{max-compact} and \cite[Theorem 17.3(e)]{gilmer} (as $M$ is not branched). Thus the radical of $(D:_Dx)$ is not the radical of any finitely generated ideal. 

Let $\{P_\alpha\}$ be the chain of non-maximal prime ideals of $D$: then, for every $\alpha$, we have $D_{P_\alpha}=V_{P_\alpha}$, and thus $\bigcap_\alpha D_{P_\alpha}=V$. On the other hand, $M=\bigcup_\alpha D_{P_\alpha}$, and $D_M=D$; thus, $\bigcap_\alpha D_{P_\alpha}\neq D_M$. In particular, $D$ is not rad-colon coherent.

\item Let $D$ be a Dedekind domain with countably many maximal ideals, say $M_0,M_1,\ldots,M_n,\ldots$ and $M_\infty$, and suppose that $M_\infty\subseteq\bigcup_nM_n$: that is, suppose that $M_\infty$ is not the radical of any principal ideal, or equivalently that the class of $M_\infty$ in the Picard group of $D$ is not torsion. In particular, $D$ is not rad-colon principal.

Let $S_n:=D\setminus(M_1\cup\cdots\cup M_n)$: then, $\{S_n\}_{n\in\mathbb{N}}$ is a descending chain of multiplicatively closed subset whose intersection $S=D\setminus\bigcup_nM_n$ is just the set of units of $D$. Hence, $S_n^{-1}D=D_{M_1}\cap\cdots\cap D_{M_n}$, and thus $T:=\bigcap_nS_n^{-1}D=\bigcap_nD_{M_n}$. Since $D$ is a Dedekind domain, the maximal ideals of $T$ are in the form $M_nT$, for $n\in\mathbb{N}$; in particular, $M_\infty T=T$. On the other hand, $S^{-1}D=D$; in particular, $\bigcap_nS_n^{-1}D\neq S^{-1}D$.
\end{enumerate}
\end{example}

\subsection{Spaces of semistar operations.}\label{sect:semistar}
Let $D$ be an integral domain with quotient field $K$. As before, let $\mathcal F (D):=\inssubmod_D(K)$ denote the set of all $D$-submodules of $K$. A \emph{semistar operation} on $D$ is a closure operation $\star$ on $\mathcal{F}(D)$ such that $(kF)^\star=kF^\star$ for all $k\in K$ and all $F\in\mathcal{F}(D)$. As $\mathcal{F}(D)$ is an algebraic lattice of sets, keeping in mind Remark \ref{oss:closL} all the definition and results of Section \ref{sect:alglatt} apply to semistar operations. 

We denote by $\inssemistar(D)$ and $\inssemistartf(D)$, respectively, the set of all semistar operations and the set of all semistar operations of finite type. For any $\star\in \inssemistar(D)$, let $\mathcal F (D)^\star:=\{I\in \mathcal F (D)\mid I=I^\star\}$.  By Corollary \ref{cor:closL}, $\inssemistartf(D)$ is proconstructible in $\insclos_f(K)$, and thus it is a spectral space when endowed with the restriction of the Zariski topology of $\insclos_f(K)$. Since $x\in I^\star$ if and only if $1\in(x^{-1}I)^\star$, it follows that it is enough to consider the sets $V(I,x)$ with $x=1$, i.e., a subbasis for the Zariski topology on $\inssemistartf(D)$ is formed by the sets
\begin{equation*}
V_F:=V(F,1)=\{\star\in \SStar_f(D)\mid 1\in F^\star \},
\end{equation*}
as $F$ ranges among the $D$-submodules of $K$. (The spectrality of this set was also proved through ultrafilter techniques in \cite[Theorem 2.13]{FiSp}.) We can also consider this topology on the whole $\inssemistar(D)$, but it this case we may not obtain a spectral space \cite[Section 4]{non-ft}. 

For every overring $R$ of $D$, the map $\star_R$ defined by setting $F^{\star_R}:=FR$, for every $F\in \mathcal F (D)$, is a semistar operation of finite type. if $\Delta\subseteq\Over(D)$, then we set $\wedge_\Delta:=\bigwedge(\{\star_R\mid R\in \Delta\})$, and in particular we set $b:=\wedge_{\Zar(D)}$.

There is a natural topological embedding $\iota:\Over(D)\longrightarrow\inssemistartf(D)$, defined by $\iota(R):=\star_R$. (This is the reason why the topology is called the Zariski topology.) We will use our results to show that this map is, in general, not spectral.

\begin{Prop}\label{prop:controes-semistar}
Let $D$ be an integral domain, and let $\iota:\Over(D)\longrightarrow\inssemistartf(D)$ the canonical embedding. If $\iota$ is a spectral map (equivalently, if $\iota(\Over(D))$ is closed in the constructible topology of $\inssemistartf(D)$) then the integral closure $\overline{D}$ of $D$ is a Pr\"ufer domain.
\end{Prop}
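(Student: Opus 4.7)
The plan is to locate the finite-type $b$-operation $b_f$ (where $b := \wedge_{\Zar(D)}$, so $F^b = \bigcap_{V\in\Zar(D)}FV$) in the constructible closure of $\iota(\Over(D))$, thereby forcing, under the hypothesis, $b_f = \star_R$ for some overring $R$; I will then identify $R$ with $\overline{D}$ and derive the Pr\"ufer property. Since $\iota$ is already known to be a topological embedding, the parenthetical equivalence is standard: $\iota$ is spectral iff $\iota(\Over(D))$ is retrocompact iff (by Proposition \ref{pro-retro-spec}) proconstructible.

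I would first pin down the specialization order $\leq$ on $\inssemistartf(D)$: each subbasic open set $V_F=\{\star: 1\in F^\star\}$ is up-closed under the semistar ordering $\preceq$ (where $\sigma\preceq\tau$ iff $G^\sigma\subseteq G^\tau$ for all $G$), so $\leq$ is the reverse of $\preceq$. Consequently, for any $\Delta\subseteq\Zar(D)$, the supremum of $\{\star_V:V\in\Delta\}$ in $\inssemistartf(D)$ is the largest $\preceq$-lower bound of finite type, namely $(\wedge_\Delta)_f$. Setting $Y:=\iota(\Zar(D))$, I would compute $Y_f$ by observing that for finitely many valuation overrings $V_1,\dots,V_n$ of $D$ the intersection $R:=V_1\cap\cdots\cap V_n$ is a semilocal Pr\"ufer overring whose maximal localizations are the $V_i$ (\cite[Theorem 22.8]{gilmer}); Pr\"ufer distributivity then yields $\bigcap_i FV_i = FR$ for every finitely generated $F\in\mathcal{F}(D)$, so the two finite-type semistar operations $(\wedge_{\{V_1,\dots,V_n\}})_f$ and $\star_R$ agree on finitely generated modules, hence everywhere. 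Thus $Y_f$ exists and $Y_f\subseteq\iota(\prufsloc(D))\subseteq\iota(\Over(D))$.

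By Theorem \ref{supremum}\ref{supremum:sup}, $Y_\infty$ exists and $Y_\infty\subseteq\chius^\cons(Y_f)\subseteq\chius^\cons(\iota(\Over(D)))$. Taking $\Delta=\Zar(D)$ gives $b_f=(\wedge_{\Zar(D)})_f\in Y_\infty$, so $b_f$ lies in the constructible closure of $\iota(\Over(D))$. Under the hypothesis this closure is $\iota(\Over(D))$ itself, so $b_f=\star_R$ for some overring $R$ of $D$; evaluating at $F=D$ yields $R=D^{b_f}=D^b=\overline{D}$ by Krull's theorem. Thus $\star_{\overline{D}}=b_f$, i.e., $F\overline{D}=F^b=\bigcap_{V\in\Zar(D)}FV$ for every finitely generated $F\in\mathcal{F}(D)$. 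Since every finitely generated fractional ideal of $\overline{D}$ has the form $F\overline{D}$ for some such $F$, and $\Zar(D)=\Zar(\overline{D})$, every finitely generated ideal of $\overline{D}$ is integrally closed; as $\overline{D}$ is already integrally closed, this classical criterion forces $\overline{D}$ to be a Pr\"ufer domain.

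The main obstacle is justifying the identification $(\wedge_{\{V_1,\dots,V_n\}})_f=\star_{V_1\cap\cdots\cap V_n}$ inside $\inssemistartf(D)$: it needs both the nontrivial structure theorem that a finite intersection of valuation overrings of a common field is semilocal Pr\"ufer with the given localizations, and the Pr\"ufer distributivity of finitely generated ideals over intersections of localizations. Without this, controlling $Y_f$ and obtaining the crucial inclusion $Y_f\subseteq\iota(\Over(D))$ would be out of reach.
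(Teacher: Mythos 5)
Your proof is correct and follows essentially the same path as the paper: identify finite suprema $\sup\{\star_{V_1},\dots,\star_{V_n}\}=\star_{V_1\cap\cdots\cap V_n}$ via the semilocal Pr\"ufer structure of finite intersections of valuation overrings, apply Theorem \ref{supremum} to conclude that the $b$-operation lies in $\chius^\cons(\iota(\Over(D)))$, then force $\star_R=b$ for an overring $R$, identify $R=\overline{D}$ by evaluating at $D$, and invoke the classical characterization of Pr\"ufer domains as integrally closed domains on which $b$ is the identity. The only cosmetic differences are that you take $Y=\iota(\Zar(D))$ (so $Y_f=\iota(\prufsloc(D))$) while the paper starts directly from $\iota(\prufsloc(D))$, and that you carefully distinguish $b_f$ from $b$—a distinction that is vacuous here since $\Zar(D)$ is quasi-compact and hence $b$ is already of finite type.
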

\begin{proof}
Let $\Delta$ be the set of semilocal Pr\"ufer overrings of $D$.
We first prove that if $T\in\Delta$, then $\iota(T):=\star_T$ is the infimum of a finite family of elements of $\iota(\Zar(D))$: indeed, if $I$ is any $D$-submodule of the quotient field $K$ of $D$ and $V_1,\ldots,V_n$ are the localizations at the maximal ideals of $T$,  then, keeping in mind \cite[Theorem 4.10]{gilmer}, we have 
\begin{equation*}
I^{\star_T}=IT=IT\left(\bigcap_{i=1}^nV_i\right)=\bigcap_{i=1}^nITV_i=\bigcap_{i=1}^nI^{\star_{V_i}},
\end{equation*}
i.e., $\star_T=\inf_{1\leq i\leq n}\star_{V_i}$. On the other hand, if $V_1,\ldots,V_n\in\Zar(D)$, then $T:=V_1\cap\cdots\cap V_n$ is a Pr\"ufer domain whose localizations at the maximal ideals are a subset of $\{V_1,\ldots,V_n\}$, and so $\inf_{1\leq i\leq n}\star_{V_i}=\star_T$. Therefore, the set $\iota(\Delta)$ is closed by finite infima (that is, it is closed by finite suprema, with respect to the Zariski topology).

Suppose $\iota(\Over(D))$ is proconstructible in $\inssemistartf(D)$. By Theorem \ref{supremum}, it follows that $$\iota(\Delta)_\infty\subseteq \iota(\Over(D)),
$$
and thus, in particular, $b=\wedge_{\Zar(D)}\in \iota(\Over(D))$. Let $R\in \Over(D)$ be such that $b=\iota(R)=\star_R$. Then, $D^b=D^{\star_R}=DR=R$; however, $D^b$ is equal to the integral closure $\overline{D}$ of $D$ \cite[Corollary 5.22]{AM}, and thus $R=\overline{D}$. Hence, for every $\overline{D}$-submodule $J$ of $K$, we have $J^b=J^{\star_{\overline{D}}}=J\overline{D}=J$. This proves that $b$ is the identity on $\mathcal F(D)$, and this happens if and only if $\overline{D}$ is a Pr\"ufer domain, by \cite[Theorem 24.7]{gilmer}. The conclusion follows. 
\end{proof}

By \cite[Proposition 2.7]{FiSp}, if $\mathcal{S}$ is a quasi-compact set of semistar operations then $\bigwedge(\mathcal{S})$ is a semistar operation of finite type; the converse holds if $\mathcal{S}$ is a set of closures in the form $\star_R$ if the rings $R$ are either all localizations of $D$ \cite[Corollary 4.4]{FiSp} or valuation overrings of $D$ \cite[Proposition 4.5]{FiSp}. In \cite{FiSp}, it was also conjectured that the converse is valid for every family of semistar operations induced by overrings; while this conjecture has already been disproved in \cite[Example 3.6]{graz} using numerical semigroup rings, we now show that it can fail on every non-Pr\"ufer domain.
\begin{example}
Let $D$ be an integrally closed domain that is not a Pr\"ufer domain and, as before, let $b$ be the semistar operation $b:I\mapsto\bigcap\{IV\mid V\in\Zar(D)\}$, for every $I\in \mathcal{F}(D)$. Note that $b$ is a semistar operation of finite type (for example, since $\Zar(D)$ is quasi-compact). However, if $T$ is a Pr\"ufer overring of $D$ and $I$ is any $D$-submodule of the quotient field of $D$, then, again by \cite[Theorem 4.10]{gilmer}, we have
\begin{equation*}
IT=\bigcap_{P\in\Spec(T)}IT_M=\bigcap_{V\in\Zar(T)}IV,
\end{equation*}
and thus $b=\bigwedge(\mathcal{S})$, where $\mathcal{S}:=\{\star_R\mid R\in\pruf(D)\}$. In particular, $\bigwedge(\mathcal{S})$ is of finite type, while $\mathcal{S}$ is not quasi-compact since it is homeomorphic to $\pruf(D)$ \cite[Proposition 2.5]{FiSp} and $\pruf(D)$ is not quasi-compact by Corollary \ref{cor:pruf-comp}.
\end{example}
\noindent
{\bf Acknowledments.}  The authors would like to express their sincere gratitude to the referee, whose remarkable suggestions helped to significantly improve  the paper.
\bibliographystyle{plain}
\bibliography{biblio}

\end{document}